\documentclass[11pt]{article}

\usepackage[T1]{fontenc}
\usepackage[utf8]{inputenc}
\usepackage{lmodern}
\usepackage{microtype}
\usepackage{amsmath,amssymb,amsthm,mathtools,enumerate}
\usepackage{enumitem}
\usepackage{geometry}
\usepackage{hyperref}
\usepackage{aliascnt}
\usepackage[nameinlink,noabbrev]{cleveref}

\hypersetup{
  colorlinks=true,
  linkcolor=black,
  citecolor=black,
  urlcolor=black,
  pdftitle={Gallai Decomposition of Ordered Groups: Subgroups, Quotients, and the N-free Case},
  pdfauthor={Imed Zaguia}
}

\newtheorem{theorem}{Theorem}[section]
\numberwithin{equation}{section}
\newaliascnt{lemma}{theorem}
\newtheorem{lemma}[lemma]{Lemma}
\aliascntresetthe{lemma}
\newaliascnt{proposition}{theorem}
\newtheorem{proposition}[proposition]{Proposition}
\aliascntresetthe{proposition}
\newaliascnt{corollary}{theorem}
\newtheorem{corollary}[corollary]{Corollary}
\aliascntresetthe{corollary}
\theoremstyle{definition}
\newaliascnt{definition}{theorem}
\newtheorem{definition}[definition]{Definition}
\aliascntresetthe{definition}
\newaliascnt{example}{theorem}
\newtheorem{example}[example]{Example}
\aliascntresetthe{example}
\theoremstyle{remark}
\newaliascnt{remark}{theorem}

\aliascntresetthe{remark}
\newaliascnt{observation}{theorem}

\aliascntresetthe{observation}

\crefname{theorem}{Theorem}{Theorems}
\crefname{lemma}{Lemma}{Lemmas}
\crefname{proposition}{Proposition}{Propositions}
\crefname{corollary}{Corollary}{Corollaries}
\crefname{definition}{Definition}{Definitions}
\crefname{example}{Example}{Examples}
\crefname{remark}{Remark}{Remarks}
\crefname{observation}{Observation}{Observations}

\newcommand{\e}{\mathsf e}
\newcommand{\inc}{\operatorname{inc}}
\newcommand{\Comp}{\operatorname{Comp}}
\newcommand{\Inc}{\operatorname{Inc}}
\newcommand{\supp}{\operatorname{supp}}
\newcommand{\width}{\operatorname{width}}
\newcommand{\Ctype}{\mathbf C}
\newcommand{\Atype}{\mathbf A}
\newcommand{\cV}{\mathcal V}
\newcommand{\cH}{\mathcal H}

\title{Gallai Decomposition of Ordered Groups: Subgroups, Quotients, and the \(N\)-free Case}
\author{%
Imed Zaguia\thanks{The author gratefully acknowledges support from the Canadian Defence Academy.}\\
Department of Mathematics \& Computer Science\\
Royal Military College of Canada\\
P.O.Box 17000, Station Forces\\
Kingston, Ontario, Canada\\
e-mail: zaguia@rmc.ca%
}
\date{\today}

\begin{document}
\maketitle

\begin{abstract}
We study Gallai decomposition for groups equipped with two-sided invariant
partial orders.  The key algebraic step extends to arbitrary binary relations
compatible with the group operation: if all left and right translations
preserve a binary relation \(\rho\), then every least strong module
\(S_\rho(\e,g)\), \(g\ne\e\), is a subgroup.  For a partial order this
subgroup is convex.  Thus the robust modules through the identity of an
ordered group form a canonical chain of convex subgroups, with each
canonical factor \(H/H^-\) prime, totally ordered, or equality-ordered.  We
characterize exactly the subgroups that are modules, show that they form a
complete sublattice of the subgroup lattice, establish overlap and inheritance
results for arbitrary subgroups, and prove compatibility with quotients by
normal strong subgroups.

For \(N\)-free ordered groups the prime factors disappear.  Using the
robust-module decomposition of cographs, we characterize all two-sided
invariant \(N\)-free partial orders by reduced admissible two-coloured subgroup
chains, with totally ordered and equality-ordered canonical factors; the order
is determined by the first nontrivial factor of each element.  We also
determine how the canonical decomposition restricts to arbitrary subgroups,
characterize finite width and prove width divisibility for subgroups, and show
that every reduced two-coloured chain is realized by an \(N\)-free ordered
abelian group whose canonical factors are isomorphic to \(\mathbb Z\).
\end{abstract}

\medskip
\noindent\textbf{2020 Mathematics Subject Classification.}
Primary 06F15; Secondary 06A07, 05C75, 20F60.


\smallskip
\noindent\textbf{Keywords.} partially ordered group; Gallai decomposition; compatible binary relation; module; Cayley graph; \(N\)-free poset; cograph.

\section{Introduction}

Modular decomposition is a basic structural tool for partially ordered sets,
comparability graphs, and more general binary structures
\cite{Gallai1967,Kelly1985,EhrenfeuchtRozenbergII,HarjuRozenberg1994,
ille-woodrow-2009,CourcelleDelhomme2008,BoudabbousDelhomme2012}.
For a binary relation \(\rho\) on a set \(X\), a subset
\(M\subseteq X\) is a \emph{module} if, for every
\(x\in X\setminus M\) and every \(y,z\in M\),
\[
 x\rho y \iff x\rho z
 \qquad\text{and}\qquad
 y\rho x \iff z\rho x.
\]
The empty set, the singletons of \(X\), and \(X\) itself are modules,
called \emph{trivial}.  The structure is \emph{prime} if it has no
nontrivial module.  A module is \emph{strong} if it overlaps no other
module.

For \(x,y\in X\), let \(S_\rho(x,y)\)
denote the least strong module containing \(x\) and \(y\).  These
\emph{robust} modules are precisely the pair-generated nodes of the
strong-module hierarchy; together with their quotient structures, they
encode the modular decomposition
\cite{CourcelleDelhomme2008,BoudabbousDelhomme2012}.  Thus
\(S_\rho(x,y)\) are canonical, rather than auxiliary, objects of the
decomposition.

For arbitrary binary relations, Gallai decomposition expresses each
nonsingleton robust module as a lexicographical sum of its maximal proper
strong submodules, and the corresponding quotient has no nontrivial strong
modules; hence it is prime, edge-free, complete, or a total order.
Specializing to posets, the quotient is therefore prime, an antichain, or
a total order.  When the relation is clear, we write simply \(S(x,y)\).

We determine how this decomposition interacts with a group operation.  An
\emph{ordered group} is a group \(G\) equipped with a partial
order invariant under multiplication on both sides; see, for example,
\cite{Fuchs1963,Glass1999}.  We write the group multiplicatively and denote
its identity by \(\e\).  Left and right translations are order
automorphisms, while inversion is an order anti-automorphism.  Module
subsets and lexicographical quotient decompositions have already proved
useful in the study of ordered groups \cite{PouzetZaguia2019}.  Our focus is
the \emph{canonical Gallai decomposition} of the underlying
ordered set: how the group symmetries constrain it, and, once the prime
Gallai alternative is excluded, which chains of chain and antichain factors
can arise from a two-sided invariant group order.

The canonical Gallai objects acquire algebraic structure by an argument not
specific to partial orders.  In \cref{thm:robust-subgroup} we prove that if a
group carries a binary
relation \(\rho\) preserved by all left and right translations, then \(S_\rho(\e,g)\)
is a subgroup for every \(g\ne\e\).  Thus the pair-generated nodes on the
identity branch of the ordinary modular decomposition are themselves
subgroups.  Specializing to a partial order, they are convex subgroups.

There are related subgroup phenomena in the literature on Cayley graphs
and digraphs.  Peng and Wang characterize nontrivial lexicographic
decompositions of finite Cayley digraphs by a double-coset condition on a
subgroup \cite{PengWang2007}.  Barber develops the corresponding
subgroup--coset viewpoint for wreath decompositions of Cayley digraphs
and, more generally, double-coset digraphs \cite{Barber2021}.  More
recently, Chudnovsky et al.\ study modules in finite Cayley graphs and show,
in particular, that when the graph is connected and co-connected, a maximal
module containing the identity is a subgroup \cite{ChudnovskyEtAl2026}.

Our point of departure is instead the canonical modular decomposition: the
intrinsically defined least strong modules \(S_\rho(\e,g)\), rather than
a prescribed block system, a chosen homogeneous set, or the prior existence
of a lexicographic or wreath decomposition.  The resulting subgroups are
therefore canonical nodes of the modular decomposition.

For a Cayley relation
\[
   x\,\rho_D\,y
   \quad\Longleftrightarrow\quad
   x^{-1}y\in D,
\]
two-sided compatibility is equivalent to conjugation invariance of
\(D\).  Hence the relation-theoretic result applies, in particular, to
conjugation-invariant Cayley graphs and digraphs and to every Cayley
relation on an abelian group.

For an ordered group, put \(\cV(G):=\{S(\e,g):g\ne\e\},\)
and, for \(H\in\cV(G)\), \(H^-:=\{\e\}\cup\bigcup\{K\in\cV(G):K<H\}.\)
The robust modules through \(\e\) form one branch of the modular hierarchy,
and the group operation turns this branch into a canonical subgroup chain.

\begin{theorem}[Gallai decomposition of ordered groups]
\label{thm:intro-gallai}
Let \(G\) be an ordered group.
\begin{enumerate}[label=\textup{(\alph*)}]
\item
Every strong module of \(G\) containing \(\e\) is a convex subgroup of \(G\),
and \(\cV(G)\) is a chain under inclusion.  More precisely, the strong
modules containing \(\e\) are exactly
\[
 H_I:=\{\e\}\cup\bigcup_{H\in I}H,
\]
where \(I\) is an initial segment of \(\cV(G)\).  Consequently every strong
module of \(G\) is a left coset of such a convex subgroup.

\item
For every \(H\in\cV(G)\), the subgroup \(H^-\) is proper, convex and normal
in \(H\).  Its cosets are the maximal proper strong submodules of \(H\), and
the quotient ordered group
\[
 Q_H:=H/H^-
\]
is prime, totally ordered, or equality-ordered.

\item
If \(g\ne\e\) and \(H=S(\e,g)\), then the comparison of \(g\) with \(\e\)
in \(G\) is determined by \(gH^-\) in \(Q_H\).  Hence the order relation on
\(G\) is determined by the canonical chain together with its ordered Gallai
quotients, and conjugation preserves the canonical decomposition of \(G\).
\end{enumerate}
\end{theorem}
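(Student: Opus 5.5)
We will derive \cref{thm:intro-gallai} from the relation-theoretic result announced in the introduction (\cref{thm:robust-subgroup}), together with the standard facts about strong modules. These facts are: strong modules form a tree under inclusion; every strong module is a union of robust modules $S(x,y)$; the maximal proper strong submodules of a robust module partition it; and the Gallai quotient has no nontrivial strong modules. Translations are automorphisms, so $aM$, $Ma$ and $aMa^{-1}$ are strong modules whenever $M$ is. Inversion is an anti-automorphism, and the dual order has the same modules, so $M^{-1}$ is a strong module as well.

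\emph{Step 1 (part (a)).} By \cref{thm:robust-subgroup}, each $S(\e,g)$ is a subgroup. It is convex because a module of a poset that contains $x<z<y$ with $z\notin M$ would give $z<y$ but $z\not<x$, which is impossible. Two members of $\cV(G)$ are strong and share $\e$, so they are nested, and $\cV(G)$ is a chain. Now let $M$ be a strong module with $\e\in M$ and $|M|>1$. Then $M=\bigcup_{g\in M\setminus\{\e\}}S(\e,g)$, since $S(\e,g)\subseteq M$ by minimality. The set $I=\{S(\e,g):g\in M\}$ is an initial segment: if $K=S(\e,h)\subseteq S(\e,g)\subseteq M$, then $h\in M$. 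Hence $M=H_I$, which is a subgroup as the union of a chain of subgroups, and it is convex. For the converse, I must check that each $H_I$ is a strong module. A union of a chain of strong modules is a module, and it overlaps no strong module $N$: if it did, $N$ would meet some $S(\e,g)$ in the chain without containing it, so $N\subseteq S(\e,g)$, which forces $N\subseteq H_I$. Strong modules of posets do not overlap arbitrary modules, so I will verify that case using the tree structure, with overlap-closure for modules and a standard argument. Finally, a general strong module $M\ni x$ gives $x^{-1}M\ni\e$, which is strong, so $M$ is a left coset.

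\emph{Step 2 (part (b)).} $H^-=H_{I}$ with $I=\{K\in\cV: K\subsetneq H\}$, so by Step 1 it is a convex subgroup and a strong module. It is proper: if $H=S(\e,g)$ were a union of smaller $K$, then $g$ would lie in some $K\subsetneq H$, contradicting minimality. Normality in $H$ follows because $hH^-h^{-1}$ is a strong module containing $\e$ and contained in $H$, hence of the form $H_J$ with $J$ inside $\cV$ restricted to $H$. Conjugation by $h$ permutes the chain and fixes $H=S(\e,g)$ setwise (it is a subgroup containing $h$), so it preserves the set of members strictly below $H$. The left cosets $hH^-$, for $h\in H$, are strong modules. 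Each is maximal proper among strong submodules of $H$: any strong $M\subsetneq H$ containing $h$ has $h^{-1}M$ strong and containing $\e$, hence equal to $H_J\subsetneq H$, hence contained in $H^-$. The Gallai quotient of $H$ is therefore $H/H^-$, with the induced order, which is a group order by two-sided invariance and normality. Gallai's trichotomy then says this quotient is prime, a total order or an antichain, and an antichain means the equality order.

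\emph{Step 3 (part (c)).} Because the cosets $H^-$ and $gH^-$ are distinct modules, the relation between $\e$ and $g$ equals the relation between their classes in $Q_H$. Every pair $x,y$ reduces to the pair $\e,x^{-1}y$ by left translation, which gives determination of the order. Conjugation is an automorphism fixing $\e$, so it maps $S(\e,g)$ to $S(\e,cgc^{-1})$, permutes $\cV$ preserving inclusion, and hence preserves the decomposition.

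The main obstacle is the converse in Step 1: showing that every $H_I$, including those with no maximum, is genuinely strong. I would first try to prove that a module overlapping $H_I$ overlaps some $S(\e,g)$ in the chain or contains all of them, and then use the decomposition of the parent robust module to rule the latter case out.
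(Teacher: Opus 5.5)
Your argument follows the paper's proof essentially step by step. You use \cref{thm:robust-subgroup} for the subgroup property and comparability of strong modules through \(\e\) for the chain. The decomposition \(M=\bigcup_{g\in M\setminus\{\e\}}S(\e,g)\) gives \(M=H_I\), the cosets \(hH^-\) are identified with the Gallai blocks of \(H\), Gallai's trichotomy handles \(Q_H\), and uniformity between distinct blocks gives (c). Your local variants are fine. You get normality of \(H^-\) by letting conjugation by \(h\in H\) permute the members of \(\cV(G)\) below \(H\), and maximality of \(hH^-\) by translating a strong submodule back to \(\e\).

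The one step you leave open is that every \(H_I\) is strong, including when \(I\) has no largest member. This is not an obstacle, and your own argument already settles it, because you never used that \(N\) is strong. Suppose a module \(N\) overlaps \(H_I\); the case \(I=\varnothing\), where \(H_\varnothing=\{\e\}\), is trivial. Pick \(a\in N\cap H_I\) and \(b\in H_I\setminus N\), and take a member \(K\in I\) containing both, which exists because \(I\) is a chain. Then \(N\) meets \(K\) but does not contain it. Since \(K\) is strong, it is comparable with or disjoint from \emph{every} module, not only strong ones. Hence \(N\subseteq K\subseteq H_I\), contradicting the overlap. This is the standard fact, recalled in Section~\ref{sec:prelim}, that a nonempty directed union of strong modules is strong, and it is exactly what the paper invokes at this point.

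You should drop your fallback plan. A module containing all members of \(I\) contains \(H_I\), so it does not overlap it and there is no ``latter case'' to rule out. Moreover, a ``parent robust module'' of \(H_I\) need not exist. The robust modules properly containing \(H_I\) are exactly the members of \(\cV(G)\setminus I\), and this set may be empty or have no least element.
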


The decomposition is also stable under the natural quotient operation.  If
\(K\triangleleft G\) is a proper strong subgroup containing \(\e\), then
\cref{cor:quotient-compatibility} gives
\[
 \cV(G/K)=\{H/K:H\in\cV(G),\ K<H\},
 \qquad
 S_{G/K}(K,gK)=S_G(\e,g)/K,
\]
and each canonical factor above \(K\) is unchanged up to its natural ordered
group isomorphism.

The canonical chain also constrains arbitrary subgroups.  If \(K\leq G\),
then the intersections \(K\cap H\), \(H\in\cV(G)\), form a chain of convex
subgroup-modules of the induced order on \(K\), and each nontrivial factor \(\frac{K\cap H}{K\cap H^-}\)
embeds as an ordered subgroup of \(Q_H\).  More generally, if \(H\) is a
subgroup which is a module of \(G\) and \(K\leq G\) overlaps \(H\), then
\(L=H\cap K\) is a convex subgroup-module of \(K\) and
\[
 K\cap kH=kL,\qquad K\cap Hk=Lk\qquad(k\in K).
\]
Thus a subgroup crossing a modular layer meets every coset that it encounters
in a translate of one fixed subgroup.

This leads to an exact description of the subgroup-modules.  The local
picture also explains where their lattice can branch: prime factors admit no
proper nontrivial subgroup-module, total factors admit only convex ones,
while an equality-ordered factor admits every subgroup.  We write
\(\operatorname{Sub}(Q)\) for the subgroup lattice of \(Q\) and
\(\operatorname{ConvSub}(Q)\) for its convex-subgroup lattice when \(Q\) is
ordered.

\begin{theorem}[Subgroup-modules]
\label{thm:intro-subgroup-modules}
Let \(G\) be an ordered group and let \(K\leq G\).  Then \(K\) is a
module of \(G\) if and only if either
\begin{enumerate}[label=\textup{(\alph*)}]
\item \(K\) is the strong subgroup determined by an initial segment of
\(\cV(G)\); or
\item there is a unique \(H\in\cV(G)\) such that
\[
 H^-<K<H
\]
and \(K/H^-\) is a subgroup-module of \(Q_H=H/H^-\).
\end{enumerate}
Let
\[
 \mathsf{MSub}(G):=
 \{K\leq G:K\text{ is a module of the underlying ordered set}\},
\]
ordered by inclusion.  Consequently, for every \(H\in\cV(G)\),
\[
 [H^-,H]_{\mathsf{MSub}(G)}\cong
 \begin{cases}
  \mathbf 2,&Q_H\text{ prime},\\
  \operatorname{ConvSub}(Q_H),&Q_H\text{ totally ordered},\\
  \operatorname{Sub}(Q_H),&Q_H\text{ equality-ordered}.
 \end{cases}
\]
Moreover, \(\mathsf{MSub}(G)\) is a complete sublattice of
\(\operatorname{Sub}(G)\).
\end{theorem}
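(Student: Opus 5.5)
The plan is to take \cref{thm:intro-gallai} as given and split on how the subgroup module \(K\) sits relative to the canonical chain \(\cV(G)\). First, the converse direction. In case (a), \(K=H_I\) is a strong module by \cref{thm:intro-gallai}(a). In case (b), we use the fact that modules are transitive through a module: if \(H\) is a module of \(G\) and \(M\subseteq H\) is a module of \(H\), then \(M\) is a module of \(G\). It therefore suffices to show that \(K\) is a module of \(H\). The cosets of \(H^-\) are modules of \(H\), and the order on \(H\) is the lexicographic sum over \(Q_H\) (\cref{thm:intro-gallai}(b)). Hence a union of \(H^-\)-cosets is a module of \(H\) exactly when its image is a module of \(Q_H\), and \(K/H^-\) is such a module by hypothesis.

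For the forward direction, let \(K\ne\{\e\}\) be a module and subgroup, and suppose \(K\) is not strong. Choose a nonidentity \(g\in K\). Then \(S(\e,g)\) meets \(K\), and strong modules do not overlap modules, so either \(S(\e,g)\subseteq K\) or \(K\subsetneq S(\e,g)\). Set \(I=\{H\in\cV(G):H\subseteq K\}\), which is an initial segment, so \(H_I\subseteq K\). If \(K=H_I\) we are in (a). Otherwise pick \(g\in K\setminus H_I\). Then \(H:=S(\e,g)\not\subseteq K\), which forces \(K\subsetneq H\). This \(H\) is the least member of \(\cV(G)\) not contained in \(K\), since every \(S(\e,k)\) with \(k\in K\) is either inside \(K\) or contains \(K\). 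It follows that \(H^-=H_I\subseteq K\subsetneq H\), and \(H^-\ne K\) because \(g\notin H^-\). Uniqueness of \(H\) is then immediate, since the intervals \((H^-,H)\) are disjoint along the chain. Finally, \(K\) is a union of \(H^-\)-cosets because \(H^-\le K\). A module of \(G\) inside \(H\) is a module of \(H\), so by the lexicographic description \(K/H^-\) is a module of \(Q_H\). This step, the identification \(H^-=H_I\), is the main obstacle. It needs \(H^-\) to be exactly the union of the canonical subgroups strictly below \(H\), which is the definition, together with the chain property of \(\cV(G)\).

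For the interval isomorphism, the equivalence just proved gives \([H^-,H]_{\mathsf{MSub}(G)}\cong\mathsf{MSub}(Q_H)\) via \(K\mapsto K/H^-\), and it remains to compute \(\mathsf{MSub}(Q)\) for each type. If \(Q\) is prime, the only modules are trivial, so the only subgroup modules are \(\{\e\}\) and \(Q\) (singletons other than \(\{\e\}\) are not subgroups), giving \(\mathbf 2\). If \(Q\) is equality-ordered, every subset is a module, so we get \(\operatorname{Sub}(Q)\). If \(Q\) is totally ordered, the modules of a chain are exactly its intervals, and a subgroup is an interval iff it is convex, giving \(\operatorname{ConvSub}(Q)\).

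Completeness of the sublattice comes last. For meets, an intersection of subgroups which are modules is again a subgroup and a module. For joins, let \(\mathcal K\subseteq\mathsf{MSub}(G)\) and let \(J=\langle\bigcup\mathcal K\rangle\). There are two situations. If all members lie in some interval \([H^-,H]\) of the same \(H\) and are not equal to \(H\), then \(J/H^-\) is the join in \(\mathsf{MSub}(Q_H)\), which by the case analysis is a subgroup join: for convex subgroups of a totally ordered group it is the union of a chain, and in the prime case it is trivial. Such a join is a module, although it may equal \(Q_H\), which gives \(H\). Otherwise the members are cofinal in a cut of the chain \(\cV(G)\), and then \(J\) equals the union \(H_{I'}\), where \(I'\) is the initial segment they generate, and this is strong. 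The key fact used throughout is that distinct intervals \([H^-,H]\) are linearly ordered, with \(H\subseteq H'^-\) whenever \(H<H'\).
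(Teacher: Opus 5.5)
Your proof of the characterization (a)/(b) and of the interval isomorphisms is correct and is essentially the paper's argument. Your $g\in K\setminus H_I$ plays the role of the paper's $x$ with $S(\e,x)\nsubseteq K$, and transitivity of modules together with the lexicographic decomposition of $H$ over $Q_H$ is the paper's local equivalence. One justification needs adjusting: when you say $H$ is the least level not contained in $K$, you need comparability with $K$ for every $L\in\cV(G)$, not only for those of the form $S(\e,k)$ with $k\in K$. It holds because $L$ and $K$ both contain $\e$, and $L\supsetneq K\ni g$ forces $H\subseteq L$. The meet argument is also fine.

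The gap is in the arbitrary joins. Your two situations do not cover all families, and the conclusion you draw in the ``otherwise'' case is false. Take a level $H\in\cV(G)$ that is not the least one, so $H^-\neq\{\e\}$, and a non-strong subgroup-module $K$ with $H^-<K<H$. For instance, in the realisation of \cref{thm:realisation} with $\Gamma=\{1<2\}$, $\lambda(1)=\Ctype$, $\lambda(2)=\Atype$, take $K=\mathbb Z e_1\oplus 2\mathbb Z e_2$. The family $\{\{\e\},K\}$ does not lie in a single interval $[H'^-,H']$, so it falls under your ``otherwise'' case, where you assert that $J=H_{I'}$ is strong. But $J=K$, which is not strong. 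The same failure occurs for any family whose top member is non-strong at a level $H$ but which also contains members properly inside $H^-$.

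The repair is short. Let $I'$ be the set of levels contained in some member of $\mathcal K$. It is an initial segment, and $H_{I'}\subseteq\bigcup\mathcal K$. Strong members have the form $H_I$ with $I\subseteq I'$, so they lie in $H_{I'}$.

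Now suppose a member $K$ is not contained in $H_{I'}$. Then $K$ is non-strong, with level $H_K$. Every level below $H_K$ lies in $H_K^-\subseteq K$, hence belongs to $I'$, while $H_K\notin I'$. Thus $H_K$ is the least element $H^*$ of $\cV(G)\setminus I'$, and $H^{*-}=H_{I'}$, independently of $K$. This gives the correct dichotomy:
\begin{itemize}
\item either every member lies in $H_{I'}$, and then $J=H_{I'}$ is strong;
\item or the members not contained in $H_{I'}$ all lie in $(H^{*-},H^*)$ and contain every other member, so $J$ is their subgroup join and your computation inside $Q_{H^*}$ applies.
\end{itemize}

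Alternatively, follow the paper. First show that binary joins are subgroup joins: two incomparable subgroup-modules must be non-strong at one common level $H$ with $Q_H$ equality-ordered. Extend to finite joins by induction. Then obtain arbitrary joins as directed unions of finite joins, using that a directed union of subgroup-modules is a subgroup-module. This route avoids tracking positions in the chain altogether.
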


Thus all genuine branching among subgroup-modules is localized in
equality-ordered canonical factors.  Further consequences include the
localization of every prime ordered subgroup of size at least three inside a
single prime canonical factor, and the finiteness of \(\cV(G)\) whenever the
underlying order has no infinite antichain.

We next turn to \(N\)-free orders.  Let \(N\) be the four-element poset with
strict comparabilities \(a<b,\qquad c<b,\qquad c<d.\)
A poset is \emph{\(N\)-free} if it has no induced copy of \(N\).  Equivalently,
its comparability graph contains no induced \(P_4\), so it is a cograph
\cite{Lerchs71,Lerchs72}.  For finite posets this is the familiar
series--parallel class, obtained from one-element posets by repeated linear
and disjoint sums.  The infinite situation is subtler: a cograph may be both
connected and co-connected, although additional finiteness hypotheses again
force strong series--parallel structure \cite{Zaguia2024}.

For an \(N\)-free poset the prime Gallai alternative disappears.  The central
question is therefore which alternating chain/antichain decomposition
patterns are compatible with the group operation.  We use the
Hahn--Pouzet--Woodrow representation of cographs by densely
\(0/1\)-valued robust-module meet-trees \cite{HahnPouzetWoodrow2024}.  On
the identity branch, the two labels become two kinds of canonical quotient:
a totally ordered factor, which we denote by type \(\Ctype\), and an
equality-ordered factor, of type \(\Atype\).

The group operation imposes further structure.  The
identity branch consists of convex subgroups; for each level \(H\), the
subgroup \(H^-\) is normal in \(H\); conjugation acts equivariantly on the
chain and its ordered factors; and the order is determined by the first
nontrivial quotient layer of an element.  Conversely, these conditions are
sufficient and give exactly the data needed for the converse construction.

Let \(G\) be a group.  A \emph{two-coloured subgroup chain} on \(G\) is a
chain \(\cH\) of subgroups of \(G\), each member being assigned one of two
types, \(\Ctype\) or \(\Atype\).  For \(H\in\cH\), put \(H^-:=\{\e\}\cup\bigcup\{K\in\cH:K<H\}.\)
The four admissibility conditions below express, respectively, existence of
a leading layer for every nonidentity element, normality of the lower
subgroup, the prescribed total/equality quotient type, and conjugation
equivariance.  We call \(\cH\) \emph{admissible} if:
\begin{enumerate}[label=\textup{(\roman*)}]
\item every \(g\ne\e\) belongs to a least member \(H_g\) of \(\cH\);

\item \(H^-\) is a proper normal subgroup of \(H\) for every \(H\in\cH\);

\item if \(H\) has type \(\Ctype\), then \(H/H^-\) is equipped with a
total group order, while if \(H\) has type \(\Atype\), then \(H/H^-\)
has the equality order;

\item conjugation by every element of \(G\) preserves the chain and its
types, and induces order isomorphisms between the corresponding
\(\Ctype\)-quotients.
\end{enumerate}

The chain is \emph{reduced} if, whenever \(K<H\) have the same type, there
exists \(L\in\cH\) of the opposite type such that \(K<L<H.\)

\begin{theorem}[Representation theorem for \(N\)-free ordered groups]
\label{thm:intro-characterisation}
Let \(G\) be a group.  The \(N\)-free two-sided invariant partial orders
on \(G\) are in one-to-one correspondence with the reduced admissible
two-coloured subgroup chains on \(G\).

Given such a chain \(\cH\), the corresponding order is determined as
follows.  For \(g\ne\e\), let \(H_g\) be the least member of \(\cH\)
containing \(g\).  Then
\[
 g>\e
 \quad\Longleftrightarrow\quad
 H_g\text{ has type }\Ctype
 \text{ and }
 gH_g^->H_g^-
 \text{ in }H_g/H_g^-,
\]
while \(g\parallel\e\) when \(H_g\) has type \(\Atype\); for arbitrary
\(x,y\in G\),
\[
 x<y\quad\Longleftrightarrow\quad x^{-1}y>\e.
\]

Conversely, if \(G\) is an \(N\)-free ordered group, its canonical chain
\(\cV(G)\), with a level \(H\) coloured \(\Ctype\) when \(H/H^-\) is
totally ordered and \(\Atype\) when \(H/H^-\) is equality-ordered, is
the corresponding reduced admissible chain.  In particular,
\[
 H_g=S(\e,g)
 \qquad(g\ne\e).
\]
\end{theorem}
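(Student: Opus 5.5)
The proof has two directions: from an \(N\)-free ordered group to its canonical chain, and from a reduced admissible chain to an \(N\)-free order. A final check shows the two assignments are mutually inverse. I will use \cref{thm:intro-gallai} throughout for the canonical chain \(\cV(G)\), the normality of \(H^-\) in \(H\), and the trichotomy for \(Q_H\).

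\emph{From orders to chains.} Let \(G\) be an \(N\)-free ordered group and colour \(\cV(G)\) as in the statement.

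The first step is to exclude prime factors. Each \(Q_H\) is realised inside \(G\) by choosing one representative from each coset of \(H^-\), and this gives an induced subposet of \(G\). Since \(N\)-freeness is hereditary, \(Q_H\) is \(N\)-free. An \(N\)-free poset is a cograph, so it has a nontrivial module unless it is a two-element set. A two-element group with an invariant order is equality-ordered, so a prime \(Q_H\) cannot occur, and each \(Q_H\) is totally ordered or equality-ordered.

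Next I check admissibility.
\begin{itemize}
\item Condition (i): for \(g\ne\e\), the module \(S(\e,g)\) lies in \(\cV(G)\) and is the least member containing \(g\). This holds because any \(K\in\cV(G)\) with \(g\in K\) is a strong module containing \(\e\) and \(g\), so \(K\supseteq S(\e,g)\).
\item Condition (ii) is \cref{thm:intro-gallai}(b).
\item Condition (iii) is the trichotomy just established.
\item Condition (iv): conjugation is an order automorphism fixing \(\e\), so it permutes the sets \(S(\e,g)\) and preserves their quotient orders.
\end{itemize}

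The order formula is \cref{thm:intro-gallai}(c): \(g>\e\) if and only if \(gH^->H^-\) in \(Q_H\), where \(H=S(\e,g)\).

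Reducedness is where the Hahn--Pouzet--Woodrow representation enters. Suppose \(K<H\) are consecutive levels with no level between them, and both factors are of the same type, say totally ordered. Then the union of the \(H^-\)-cosets is a lexicographic sum of chains of chains. I will argue that \(K\) is then not a strong module of \(G\), or equivalently that \(S(\e,g)\) skips the level \(K\). A totally ordered sum of totally ordered sums is one chain, and in a chain every interval is a module, so the decomposition tree cannot have two consecutive nodes with the same label. The general non-consecutive case, where \(K\) and \(H\) have no level of the opposite type between them, uses the density property of the \(0/1\)-labelled robust-module meet-tree: labels alternate along any branch.

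\emph{From chains to orders.} Let \(\cH\) be a reduced admissible chain and define \(P=\{g\ne\e: H_g \text{ of type } \Ctype,\ gH_g^->H_g^-\}\). I would verify that \(P\) is a positive cone.
\begin{itemize}
\item \(P\cap P^{-1}=\emptyset\), because \(H_{g^{-1}}=H_g\) and the order on \(H_g/H_g^-\) is total.
\item \(P\) is closed under conjugation, by (iv).
\item \(P\) is closed under products. For \(g,h\in P\), compare \(H_g\) and \(H_h\). If they are equal, then \(H_{gh}\) is the same level, since the product is still positive in the total quotient and hence not in \(H_g^-\). If \(H_h<H_g\), then \(gh\in gH_g^-\), which is positive.
\end{itemize}

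Next I show the order is \(N\)-free. Its comparability graph is built level by level: each coset of \(H\) is a lexicographic sum, over \(H/H^-\), of cosets of \(H^-\), and this sum is a series or parallel composition. A \(P_4\) would lie in some least \(H\)-coset containing it and would cross cosets of \(H^-\). The series/parallel structure at that node then forces the \(P_4\) to be disconnected or co-disconnected, which is impossible. Finally, reducedness ensures that \(\cV\) of this order recovers exactly \(\cH\) with its colours, because consecutive equal types would merge. This gives the bijection.

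\emph{Main obstacle.} The delicate step is the infinite case of reducedness and the recovery of \(\cH\). Chains may have limit points, where \(H^-\) is a union with no immediate predecessor. I would handle this by proving directly that \(S(\e,g)=H_g\). One inclusion holds because each \(H\in\cH\) is a module. For the converse, I would show that any strong module containing \(\e\) and \(g\) must contain \(H_g\), using reducedness to exhibit overlapping modules whenever it is smaller.
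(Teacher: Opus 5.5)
Most of your plan is sound. The orders-to-chains direction matches the paper's (\cref{prop:canonical-spine} together with \cref{thm:intro-gallai}), with two small repairs:
\begin{itemize}
\item ``An \(N\)-free poset with at least three elements has a nontrivial module'' is immediate only for finite posets. In the infinite case you should invoke \cref{thm:HPW-tree} (Gallai quotients of a cograph are complete or independent), as the paper does.
\item The intermediate robust node supplied by density lies in \(\cV(G)\) only by \cref{lem:robust-identity-branch}, which you should cite.
\end{itemize}
Your positive-cone verification is correct. Your \(N\)-freeness argument is also correct: take the least coset \(x_1H\) containing the four vertices; they meet at least two \(H^-\)-cosets, and the type of \(H\) makes the induced graph a join or a disjoint union. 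It is a valid and more elementary substitute for the paper's appeal to the converse half of \cref{thm:HPW-tree}.

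The genuine gap is the recovery of \(\cH\), i.e.\ \(S(\e,g)=H_g\). This is what makes the correspondence bijective and gives the final identity of the statement. You assert \(S(\e,g)\subseteq H_g\) ``because each \(H\in\cH\) is a module'', and that does not follow. \(S(\e,g)\) is the least \emph{strong} module through \(\e\) and \(g\), so you need \(H_g\) to be strong, and this fails without reducedness.

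For a counterexample, take on \(\mathbb Z^2\) the non-reduced admissible chain \(K=\{0\}\times\mathbb Z<G\), with both levels of type \(\Ctype\). It produces the lexicographic total order. Here \(K\) is an interval, hence a module, but other intervals overlap it, so \(S(\e,g)=G\ne K=H_g\) for \(g\in K\setminus\{\e\}\).

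So strongness of the levels is exactly where reducedness must be used, and you give no argument for it. In particular you give none at limit levels where \(H^-\) has no largest member, which is the very obstacle you flag. The sentence that ``consecutive equal types would merge'' is only the heuristic, not a proof.

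Conversely, the inclusion for which you propose to use reducedness, \(H_g\subseteq S(\e,g)\), holds for every admissible chain. Suppose \(M\) is strong with \(\e,g\in M\subsetneq H_g\).
\begin{itemize}
\item \(M\) must contain the module \(H_g^-\), since both contain \(\e\) and \(g\notin H_g^-\).
\item \(M\) is then a union of \(H_g^-\)-cosets, because each coset \(xH_g^-\) that meets \(M\) cannot contain \(\e\) unless \(x\in H_g^-\).
\item Hence \(M\) projects to a nontrivial strong module of the chain or antichain \(H_g/H_g^-\), which has none.
\end{itemize}

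The paper closes the real gap with \cref{lem:coset-tree}. The singletons and the cosets \(gH\), \(H\in\cH\), form a densely \(0/1\)-valued ramified meet-tree with \(\{x\}\wedge\{y\}=x\,v(x^{-1}y)\); density is precisely where reducedness enters. The comparability graph of the constructed order is exactly the graph of this tree. The converse half of \cref{thm:HPW-tree} then identifies its robust modules with the cosets \(gH\). Strongness of every \(H\in\cH\) and \(S(\e,g)=v(g)\) follow at once.
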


The representation theorem is sharp at the level of value chains: every
reduced two-coloured chain occurs as the canonical value chain of an
\(N\)-free ordered abelian group whose canonical factors are all isomorphic
as groups to \(\mathbb Z\).  Consequently every nonempty chain occurs as
\(\cV(G)\) for some \(N\)-free ordered abelian group.

The decomposition also restricts canonically to subgroups, but in the
infinite case new canonical levels may arise as unions of ambient levels.
The exact restriction rule is the following.

\begin{theorem}[Subgroup restriction]
\label{thm:intro-subgroup-restriction}
Let \(G\) be an \(N\)-free ordered group and \(K\leq G\).  Put
\[
 \mathcal A_K:=
 \{S_G(\e,k):k\in K\setminus\{\e\}\}\subseteq\cV(G).
\]
Let \(\mathcal B_K\) be the chain of maximal convex subsets of
\(\mathcal A_K\) on which the type is constant, and for
\(H\in\mathcal A_K\) let \([H]\) denote the member of \(\mathcal B_K\)
containing \(H\).  For \(B\in\mathcal B_K\), define
\begin{equation}
 M_B:=\{\e\}\cup
 \{k\in K\setminus\{\e\}:[S_G(\e,k)]\leq B\}.
 \label{eq:intro-restricted-level}
\end{equation}
Then
\[
 \cV(K)=\{M_B:B\in\mathcal B_K\},
\]
and \(M_B\) has the common type of the ambient levels in \(B\).  Thus the
canonical decomposition of \(K\) is the reduced restriction of that of
\(G\).  When \(K\) meets only finitely many ambient canonical levels, this
simply deletes ambient levels not met by \(K\) and coalesces consecutive
remaining levels of the same type.
\end{theorem}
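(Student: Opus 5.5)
The plan is to use \cref{thm:intro-characterisation} on both \(G\) and \(K\). The induced order on \(K\) is again a two-sided invariant \(N\)-free partial order, since translations by elements of \(K\) are restrictions of translations of \(G\) and an induced \(N\) in \(K\) would be an induced \(N\) in \(G\). Hence \(\cV(K)\) is the unique reduced admissible two-coloured chain on \(K\) that induces the restricted order. So it suffices to show that \(\mathcal M:=\{M_B:B\in\mathcal B_K\}\) is a reduced admissible chain on \(K\), coloured by the common type of \(B\), whose associated order is the restriction of the order of \(G\). Uniqueness in the one-to-one correspondence then gives \(\cV(K)=\mathcal M\).

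\textbf{Step 1: each \(M_B\) is a subgroup and the \(M_B\) form a chain.} For \(B\in\mathcal B_K\) put \(H_B:=\{\e\}\cup\bigcup\{H\in\cV(G):H\in\mathcal A_K,\ [H]\le B\}\). Since \(\cV(G)\) is a chain of convex subgroups by \cref{thm:intro-gallai}(a), \(H_B\) is a subgroup. I will check that \(M_B=K\cap H_B\). For \(k\ne\e\) in \(K\), \(k\in H_B\) holds iff \(S_G(\e,k)\le H\) for some \(H\in\mathcal A_K\) with \([H]\le B\), and this holds iff \([S_G(\e,k)]\le B\), because the blocks of \(\mathcal B_K\) are convex in \(\mathcal A_K\). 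Hence \(M_B\) is a subgroup, and \(B\le B'\) gives \(M_B\subseteq M_{B'}\). The map \(B\mapsto M_B\) is injective, since a witness \(k\) with \(S_G(\e,k)\in B'\setminus B\) separates the two sets.

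\textbf{Step 2: admissibility.} For (i), given \(k\ne\e\), the least member containing \(k\) is \(M_{[S_G(\e,k)]}\). For (ii), \(M_B^-=\{\e\}\cup\{k:[S_G(\e,k)]<B\}\) is a subgroup by the same convexity argument, and it is proper in \(M_B\) since \(B\ne\emptyset\). For normality, conjugation by \(g\in G\) preserves \(\cV(G)\) and its types by \cref{thm:intro-gallai}(c). Since \(k^{-1}S_G(\e,k')k=S_G(\e,k^{-1}k'k)\), conjugation by \(k\in K\) preserves \(\mathcal A_K\) and each block. Combined with \(k^{-1}S_G(\e,k)k=S_G(\e,k)\), which holds because \(S_G(\e,k)\) is a subgroup containing \(k\), this shows that \(S_G(\e,k^{-1}k'k)=k^{-1}S_G(\e,k')k\) is a level of the same rank. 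More precisely, conjugation is an inclusion-preserving automorphism of the chain \(\cV(G)\) fixing \(S_G(\e,k)\). I expect it may permute other levels, and ruling this out is the first delicate point. The fix is to work only with \(K\)-conjugation and the membership criterion \([S_G(\e,\cdot)]\le B\), which is invariant because \(K\)-conjugation preserves \(\mathcal A_K\), the types, and the block structure.

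\textbf{Step 3: quotient types, reducedness, and recovery of the order.} If \(B\) has type \(\Atype\), then every \(k\in M_B\setminus M_B^-\) has \(H_k:=S_G(\e,k)\) of type \(\Atype\), so \(k\parallel\e\) by \cref{thm:intro-characterisation} applied to \(G\). Thus \(M_B/M_B^-\) is equality-ordered. If \(B\) has type \(\Ctype\), each such \(k\) is comparable to \(\e\), and I must show that \(M_B/M_B^-\) carries a total group order compatible with \(G\). This is the main obstacle when \(B\) consists of several (possibly infinitely many) \(\Ctype\)-levels. I would define the positive cone as the set of cosets \(kM_B^-\) with \(k>\e\), and prove it is well defined and total by the lexicographic rule: if \(m\in M_B^-\), then \(S_G(\e,m)\) lies strictly below \(S_G(\e,k)\) in \(\cV(G)\), so the sign of \(km\) equals the sign of \(k\), by \cref{thm:intro-gallai}(c) and convexity of \(S_G(\e,m)\subseteq S_G(\e,k)^-\). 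This is exactly where constancy of type on \(B\) is needed. Reducedness holds because consecutive blocks have opposite types by the maximality defining \(\mathcal B_K\). Finally, the order induced by \(\mathcal M\) through \cref{thm:intro-characterisation} agrees with the restriction from \(G\), since both declare \(k>\e\) exactly when \(S_G(\e,k)\) has type \(\Ctype\) and \(k\) is positive there. The finite-case remark then follows directly: the blocks are the maximal runs of equal type among the levels actually met by \(K\).
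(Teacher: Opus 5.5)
Your overall strategy is the paper's. You verify that \(\{M_B:B\in\mathcal B_K\}\) is a reduced admissible chain on \(K\) whose leading-layer order is the induced order, and then conclude \(\cV(K)=\{M_B\}\) from the uniqueness in \cref{thm:leading-representation}. Writing \(M_B=K\cap H_B\), and building the positive cone of \(M_B/M_B^-\) directly, are harmless variants of the paper's use of \cref{lem:leading-value} and of the module property of \(M_B^-\). There is, however, a genuine error in your normality step, and (D4) is not separately addressed.

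You assert that conjugation by every \(k\in K\) preserves each block of \(\mathcal B_K\), so that the criterion \([S_G(\e,\cdot)]\le B\) is invariant under \(K\)-conjugation. This is false, since it would make every \(M_B\) normal in \(K\). For a counterexample, take \(A=\bigoplus_{n\in\mathbb Z}\mathbb Z e_n\) and \(G=A\rtimes\langle t\rangle\) with \(te_nt^{-1}=e_{n+2}\). The chain \(H_n=\langle e_m:m\le n\rangle\) (\(n\in\mathbb Z\)), coloured by the parity of \(n\) and topped by \(G\) with \(G/A\cong\mathbb Z\) in its usual order, is reduced admissible. For \(K=G\) all blocks are singletons, yet \(tH_0t^{-1}=H_2\), so \(M_{\{H_0\}}=H_0\) is not normal in \(K\).

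What actually holds, and what the proof needs, comes in two parts. First, for arbitrary \(a\in K\), conjugation induces an order- and type-preserving permutation \(B\mapsto a\cdot B\) of \(\mathcal B_K\), with \(aM_Ba^{-1}=M_{a\cdot B}\) and \(aM_B^-a^{-1}=M^-_{a\cdot B}\); this is (D4). Second, for \(a\in M_B\) one has \(a\cdot B=B\), because conjugation by \(a\) fixes every canonical level containing \(a\), i.e.\ every level \(\ge v(a)\):
\begin{enumerate}[label=\textup{(\arabic*)}]
\item if \([v(a)]<B\), every member of \(B\) is fixed;
\item if \([v(a)]=B\), the fixed level \(v(a)\in B\) forces the type-preserving automorphism of \(\mathcal A_K\) to carry \(B\) onto the maximal constant-type block containing \(v(a)\), namely \(B\).
\end{enumerate}
The induced map on \(\mathcal B_K\) is an order automorphism, so it then preserves the blocks below \(B\). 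Hence \(aM_B^-a^{-1}=M_B^-\) and \(M_B^-\triangleleft M_B\).

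Separately, ``consecutive blocks have opposite types'' does not give reducedness when \(\mathcal B_K\) has no consecutive pairs. Argue instead as follows: if \(C<B\) had the same type with no opposite-type block between them, the union of all blocks in \([C,B]\) would be a larger convex constant-type subset of \(\mathcal A_K\), contradicting maximality.
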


The finite-width consequences become especially explicit.  An \(N\)-free
ordered group has finite width if and only if its canonical chain is finite
and each equality-ordered canonical factor is finite.  If \(\{\e\}=H_0<H_1<\cdots<H_r=G\)
is the finite canonical chain, then
\[
 \width(G)=
 \prod_{H_i/H_{i-1}\text{ of type }\Atype}|H_i/H_{i-1}|.
\]
For every subgroup \(K\leq G\), \(\width(K)\mid\width(G),\)
and, more generally, if \(L\triangleleft K\) is a subgroup-module of the
induced order on \(K\), then \(\width(K/L)\mid\width(K)\mid\width(G).\)
In this finite-width setting the least canonical level can moreover be
recognized intrinsically from the comparability and incomparability
structure around \(\e\).  Successively recognizing and quotienting by
the least level reconstructs the canonical chain from the bottom up.

The \(N\)-free class lies inside dimension two even without a finiteness
hypothesis.  Indeed, every finite subposet of an \(N\)-free poset is again
\(N\)-free and hence series--parallel, so it has dimension at most two (the
property is preserved by linear and disjoint sums).  By the compactness
property of finite Dushnik--Miller dimension \cite{KellyTrotter1982}, the
whole poset therefore has dimension at most two.  Thus the present paper
describes the \(N\)-free part of the broader class of ordered groups of
dimension at most two.  A natural next step is to understand canonical
prime Gallai factors that themselves have dimension at most two, and the
additional constraints imposed on such factors by the group operation.

Section~\ref{sec:prelim} recalls the relational decomposition machinery,
and Section~\ref{sec:gallai-groups} proves the compatible-relation subgroup
lemma and the general ordered-group Gallai decomposition.
Sections~\ref{sec:subgroup-modules}--\ref{sec:canonical-consequences}
treat subgroup-modules, overlap and inheritance, quotients, and general
finiteness and width consequences.  Section~\ref{sec:nfree} proves the
\(N\)-free representation and subgroup-restriction theorems,
Section~\ref{sec:finite-width} treats finite width, and the final section
proves the realisation theorem and gives examples.

\section{Modules, strong modules, and Gallai decomposition}
\label{sec:prelim}

The modular decomposition is naturally formulated for arbitrary binary
relations.  We recall lexicographical sums, strong and robust modules, and
Gallai quotients, then specialize to posets and record the order-theoretic
facts needed later.  Finally, we recall the robust-module tree of a cograph
and its interpretation for \(N\)-free posets.

\subsection{Binary relations and Gallai decomposition}

Let \(\sigma\) be a binary relation on a set \(I\), and for each \(i\in I\)
let \(\rho_i\) be a binary relation on a nonempty set \(X_i\), where the
sets \(X_i\) are pairwise disjoint.  The \emph{lexicographical sum} of the
structures \((X_i,\rho_i)\), indexed by \((I,\sigma)\), is the binary
relation \(\rho\) on \(X:=\bigcup_{i\in I}X_i\)
defined, for \(x\in X_i\) and \(y\in X_j\), by
\[
 x\,\rho\,y
 \quad\Longleftrightarrow\quad
 \bigl(i=j\text{ and }x\,\rho_i\,y\bigr)
 \ \text{ or }\
 \bigl(i\ne j\text{ and }i\,\sigma\,j\bigr).
\]
The structures \((X_i,\rho_i)\) are called the \emph{summands}, and
\((I,\sigma)\) is the \emph{index structure}.

\begin{definition}\label{def:module}
Let \(\rho\) be a binary relation on a set \(X\).  A subset \(M\subseteq X\)
is a \emph{module} of \((X,\rho)\) if, for every \(x\in X\setminus M\) and
every \(m,m'\in M\),
\[
 x\,\rho\,m\quad\Longleftrightarrow\quad x\,\rho\,m',
 \qquad
 m\,\rho\,x\quad\Longleftrightarrow\quad m'\,\rho\,x.
\]
The empty set, the singletons of \(X\), and \(X\) itself are modules,
called \emph{trivial}.  The structure \((X,\rho)\) is \emph{prime} if it
has no nontrivial module.
\end{definition}

Modules are also called \emph{intervals} or \emph{autonomous sets}.  The
notion goes back to Fra\"{\i}ss\'e
\cite{fraisse53,fraisse84} and Gallai \cite{Gallai1967}; see also
\cite{ille-woodrow-2009}.

We use the following standard closure properties; see
\cite[Lemma~4.1]{CourcelleDelhomme2008}.

\begin{lemma}\label{lem:module-closure}
Let \((X,\rho)\) be a binary relation.
\begin{enumerate}[label=\textup{(\arabic*)}]
\item The intersection of a nonempty family of modules is a module
      (possibly empty).
\item The union of two modules with nonempty intersection is a module.
\item If \(M\) and \(M'\) are modules and
      \(M\setminus M'\ne\varnothing\), then
      \(M'\setminus M\) is a module.
\end{enumerate}
\end{lemma}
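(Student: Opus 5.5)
The three parts can be checked directly from \cref{def:module}, by fixing an outside point and two inside points and tracking the two equivalences. Write \(x\rho M\) for the statement "\(x\rho m\) for all \(m\in M\)", and note that when \(M\) is a module and \(x\notin M\), either \(x\rho m\) for all \(m\in M\) or for none. The same dichotomy holds for \(m\rho x\).

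\emph{Part (1).} Let \(\{M_i\}\) be a nonempty family of modules and \(M=\bigcap_i M_i\). Take \(x\notin M\) and \(m,m'\in M\). Some index \(i\) has \(x\notin M_i\). Since \(m,m'\in M_i\) and \(M_i\) is a module, both equivalences hold for \(x\), and this is exactly the condition for \(M\). If \(M\) has at most one point the condition is vacuous.

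\emph{Part (2).} Let \(M,M'\) be modules with a common point \(c\), and let \(x\notin M\cup M'\).

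\begin{enumerate}[label=\textup{(\roman*)}]
\item Take any \(m\in M\cup M'\), say \(m\in M\).
\item Since \(M\) is a module and \(x\notin M\), we get \(x\rho m\iff x\rho c\), and likewise \(m\rho x\iff c\rho x\).
\item The case \(m\in M'\) gives the same comparison with \(c\).
\item Every element of \(M\cup M'\) therefore relates to \(x\) exactly as \(c\) does, so \(M\cup M'\) is a module.
\end{enumerate}

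The shared point \(c\) is essential here: without it, the two modules could present different behaviours toward \(x\).

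\emph{Part (3).} Suppose \(M\setminus M'\ne\varnothing\) and pick \(a\in M\setminus M'\). Let \(x\notin M'\setminus M\) and \(y,z\in M'\setminus M\). There are two cases according to where \(x\) lies.

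\begin{itemize}
\item If \(x\notin M'\), then \(y,z\in M'\) and the module property of \(M'\) gives both equivalences directly.
\item Otherwise \(x\in M'\), and since \(x\notin M'\setminus M\) we have \(x\in M\cap M'\). The points \(y,z\) lie outside \(M\), and \(x,a\in M\).
  \begin{itemize}
  \item Because \(M\) is a module and \(y\notin M\), we get \(y\rho x\iff y\rho a\).
  \item The point \(a\) lies outside \(M'\) while \(y,z\in M'\), so the module property of \(M'\) gives \(a\rho y\iff a\rho z\) and \(y\rho a\iff z\rho a\).
  \item Since \(z\notin M\), the module property of \(M\) gives \(z\rho a\iff z\rho x\).
  \item Chaining these, \(y\rho x\iff y\rho a\iff z\rho a\iff z\rho x\).
  \item The symmetric chain gives \(x\rho y\iff a\rho y\iff a\rho z\iff x\rho z\).
  \end{itemize}
\end{itemize}

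In both cases \(M'\setminus M\) satisfies the module condition. If \(M'\setminus M\) is empty or a singleton, it is trivially a module.

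The only nonobvious step is the second case of part (3): the witness \(a\in M\setminus M'\) serves as a pivot that transfers behaviour between \(x\) and the points \(y,z\).
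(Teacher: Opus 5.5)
Your proof is correct. In particular, in part (3) you use the witness $a\in M\setminus M'$ as a pivot, correctly chaining the module conditions of $M$ and $M'$ in both directions. The paper gives no argument of its own here: it simply quotes the lemma from Courcelle and Delhomm\'e \cite[Lemma~4.1]{CourcelleDelhomme2008}, and your direct check from \cref{def:module} is the standard elementary argument behind that citation.
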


A nonempty module \(M\) is \emph{strong} if every module is either
disjoint from \(M\) or comparable with \(M\) under inclusion.  The
singletons and \(X\) are trivial strong modules.  A nonempty intersection
of strong modules is strong, and the union of a nonempty directed family
of strong modules is strong; see
\cite[Lemma~4.2]{CourcelleDelhomme2008}.

Following Courcelle and Delhomm\'e \cite{CourcelleDelhomme2008}, for a
binary relation \((X,\rho)\) and every nonempty \(A\subseteq X\), let
\(S_\rho(A)\) denote the intersection of all strong modules containing
\(A\).  Thus \(S_\rho(A)\) is the least strong module containing \(A\).
For \(x,y\in X\), put \(S_\rho(x,y):=S_\rho(\{x,y\}).\)
A strong module is called \emph{robust} if it is of the form
\(S_\rho(x,y)\) for some \(x,y\in X\), equivalently if it is \(S_\rho(F)\)
for some nonempty finite set \(F\); see
\cite[Lemma~3.1(4)]{CourcelleDelhomme2008}.  When the relation is clear,
we omit the subscript \(\rho\).

Let \(M\) be a strong module of \((X,\rho)\).  For \(x,y\in M\), define \(x\equiv_M y\)
if either \(x=y\), or there exists a strong module containing \(x\) and
\(y\) and properly contained in \(M\).

We use \cite[Lemma~6.5]{HahnPouzetWoodrow2024}.

\begin{lemma}\label{lem:gallai-blocks}
Let \((X,\rho)\) be a binary relation and let \(M\) be a strong module.
Then \(\equiv_M\) is an equivalence relation on \(M\), and its classes
are strong modules.  If \(|M|>1\), then \(\equiv_M\) has at least two
classes if and only if \(M\) is robust.  In that case its classes are
precisely the maximal proper strong submodules of \(M\).
\end{lemma}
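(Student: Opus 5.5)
Since the paper attributes \cref{lem:gallai-blocks} to \cite[Lemma~6.5]{HahnPouzetWoodrow2024}, it could simply be cited; here is how I would prove it directly from \cref{lem:module-closure} and the closure properties of strong modules.

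\emph{\(\equiv_M\) is an equivalence and its classes are strong modules.} Reflexivity and symmetry are immediate. For transitivity, suppose \(x\equiv_M y\) via a strong module \(A\subsetneq M\) and \(y\equiv_M z\) via a strong module \(B\subsetneq M\). Since both contain \(y\), strongness gives \(A\subseteq B\) or \(B\subseteq A\). Thus \(A\cup B\) is one of them: a proper strong submodule of \(M\) containing \(x\) and \(z\).

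Next fix \(x\in M\) and let \([x]\) be its class. If no proper strong submodule of \(M\) contains \(x\) together with another point, then \([x]=\{x\}\), which is strong. Otherwise \([x]\) is the union of all proper strong submodules of \(M\) containing \(x\). These pairwise meet at \(x\), so they form a chain under inclusion, hence a directed family. Its union is therefore strong, by the quoted \cite[Lemma~4.2]{CourcelleDelhomme2008}. Consequently every class is a strong module, and every proper strong submodule of \(M\) lies inside a single class.

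\emph{Number of classes versus robustness.} Assume \(|M|>1\). Suppose first that \(M=S(x,y)\) is robust with \(x\ne y\) in \(M\). If there were only one class, there would be two possibilities:
\begin{itemize}
\item[\textup{(i)}] the class is \(M\) itself, a union of a chain of proper strong submodules; or
\item[\textup{(ii)}] the class is one of those proper strong submodules.
\end{itemize}
In either case \(x\) and \(y\) lie in some common proper strong submodule of \(M\): in case (i) because the chain is directed and contains members containing \(x\) and \(y\) respectively. This contradicts the minimality of \(S(x,y)\), so there are at least two classes.

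Conversely, suppose there are at least two classes, and take \(x,y\) in distinct classes. Then \(S(x,y)\subseteq M\). If the inclusion were strict, \(S(x,y)\) would be a proper strong submodule containing both points, giving \(x\equiv_M y\), a contradiction. Hence \(M=S(x,y)\) is robust.

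\emph{Classes are exactly the maximal proper strong submodules.} In the robust case each class is a proper strong submodule. Any proper strong submodule lies in one class, so the classes are maximal. Conversely, a maximal proper strong submodule \(P\) lies in some class \(C\), and \(C\) is itself a proper strong submodule, so maximality forces \(P=C\).

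The main obstacle is in the first part: making sure that the union of the chain of proper strong submodules through \(x\) really is a strong module. I rely on the directed-union lemma for this rather than reproving it, and I rely on strongness to guarantee that the family is a chain.
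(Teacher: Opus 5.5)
Your proof is correct. The paper gives no proof of its own here: it imports the lemma as Lemma~6.5 of Hahn--Pouzet--Woodrow. Your argument is therefore a self-contained derivation rather than an alternative to an existing one.

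It uses only two closure facts that the paper already records in Section~2:
\begin{enumerate}[label=\textup{(\arabic*)}]
\item two strong modules that meet are nested;
\item the union of a nonempty directed family of strong modules is strong.
\end{enumerate}
Your transitivity step and the description of each class as $\{x\}\cup\bigcup\{A\text{ strong}: x\in A\subsetneq M\}$ apply these facts correctly. So do the identification of the classes with the maximal proper strong submodules and the step ``at least two classes $\Rightarrow$ $M=S(x,y)$''. What your route buys is that the later uses of the lemma, such as the proof of \cref{lem:robust-identity-branch}, rest only on the Courcelle--Delhomm\'e closure properties rather than on an external appendix.

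Two small presentation points:
\begin{enumerate}[label=\textup{(\alph*)}]
\item Writing a robust $M$ with $|M|>1$ as $S(x,y)$ with $x\neq y$ deserves a one-line justification: $S(x,x)=\{x\}$, since singletons are strong.
\item In the direction ``robust $\Rightarrow$ at least two classes'', your case split (i)/(ii) is unnecessary. Case (ii) cannot occur, because a unique class equals $M$, which is not a proper submodule of itself. If there is only one class, then $y\equiv_M x$ with $y\neq x$, and the definition of $\equiv_M$ directly supplies a strong module $A$ with $x,y\in A\subsetneq M$. This gives $S(x,y)\subseteq A\subsetneq S(x,y)$, a contradiction.
\end{enumerate}
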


When \(M\) is robust and has at least two elements, the equivalence
classes of \(\equiv_M\) are called the \emph{Gallai blocks} of \(M\).
They need not themselves be robust.

If \(B\) and \(C\) are distinct Gallai blocks, the relation between them
is uniform: for \(b,b'\in B\) and \(c,c'\in C\),
\[
 b\,\rho\,c\quad\Longleftrightarrow\quad b'\,\rho\,c',
 \qquad
 c\,\rho\,b\quad\Longleftrightarrow\quad c'\,\rho\,b'.
\]
Hence \(\rho\) induces a binary relation on the set of Gallai blocks.
This is the \emph{Gallai quotient} of \(M\), and
\((M,\rho_{\restriction M})\) is the lexicographical sum of its Gallai
blocks indexed by this quotient.  The Gallai quotient has no nontrivial
strong module.

For a binary relation, we say that it is \emph{edge-free} if no two
distinct elements are related in either direction, and \emph{complete}
if every two distinct elements are related in both directions.  The general
Gallai trichotomy follows.  The finite theory goes
back to Gallai \cite{Gallai1967}; for extensions to general binary
relations and to the infinite setting, see
\cite{EhrenfeuchtRozenbergII,HarjuRozenberg1994,
CourcelleDelhomme2008}.

\begin{theorem}\label{thm:no-strong}
A binary relation \((X,\rho)\) with at least two elements has no
nontrivial strong modules if and only if it is prime, edge-free,
complete, or a linear order.
\end{theorem}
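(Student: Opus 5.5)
The plan is to prove the two implications separately. The easy direction uses only the definition of strong module. The converse relies on the Gallai-type analysis of modules that are not strong.

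\emph{Sufficiency.} If \((X,\rho)\) is prime, there are no nontrivial modules, and in particular no nontrivial strong ones. If \(\rho\) is edge-free or complete, every subset of \(X\) is a module. Let \(M\) be a module with \(2\le|M|\) and \(M\ne X\), and pick \(m\ne m'\) in \(M\) and \(x\notin M\). Then \(\{m,x\}\) is a module that overlaps \(M\), so \(M\) is not strong. If \(\rho\) is a linear order, the modules are exactly the convex subsets. Given a nontrivial convex \(M\), pick \(x\notin M\); by convexity \(x\) lies entirely below or entirely above \(M\). Say it lies below, and choose \(m<m'\) in \(M\). Then \([x,m]\) is convex and contains \(x\) and \(m\) but not \(m'\), so it overlaps \(M\). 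The case where \(x\) lies above \(M\) is symmetric.

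\emph{Necessity.} Assume \(|X|\ge2\) and that \(X\) has no nontrivial strong module. The first step is to note that for any \(x\ne y\), \(S(x,y)\) is a non-singleton strong module, so \(S(x,y)=X\). Hence \(X\) is robust, and by \cref{lem:gallai-blocks} its Gallai blocks are the maximal proper strong submodules of \(X\), which here are the singletons. Thus \((X,\rho)\) coincides with its own Gallai quotient. It remains to show directly that a structure with no nontrivial strong module and at least one nontrivial module is edge-free, complete, or linear.

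For this step I would introduce three undirected graphs on \(X\):
\begin{itemize}
\item \(\Gamma_0\), joining \(x,y\) when neither \(x\rho y\) nor \(y\rho x\);
\item \(\Gamma_2\), joining them when both hold;
\item \(\Gamma_1\), joining them when exactly one holds.
\end{itemize}
Next I would check that the connected components of the complement of \(\Gamma_0\) are strong modules, and likewise for the complement of \(\Gamma_2\). A module overlapping such a component would have to straddle a pair outside it, and by \cref{lem:module-closure}(2),(3) it could be enlarged until it contradicted maximality of the component. Hence if either complement is disconnected, each of its components is a singleton. That makes \(\rho\) edge-free, respectively complete. Otherwise both complements are connected. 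Then, given a nontrivial module \(M\), an argument via \cref{lem:module-closure} shows that any element outside \(M\) sees all of \(M\) through \(\Gamma_1\) in a single orientation. From this one forces \(\rho\) to be a tournament whose modules are its convex sets, namely a linear order, or else one produces a strong nontrivial module, which is a contradiction.

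\emph{Main obstacle.} The hardest step is this last one in the infinite case. There the classical finite argument, which takes maximal proper modules containing a fixed point, breaks down because such maximal modules need not exist. I would first try to replace maximality by the directed unions of strong modules permitted after \cref{lem:gallai-blocks} and by the robust closures \(S(F)\). If that becomes unwieldy, I would fall back on the infinite Gallai theorem of Courcelle and Delhomm\'e \cite{CourcelleDelhomme2008} (see also \cite{EhrenfeuchtRozenbergII,HarjuRozenberg1994}). That theorem states precisely that the Gallai quotient of a robust module is prime, edge-free, complete, or a linear order, and by the first step the quotient here is \(X\) itself.
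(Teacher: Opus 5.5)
Your sufficiency argument is correct, and so is the first half of necessity. For $x\ne y$ you have $S(x,y)=X$. The components of the graph of pairs related in at least one direction (resp.\ not related in both directions) are strong modules, so a disconnection forces $\rho$ to be edge-free (resp.\ complete). Your justification of strongness does not work as written: a component is maximal as a connected set, not as a module, so enlarging a module contradicts nothing. Argue instead as follows. If a module $P$ overlaps a component $C$, connectivity gives an edge $uw$ of $C$ with $u\in P$ and $w\notin P$, while any $x\in P\setminus C$ is unrelated (resp.\ doubly related) to $w$. So $w$ distinguishes $u$ from $x$.

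The genuine gap is the remaining case, which carries all the content: $X$ not prime, with no nontrivial strong module, and both graphs connected. There you only assert that a point outside a nontrivial module $M$ sees $M$ through $\Gamma_1$ in one orientation. Even granted for every $M$, this says nothing about pairs that no nontrivial module separates, nor about transitivity. The repair you propose cannot work. Inside $X$ every $S(F)$ with $|F|\ge2$ equals $X$, and every directed union of strong modules is trivial, so strong-module tools are vacuous here; the substitute for maximality must be built from non-strong modules. Your final fallback, citing Courcelle--Delhomm\'e after correctly observing that $X$ is its own Gallai quotient, cites the statement itself. That is exactly how the paper treats \cref{thm:no-strong}: it gives no proof and refers to \cite{Gallai1967,EhrenfeuchtRozenbergII,HarjuRozenberg1994,CourcelleDelhomme2008}. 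So the citation is acceptable, but your direct argument is not a proof.

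The following ideas give a short proof that also works for infinite $X$.

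(a) The union $U$ of an overlap component of the nontrivial modules is a module, by finite overlap chains and \cref{lem:module-closure}(2). It is also strong. A module overlapping $U$ overlaps no member, else it would lie in the component. Hence it contains every member it meets and, propagating along overlaps, all of $U$. So here $U=X$.

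(b) For overlapping $M,N$, all pairs taken from distinct parts among $M\setminus N$, $M\cap N$, $N\setminus M$ have one common type. Suppose this type were ``unrelated'' (resp.\ ``doubly related''). Then $M\cup N$ is a module whose corresponding induced graph is disconnected. Its components are strong in $X$: use the crossing-edge argument for modules inside $M\cup N$, and the common-type fact for modules overlapping $M\cup N$. So the components are singletons, and $M\cup N$ is an edge-free (resp.\ complete) module with at least three elements. The largest such module through a point is then strong by the same argument, hence equal to $X$, contradicting connectivity. So the common type is one-directional.

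(c) Suppose a nontrivial module contains $x$ but not $y$. Take a shortest overlap chain from it to a module $N\ni y$. The union $W\ni x$ of the earlier members overlaps $N$ with $y\in N\setminus W$. By (b), exactly one of $x\rho y$, $y\rho x$ holds.

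(d) Every pair is separated in this way. Otherwise the intersection $T$ of the nontrivial modules containing $x$ is the least such module and contains $y$. A module $N$ overlapping $T$ must avoid both $x$ and $y$. Then $T\setminus N$ is a smaller nontrivial module containing both, by \cref{lem:module-closure}(3). Hence $\rho$ is a tournament.

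(e) For transitivity, note that no module contains exactly two vertices of a directed $3$-cycle $x\to y\to z\to x$. After rotating the cycle, (d) and (c) give $W\ni x$ with $y\notin W$, and $N$ as above. If $x\in N$, then $z\in N\setminus W$ and $x\in W\cap N$, which forces $x\to z$. If $x\notin N$, then $W\cup N$ contains $x,y$ but not $z$. Either way you reach a contradiction.
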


We also use the following general alternation property,
Proposition~6.10 of \cite{HahnPouzetWoodrow2024}.

\begin{proposition}[Hahn--Pouzet--Woodrow]
\label{prop:HPW-alternation}
Let \(B\subset A\) be nonsingleton robust modules of a binary relation.
Suppose that the Gallai quotients of \(A\) and \(B\) are of the same
nonprime kind: both edge-free, both complete, or both linear orders.
Then there is a robust module \(C\) with
\[
 B\subset C\subset A
\]
whose Gallai quotient is not of that same kind.
\end{proposition}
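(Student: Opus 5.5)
The plan is to argue by contradiction. I would assume that every robust module \(C\) with \(B\subseteq C\subsetneq A\) has a Gallai quotient of the same kind as \(A\) and \(B\), and then build a module that overlaps a strong module. I would treat the edge-free case in full. The complete case is the same argument with ``no relation in either direction'' replaced by ``related in both directions''. The linear case needs a small variant, described at the end.

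\emph{Step 1 (the chain inside a block).} Let \(D\) be the Gallai block of \(A\) containing \(B\). It exists by \cref{lem:gallai-blocks}, since \(B\) is a proper strong submodule of \(A\). Fix \(b\in B\). I claim that \(D\) is the union of the family \(\mathcal C\) of robust modules \(C\) with \(B\subseteq C\subsetneq A\). Strong modules containing \(B\) are pairwise comparable, so \(\mathcal C\) is a chain, and each member lies in \(D\). Conversely, take \(x\in D\setminus B\). Then \(S(b,x)\subseteq D\) is robust, and it is comparable with \(B\) because both are strong and share \(b\). Since \(x\notin B\), we get \(S(b,x)\supseteq B\), so \(S(b,x)\in\mathcal C\). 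Hence \(D=\bigcup\mathcal C\).

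\emph{Step 2 (a separating block).} Let \(\beta\) be a Gallai block of \(B\), chosen as specified below. Take \(z\in A\setminus\beta\). If \(z\in B\), then \(z\) and \(\beta\) lie in distinct Gallai blocks of \(B\). If \(z\in D\setminus B\), then \(z\in C\) for some \(C\in\mathcal C\). In that case \(\beta\) and \(z\) lie in distinct Gallai blocks of \(C\): the block of \(C\) containing \(b\) is strong, contains \(b\), and is properly contained in \(C\), so it cannot contain \(z\) without containing \(B\) and then, by maximality, being all of \(C\) along the chain. I expect this bookkeeping to be the delicate point. I would use the maximality in \cref{lem:gallai-blocks} together with the fact that \(S(b,z)=C'\in\mathcal C\) is the least strong module containing \(b\) and \(z\), and choose \(C:=S(b,z)\) to get the separation. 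If \(z\in A\setminus D\), then \(z\) and \(\beta\) lie in distinct blocks of \(A\). In every case the relation between \(z\) and \(\beta\) is the quotient relation of an edge-free quotient, so it is empty.

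\emph{Step 3 (edge-free contradiction).} Elements outside \(A\) are uniform on \(A\), and \(\beta\) is unrelated to all of \(A\setminus\beta\). Hence \(A\setminus\beta\) is a module. It contains a point of \(A\setminus D\), because \(A\) has at least two blocks. It contains points of \(B\setminus\beta\subseteq D\), because \(B\) has at least two blocks. It misses \(\beta\subseteq D\). So \(A\setminus\beta\) overlaps the strong module \(D\), which is a contradiction.

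\emph{Step 4 (linear case).} Here Step 2 shows that each \(z\in A\setminus\beta\) is uniformly above or below \(\beta\). Transitivity through the lexicographic sums shows that \(U:=\beta\cup\{z\in A:z>\beta\}\) and its dual are modules. Suppose some block of \(A\) lies above \(D\). Then I take \(\beta\) to be a non-minimal block of \(B\), which is possible because \(B\) is a linear quotient with at least two blocks. In that case \(U\) contains that block of \(A\) and \(\beta\), but misses the lower blocks of \(B\), so \(U\) overlaps \(D\). If instead the blocks of \(A\) lie only below \(D\), I use the dual down-set with \(\beta\) non-maximal. Either way \(D\) is overlapped, which is again a contradiction, and this proves \cref{prop:HPW-alternation}.
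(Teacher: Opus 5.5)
Your proof is correct. It is a genuinely different route from the paper's, because the paper gives no argument for this statement: it imports it as Proposition~6.10 of Hahn--Pouzet--Woodrow and uses it in the proof of \cref{thm:canonical-chain-finite}.

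Your argument runs by contradiction. You locate the Gallai block \(D\) of \(A\) containing \(B\). You then show that a suitable block \(\beta\) of \(B\) is related to all of \(A\setminus\beta\) in the uniform way forced by the common kind. From this you build a module (\(A\setminus\beta\), or the up-set \(U\) or its dual) that overlaps the strong module \(D\).

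This argument is self-contained. It needs only \cref{lem:gallai-blocks} and the fact that a strong module overlaps no module. It treats the three nonprime kinds, and infinite structures, uniformly. The citation buys brevity; your argument buys independence of the finiteness theorem from an external appendix.

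Two steps should be written out more carefully.

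\emph{Step~2.} Replace the sentence about maximality ``along the chain'' with the direct argument. For \(z\in D\setminus B\), put \(C:=S(b,z)\), which lies in \(\mathcal C\) by Step~1. Since \(z\in C\setminus B\), the set \(B\) is a proper strong submodule of \(C\). Hence it lies in the Gallai block \(E\) of \(C\) containing \(b\). On the other hand, \(z\notin E\), because \(E\) is a strong module containing \(b\) and properly contained in \(S(b,z)\).

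\emph{Step~4.} The phrase ``transitivity through the lexicographic sums'' hides an easy but necessary case analysis. Fix \(b\in\beta\), and for \(z\in A\setminus\beta\) put \(M_z:=S(b,z)\). As in Steps~1--2, one checks that \(M_z\in\mathcal C\cup\{A\}\) and that \(z\) and \(\beta\) lie in distinct blocks of \(M_z\). Now take \(x<\beta<u\). The strong modules \(M_x\) and \(M_u\) both contain \(b\), so they are nested.
\begin{itemize}
\item If \(M_x=M_u\), transitivity of its linear quotient gives \(x<u\).
\item If \(M_x\subsetneq M_u\), then \(x\) lies in the block of \(M_u\) containing \(\beta\). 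So \(x\) relates to \(u\) exactly as \(\beta\) does, giving \(x<u\).
\item The case \(M_u\subsetneq M_x\) is symmetric.
\end{itemize}
This is exactly what makes \(U\) and its dual modules. The rest of Step~4, the choice of a non-minimal or non-maximal \(\beta\) according to the position of \(D\) in the quotient of \(A\), then gives the required overlap.
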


\subsection{Specialization to posets}
\label{sec:gallai-posets}

We specialize the preceding decomposition to partial orders.  When
the index relation and all the summand relations are partial orders, the
lexicographical sum defined above is the usual lexicographical sum of
posets.  If the index poset is a chain, the sum is called a
\emph{linear sum}; if the index poset is an antichain, it is called a
\emph{disjoint sum}.

Let \(P=(X,\leq)\) be a poset.  In terms of the strict order, a subset
\(M\subseteq X\) is a module precisely when, for every
\(x\in X\setminus M\) and every \(m,m'\in M\),
\[
 x<m\;\Longrightarrow\;x<m',
 \qquad
 m<x\;\Longrightarrow\;m'<x.
\]
The reverse implications follow by interchanging \(m\) and \(m'\).

For posets there is an additional property that will be used repeatedly.

\begin{lemma}\label{lem:1}
Every module of a poset is convex.
\end{lemma}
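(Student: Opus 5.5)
The argument goes directly from the definition of a module, specialized to strict order as recorded just above \cref{lem:1}. Let \(M\) be a module of the poset \(P=(X,\leq)\). We must show that whenever \(a,b\in M\) and \(x\in X\) satisfy \(a\le x\le b\), we have \(x\in M\). We argue by contradiction, assuming \(x\notin M\).

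First we dispose of the degenerate cases. If \(x=a\) or \(x=b\), then \(x\in M\) trivially. So we may assume \(a<x<b\), and in particular \(a\neq b\).

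Now we apply the module condition with the outside element \(x\) and the two inside elements \(a,b\).
\begin{itemize}
\item From \(x<b\) with \(b\in M\), the module property gives \(x<m'\) for every \(m'\in M\); taking \(m'=a\) yields \(x<a\).
\item We also have \(a<x\).
\end{itemize}
Together, \(a<x<a\) gives \(a<a\) by transitivity, contradicting irreflexivity of the strict order. Hence \(x\in M\), so \(M\) is convex.

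Only one side of the module condition was needed, namely the one governing elements below members of \(M\). The other side gives the same contradiction symmetrically: from \(a<x\) we get \(b<x\), which together with \(x<b\) yields \(b<b\).

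There is no genuine obstacle here. The only point requiring care is the convention that modules are stated for strict comparabilities, with the reflexive pairs handled separately. That is why the cases \(x=a\) and \(x=b\) are settled before the module condition is used.
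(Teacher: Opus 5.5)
Your proof is correct: for \(x\notin M\) with \(a<x<b\), the module condition turns \(x<b\) into \(x<a\), which contradicts \(a<x\). The paper states this lemma without proof as a standard fact, and your direct check from the strict-order form of the module condition is the routine argument it relies on.
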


For example, the modules of a chain are its ordinary intervals, so a
chain with at least three elements is not prime.

The \emph{comparability graph} of a poset \(P=(X,\leq)\), denoted
\(\Comp(P)\), is the graph with vertex set \(X\) in which two distinct
vertices \(x,y\) are adjacent if and only if they are comparable in \(P\);
that is,
\[
 xy\in E(\Comp(P))
 \quad\Longleftrightarrow\quad
 x<y\ \text{ or }\ y<x.
\]
For \(x\in X\), write \(\inc_P(x):=\{y\in X:y\parallel x\},\)
and let \(\Inc(P)\) denote the \emph{incomparability graph} of \(P\), in
which two distinct vertices are adjacent exactly when they are
incomparable.  We omit the subscript from \(\inc_P(x)\) when the ambient
poset is clear.

Gallai proved in the finite case that a poset and its comparability graph
have the same strong modules \cite{Gallai1967}, and Kelly
\cite{Kelly1985} extended the relevant decomposition results to the
infinite setting.  Hence they have the same robust modules and the same
least strong modules \(S(x,y)\).  We also use the classical
equivalence
\[
 P\text{ is prime}
 \quad\Longleftrightarrow\quad
 \Comp(P)\text{ is prime}.
\]

For posets, \cref{thm:no-strong} simplifies considerably.  The edge-free
case is an antichain, the complete case is excluded by antisymmetry, and
the linear-order case is a chain.  Hence a poset with at least two
elements has no nontrivial strong modules if and only if it is prime, a
chain, or an antichain.

Consequently, if \(M\) is a nonsingleton robust module of a poset \(P\),
then \(P_{\restriction M}\) is the lexicographical sum of its Gallai
blocks indexed by a quotient which is either prime, a chain, or an
antichain.

\subsection{The robust-module tree of a cograph}
\label{sec:cograph-tree}

We recall only the features of the robust-module decomposition of
cographs needed below; see Hahn, Pouzet and Woodrow \cite{HahnPouzetWoodrow2024}.

For a graph \(\Gamma\), let \(\mathcal R(\Gamma)\) denote its robust
modules, including the singletons, ordered by reverse inclusion.  If
\(A\in\mathcal R(\Gamma)\) is nonsingleton and \(\Gamma\) is a cograph,
the Gallai quotient of \(A\) is either a complete graph or an independent
set.  Put
\[
 \tau_\Gamma(A)=
 \begin{cases}
  1,&\text{if the quotient is complete},\\
  0,&\text{if the quotient is independent}.
 \end{cases}
\]
We use the term \emph{ramified meet-tree} in the sense of
Hahn--Pouzet--Woodrow \cite{HahnPouzetWoodrow2024}.  A
\(\{0,1\}\)-valuation on such a tree is called \emph{dense} if, whenever
\(a<b\) are nonsingleton nodes, there is a node \(c\) with
\[
 a<c\leq b
 \qquad\text{and}\qquad
 \tau(c)\ne\tau(a).
\]
In particular, if the two endpoints have the same value, an intermediate
node has the opposite value.

The form of the cograph decomposition used here combines Lemma~6.15,
Lemma~6.18 and Theorem~6.19 of Hahn, Pouzet and
Woodrow \cite{HahnPouzetWoodrow2024}.  Although the main results of that
paper concern countable cographs, these decomposition statements in its
appendix are formulated for arbitrary cographs.

\begin{theorem}[Hahn--Pouzet--Woodrow]\label{thm:HPW-tree}
Let \(\Gamma\) be a cograph.  Then
\((\mathcal R(\Gamma),\tau_\Gamma)\) is a ramified meet-tree with dense
\(\{0,1\}\)-valuation.  If \(x\ne y\) are vertices, then
\[
 xy\in E(\Gamma)
 \quad\Longleftrightarrow\quad
 \tau_\Gamma(S_\Gamma(x,y))=1.
\]
Conversely, if \((T,\tau)\) is a densely \(\{0,1\}\)-valued ramified
meet-tree and its maximal elements are taken as vertices, joining
\(x\ne y\) exactly when \(\tau(x\wedge y)=1\), then the resulting graph
is a cograph and its valued robust-module tree is isomorphic to
\((T,\tau)\).
\end{theorem}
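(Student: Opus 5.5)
The statement is the Hahn--Pouzet--Woodrow theorem, which is quoted from their appendix. My plan is to assemble it from the cited Lemma~6.15, Lemma~6.18 and Theorem~6.19, and to check that each step uses only cograph structure and no countability. The tool underlying everything is \cref{lem:gallai-blocks} together with \cref{thm:no-strong} for graphs. A graph is a symmetric irreflexive binary relation, so a nonsingleton robust module has a Gallai quotient that is prime, edge-free or complete; the linear-order case is impossible by symmetry.

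\textbf{Step 1: the quotient is complete or independent, and the tree structure.} First I will exclude a prime quotient in a cograph. A prime graph with at least three vertices contains an induced \(P_4\), by the finite theory plus compactness; one reduces to finite prime subgraphs using Schmerl--Trotter style critical arguments. Choosing one vertex in each of four Gallai blocks that realise this \(P_4\) then produces an induced \(P_4\) in \(\Gamma\), which is impossible, so \(\tau_\Gamma\) is well defined. Next, strong modules pairwise either nest or are disjoint. Hence under reverse inclusion the robust modules form a forest in which any two nodes \(S(x,y)\) and \(S(u,v)\) have the meet \(S(\{x,y,u,v\})\), which is again robust. Ramification follows from \cref{lem:gallai-blocks}: a nonsingleton robust module has at least two blocks, and each block contains a singleton, which is a maximal node of the tree.

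\textbf{Step 2: the edge formula and density.} For \(x\ne y\), the vertices \(x\) and \(y\) lie in distinct Gallai blocks of \(S(x,y)\); otherwise a smaller strong module would contain both. The adjacency of \(x\) and \(y\) is therefore read off the quotient, which gives \(xy\in E\) if and only if \(\tau(S(x,y))=1\). Density is obtained directly from \cref{prop:HPW-alternation}, since the graph quotients here are nonprime. In the case \(\tau(a)=\tau(b)\), take the \(C\) that the proposition provides. In the case \(\tau(a)\ne\tau(b)\), take \(c=b\).

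\textbf{Step 3: the converse.} This is the main obstacle. Given a densely valued ramified meet-tree, define the graph on its maximal elements by the rule in the statement. To show the graph is a cograph, suppose \(a,b,c,d\) induce a \(P_4\) in that order. The meets of these four points form a finite subtree with at most three branching nodes, and the edge pattern is determined by the values of \(\tau\) at those nodes. A case check on the shapes of such subtrees shows that no \(\{0,1\}\)-labelling yields a \(P_4\), because meet-trees are ultrametric-like. Next, identify the robust modules of the constructed graph. For each node \(t\), the set \(\hat t\) of maximal elements above \(t\) is a module. I will show that \(\hat t\) is strong and equals \(S(x,y)\) for some \(x,y\) with \(x\wedge y=t\), and that its Gallai blocks are the sets \(\widehat{t'}\) over the immediate branches at \(t\). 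Density is exactly the hypothesis needed here. Without it, two consecutive nodes with equal value would merge into a single strong module, and the tree could not be recovered. With it, every node is distinguished by a change of value somewhere below it, so the map \(t\mapsto\hat t\) is an isomorphism of valued trees. I expect the strongness argument to be the delicate part in the infinite case, where branches can have limit points; I would handle it by using the density condition on every pair of nodes \(a<b\) rather than relying on immediate successors.
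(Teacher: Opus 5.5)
The paper gives no proof of this statement. It imports it from the appendix of Hahn--Pouzet--Woodrow (Lemmas~6.15 and 6.18, Theorem~6.19), remarking only that those statements are formulated for arbitrary cographs. You instead reconstruct the result, which is a genuinely different route.

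Your forward direction is sound. Beyond \cref{lem:gallai-blocks} and \cref{prop:HPW-alternation}, its only external input is that an infinite prime graph contains an induced \(P_4\). That is true, but naive compactness does not give it, because primeness is not a finitary property. You need Ille's theorem on finite prime induced subgraphs, or a compactness argument over finite witnesses of module closure. The citation buys brevity and delegates the check that nothing uses countability. Your reconstruction makes that check explicit and shows exactly where density and ramification enter, at the cost of doing the infinite bookkeeping in the converse yourself.

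Two points in your Step~3 need repair.

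\textbf{The Gallai blocks of \(\hat t\).} They are not in general sets \(\widehat{t'}\) attached to immediate successors. The block containing a leaf \(x\geq t\) is the directed union \(\bigcup\{\hat s:t<s\leq x\}\). It is strong because a directed union of strong modules is strong, but it need not be a node of \(T\). In the paper's own coset tree, for instance, the block \(H^-\) of \(H\) is not a coset of any member of \(\cH\) when \(H\) is a limit of smaller members. Directedness handles this, not density. In fact you do not need the block description at all. Once you know \(\hat t=S(x,y)\) whenever \(x\wedge y=t\), such \(x,y\) lie in distinct blocks, and \(\tau(t)\) is read off from their adjacency.

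\textbf{Strongness of \(\hat t\).} This is the crux, and your plan only promises it. The argument it points to goes as follows.
\begin{itemize}
\item Suppose a module \(M\) overlaps \(\hat t\), with \(x\in M\cap\hat t\), \(z\in M\setminus\hat t\) and \(y\in\hat t\setminus M\), and put \(u:=z\wedge t<t\).
\item Every leaf above \(t\) meets \(z\) in \(u\). Since \(y\notin M\), this forces \(\tau(x\wedge y)=\tau(u)\) for all such cross pairs.
\item Ramification at \(t\), together with the three-point ultrametric property of meets, yields a cross pair with \(x\wedge y=t\). Hence \(\tau(t)=\tau(u)\).
\item Density applied to \(u<t\) gives \(c\) with \(u<c<t\) and \(\tau(c)\ne\tau(u)\). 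Ramification at \(c\) gives a leaf \(w\) with \(w\wedge t=c\).
\item Then \(w\wedge x=c\) and \(w\wedge z=u\), so \(w\) separates \(x\) from \(z\) and must lie in \(M\).
\item But \(y\wedge w=c\) and \(y\wedge x=t\), so \(y\notin M\) separates \(w\) from \(x\), a contradiction.
\end{itemize}

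\textbf{Identifying \(S(p,q)\).} Let \(N\subseteq\hat t\) be a strong module containing leaves \(p,q\) with \(p\wedge q=t\). Comparing \(N\) with the strong sets \(\hat s\), \(s>t\), shows that \(N\) is a union of branches at \(t\). If \(N\) missed the branch of some leaf \(w\), the union of the branches of \(q\) and \(w\) would be a module overlapping \(N\). Hence \(S(p,q)=\hat t\), and ramification makes \(t\mapsto\hat t\) a label-preserving isomorphism.
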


By the Gallai--Kelly correspondence above, for posets this tree may be
read directly from the comparability graph.  Thus, if \(P\) is
\(N\)-free, then \(\Comp(P)\) is a cograph and the
Hahn--Pouzet--Woodrow tree may be read as the robust-module tree of
\(P\).  At a nonsingleton robust module \(A\), label \(0\) means that
the Gallai quotient of \(P[A]\) is an antichain, while label \(1\) means
that it is a chain: an independent quotient has no comparable pair,
whereas a complete quotient is totally ordered by the transitive
orientation.

The \(0/1\)-tree determines the comparability graph.  To recover the
poset itself, one must in addition retain the orientation of each
complete quotient; equivalently, one retains the total order of its
Gallai blocks.

\section{Compatible relations on groups and the identity branch}
\label{sec:gallai-groups}

We prove \cref{thm:intro-gallai} without an \(N\)-free hypothesis, first
isolating the algebraic step in a form independent of order.

Let \(G\) be a group and let \(\rho\) be a binary relation on \(G\).  We say
that \(\rho\) is \emph{compatible with the group operation} if every left and
right translation preserves and reflects \(\rho\); equivalently, for all
\(a,b,x,y\in G\),
\begin{equation}
 x\,\rho\,y
 \quad\Longleftrightarrow\quad
 axb\,\rho\,ayb.
 \label{eq:compatible-relation}
\end{equation}
For a partial order, this is exactly two-sided invariance, so compatible
partial orders are precisely the orders considered in this paper.  Since
inverse translations are again translations, it is enough to require the
forward implication in \eqref{eq:compatible-relation}.

Compatibility also implies
\begin{equation}
 x\,\rho\,y
 \quad\Longleftrightarrow\quad
 y^{-1}\,\rho\,x^{-1},
 \label{eq:inverse-anti}
\end{equation}
because one may multiply on the left by \(y^{-1}\) and on the right by
\(x^{-1}\).  Thus inversion is a relation anti-automorphism.  In particular,
left and right translations, as well as inversion, carry modules to modules
and strong modules to strong modules.  Hence
\begin{align}
 aS_\rho(x,y)&=S_\rho(ax,ay),&
 S_\rho(x,y)a&=S_\rho(xa,ya),\label{eq:translate-S}\\
 S_\rho(x,y)^{-1}&=S_\rho(x^{-1},y^{-1}).
 \label{eq:inverse-S}
\end{align}

\begin{theorem}[Robust subgroup theorem]
\label{thm:robust-subgroup}
Let \(G\) be a group equipped with a binary relation \(\rho\) compatible with
the group operation, and let \(x\ne\e\).  Then
\[
 S_\rho(\e,x)
\]
is a subgroup of \(G\).  If \(\rho\) is a partial order, this subgroup is
convex.
\end{theorem}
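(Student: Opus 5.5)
Put \(S:=S_\rho(\e,x)\). The plan is to show that \(S\) is closed under left translation by its own elements and under inversion, using only the strong-module property and the translation identities \eqref{eq:translate-S}, \eqref{eq:inverse-S}.

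\emph{Step 1 (closure under products).} Let \(s\in S\). By \eqref{eq:translate-S}, \(sS=S_\rho(s,sx)\) is a strong module. It contains \(s\), and \(s\in S\), so \(sS\cap S\ne\varnothing\). Since both are strong, they are comparable under inclusion. The aim is to show \(sS=S\). Suppose, say, \(sS\subsetneq S\). Iterating with left multiplication by \(s\) gives a strictly decreasing chain \(S\supsetneq sS\supsetneq s^2S\supsetneq\cdots\). This alone is not a contradiction, so I would instead use the inverse translation. Since \(s^{-1}(sS)=S\) and \(s^{-1}S\) is strong, \(sS\subsetneq S\) gives \(S\subsetneq s^{-1}S\). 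But \(s^{-1}S=S_\rho(s^{-1},s^{-1}x)\) is the least strong module containing \(s^{-1}\) and \(s^{-1}x\). Hence it suffices to show that \(S\) itself contains \(s^{-1}\) and \(s^{-1}x\), which would contradict minimality.

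This suggests a cleaner route: first characterize \(S\) intrinsically. Note that \(\e\in sS\) if and only if \(s^{-1}\in S\). So the crux is to show \(\e\in sS\) for all \(s\in S\). If \(\e\in sS\), then \(sS\) is a strong module containing \(\e\) and \(s\). It also contains \(sx\), but we need \(x\). Comparability handles this: either \(S\subseteq sS\), or \(sS\subseteq S\), in which case \(sS\) is a strong module containing \(\e\) and \(s\), hence containing \(S_\rho(\e,s)\).

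\emph{Step 2 (the key sub-lemma).} I would prove that for every \(s\in S\setminus\{\e\}\), \(S_\rho(\e,s)\subseteq S\), and that equality of "levels" behaves well. The natural tool is \cref{lem:gallai-blocks}, applied to the robust module \(S\) with \(|S|>1\). Let \(B\) be the Gallai block of \(S\) containing \(\e\).

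If \(s\notin B\), then \(S_\rho(\e,s)=S\), since any strong module containing \(\e\) and \(s\) inside \(S\) but properly smaller would force \(\e\equiv_S s\). Then \(sS=S_\rho(s,s\e)\cdots\). More usefully, \(S_\rho(\e,s)=S\) gives \(s^{-1}S=S_\rho(s^{-1},\e)=S_\rho(\e,s^{-1})\), which is \(S^{-1}\) by \eqref{eq:inverse-S}. And \(S^{-1}=S_\rho(\e,x^{-1})\) is again a strong module containing \(\e\).

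I expect the main obstacle to be exactly this juggling: showing \(S^{-1}=S\) and \(sS=S\). My plan is to use the fact that the strong modules containing \(\e\) form a chain, since any two of them meet at \(\e\). Thus \(S\), \(S^{-1}\), \(sS\) (when it contains \(\e\)), and \(s^{-1}S\) all lie in one chain. A translation that maps one member of the chain into another strictly would then be contradicted by applying the inverse translation and using minimality of a least strong module containing two specified points. Concretely, for \(S^{-1}\) versus \(S\): if \(S^{-1}\subsetneq S\), then inverting gives \(S\subsetneq S^{-1}\), which is absurd. Hence \(S^{-1}=S\), so \(x^{-1}\in S\) and \(S\) is closed under inversion whenever it is of the form \(S_\rho(\e,\cdot)\).

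For products, take \(s\in S\). Then \(s^{-1}\in S\), and \(sS\ni s\cdot s^{-1}=\e\), so \(sS\) and \(S\) lie in the chain. If \(sS\subsetneq S\), then \(S\subsetneq s^{-1}S\), while \(s^{-1}\in S\) gives \(\e\in s^{-1}S\). Applying the same argument to \(s^{-1}\) gives \(s^{-1}S\subseteq S\) or the reverse. Symmetrically, I would then show that the map \(T\mapsto sT\) on this chain is an inclusion-preserving bijection fixing nothing strictly. The inversion trick, namely \((sS)^{-1}=S^{-1}s^{-1}=Ss^{-1}\), combined with right translations, should close the argument; this is the delicate step. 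Once \(sS=S\) holds for all \(s\in S\), \(S\) is a subgroup.

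\emph{Step 3 (convexity).} If \(\rho\) is a partial order, \(S\) is a module and hence convex by \cref{lem:1}.
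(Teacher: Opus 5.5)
Your argument for \(S^{-1}=S\) is correct and is the paper's first step, and convexity via \cref{lem:1} is fine. But there is a genuine gap: you never prove closure under multiplication. The proposal ends with ``should close the argument; this is the delicate step.'' The chain heuristic you sketch does not work as stated. Left translation by \(s\) does not act on the chain of strong modules containing \(\e\); it only sends those containing \(s^{-1}\) into that chain. Moreover, an inclusion-preserving injection of a chain can move a member strictly, as a shift does. So from \(sS\subsetneq S\) you only get \(S\subsetneq s^{-1}S\), which is no contradiction. Minimality has to be applied to specific generators. You never establish the membership you yourself identified in Step~1 as sufficient, namely \(s^{-1}x\in S\) (equivalently \(sx\in S\)) for \(s\in S\). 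The remark that the crux is \(\e\in sS\) is also misdirected: \(s\in sS\cap S\) already makes \(sS\) and \(S\) comparable.

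The missing idea is to use the generator \(x\) itself. By \eqref{eq:translate-S} and \eqref{eq:inverse-S},
\(x^{-1}S=x^{-1}S_\rho(\e,x)=S_\rho(x^{-1},\e)=S^{-1}=S\).
This is exactly the identity you wrote in Step~2 for elements \(s\) with \(S_\rho(\e,s)=S\), but you never applied it to \(s=x\). Inverting \(x^{-1}S=S\) gives \(Sx=S^{-1}x=(x^{-1}S)^{-1}=S\), so \(sx\in S\) for every \(s\in S\). Then \(sS=S_\rho(s,sx)\subseteq S\), because \(S\) is a strong module containing \(s\) and \(sx\). Together with \(S^{-1}=S\), this makes \(S\) a subgroup, and the Gallai-block analysis in Step~2 becomes unnecessary. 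The paper's argument is the mirror image. It uses \(xM^{-1}=S_\rho(x,\e)=M\) to get \(xa^{-1}\in M\) for \(a\in M\), and then \(Ma^{-1}=S_\rho(a^{-1},xa^{-1})\subseteq M\).
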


\begin{proof}
Put \(M:=S_\rho(\e,x).\)
By \eqref{eq:inverse-S}, \(M^{-1}\) is strong and contains \(\e\).  Since
\(M\) and \(M^{-1}\) meet, they are nested.  If, for instance,
\(M\subseteq M^{-1}\), then taking inverses gives \(M^{-1}\subseteq M.\)
The other case is symmetric.  Hence
\begin{equation}
 M^{-1}=M.
 \label{eq:M-inverse}
\end{equation}

If \(a,b\in M\), then
\begin{equation}
 S_\rho(a,b)\subseteq M,
 \label{eq:S-inside-M}
\end{equation}
because \(M\) is strong and contains \(a\) and \(b\).

We first show that \(xa^{-1}\in M \qquad(a\in M).\)
By \eqref{eq:translate-S}, \eqref{eq:inverse-S}, and
\eqref{eq:S-inside-M},
\[
 S_\rho(\e,xa^{-1})
   =xS_\rho(x,a)^{-1}
   \subseteq xM^{-1}.
\]
Moreover, by \eqref{eq:inverse-S} and \eqref{eq:translate-S},
\[
 xM^{-1}
   =xS_\rho(\e,x^{-1})
   =S_\rho(x,\e)
   =M.
\]
Thus \(xa^{-1}\in M\).

Now let \(a,b\in M\).  Right translation gives
\[
 S_\rho(\e,ba^{-1})
   =S_\rho(a,b)a^{-1}
   \subseteq Ma^{-1}
   =S_\rho(a^{-1},xa^{-1}).
\]
By \eqref{eq:M-inverse}, \(a^{-1}\in M\), and we have just shown that
\(xa^{-1}\in M\).  Hence \eqref{eq:S-inside-M} gives \(S_\rho(a^{-1},xa^{-1})\subseteq M.\)
Therefore \(ba^{-1}\in M\), so \(M\) is a subgroup.  If \(\rho\) is a
partial order, \(M\) is a module of the corresponding poset and is convex by
\cref{lem:1}.
\end{proof}

\begin{corollary}\label{cor:translate-robust}
Let \(G\) be a group equipped with a compatible binary relation \(\rho\).
Every nonsingleton robust module is a left coset of a subgroup and also a
right coset of a subgroup.  More precisely,
\[
 S_\rho(x,y)
   =xS_\rho(\e,x^{-1}y)
   =S_\rho(\e,yx^{-1})x.
\]
If \(\rho\) is a partial order, the two subgroups appearing here are convex.
\end{corollary}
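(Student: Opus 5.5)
The plan is to derive both identities from the translation rules \eqref{eq:translate-S} and then apply \cref{thm:robust-subgroup}. Fix a nonsingleton robust module, written as \(S_\rho(x,y)\) with \(x\ne y\). A robust module is by definition of the form \(S_\rho(x,y)\), and if \(x=y\) it is the singleton \(\{x\}\), since singletons are strong. So every nonsingleton robust module arises from a pair with \(x\ne y\).

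For the left-coset form, apply \eqref{eq:translate-S} with left multiplier \(a=x^{-1}\):
\[
 x^{-1}S_\rho(x,y)=S_\rho(\e,x^{-1}y).
\]
Multiplying on the left by \(x\) gives \(S_\rho(x,y)=xS_\rho(\e,x^{-1}y)\). Since \(x^{-1}y\ne\e\), \cref{thm:robust-subgroup} shows that \(S_\rho(\e,x^{-1}y)\) is a subgroup, so \(S_\rho(x,y)\) is a left coset of it.

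For the right-coset form, apply the right-translation rule in \eqref{eq:translate-S} with \(a=x^{-1}\):
\[
 S_\rho(x,y)x^{-1}=S_\rho(\e,yx^{-1}).
\]
Multiplying on the right by \(x\) gives \(S_\rho(x,y)=S_\rho(\e,yx^{-1})x\). Again \(yx^{-1}\ne\e\), so \cref{thm:robust-subgroup} makes \(S_\rho(\e,yx^{-1})\) a subgroup, and \(S_\rho(x,y)\) is a right coset of it.

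When \(\rho\) is a partial order, the convexity clause of \cref{thm:robust-subgroup} makes both subgroups convex. None of these steps is a real obstacle; the only point needing care is that \(S_\rho(x,y)\) is unchanged when \(x\) and \(y\) are swapped, so the formulas do not depend on the order in which the generating pair is listed.
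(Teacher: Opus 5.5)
Your proposal is correct and follows essentially the same route as the paper: translate by \(x^{-1}\) on the left and on the right using \eqref{eq:translate-S}, note that \(x^{-1}y\) and \(yx^{-1}\) are nonidentity because \(x\ne y\), and invoke \cref{thm:robust-subgroup} for both the subgroup and convexity claims. Your extra remark that a nonsingleton robust module must come from a pair with \(x\ne y\), since \(S_\rho(x,x)=\{x\}\), spells out a point the paper leaves implicit.
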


\begin{proof}
Since \(x\ne y\), both \(x^{-1}y\) and \(yx^{-1}\) are nonidentity
elements.  The two identities follow from \eqref{eq:translate-S}, and the
two identity modules are subgroups by \cref{thm:robust-subgroup}.  In the
ordered case they are convex by the same theorem.
\end{proof}

We specialize to the order relation of an ordered group and suppress the
subscript \(\rho\) from \(S_\rho\).

We use the following standard quotient construction.  Let \(G\) be a
group, let \(H\) be a normal subgroup of \(G\), and suppose that \(G\) is
equipped with a compatible order relation \(\leq\).  For cosets
\(\alpha,\beta\in G/H\), define \(\alpha\leq_H\beta\)
if \(a\leq b\) for some \(a\in\alpha\) and \(b\in\beta\).
Equivalently, for every \(a\in\alpha\) there exists \(b\in\beta\) such that
\(a\leq b\).  The relation \(\leq_H\) is a compatible quasi-order on
\(G/H\); see
\cite[Proposition~4, p.~25]{KopytovMedvedev}.

\begin{lemma}\label{convex}
Let \(G\) be an ordered group and let \(H\) be a normal subgroup of
\(G\).  Then \(\leq_H\) is a compatible order relation on \(G/H\) if
and only if \(H\) is convex in \(G\).

Moreover, if \(H\) is a module of the ordered set \((G,\leq)\), then
the order on \(G\) is the lexicographical sum of its cosets of \(H\),
each isomorphic to \(H\), indexed by the ordered quotient
\((G/H,\leq_H)\).
\end{lemma}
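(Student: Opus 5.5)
We prove the two parts of \cref{convex} separately. The first part rests on the standard description of \(\leq_H\) through the positive cone. The second follows from the module property together with translation invariance.

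For the equivalence, the cited result already shows that \(\leq_H\) is a compatible quasi-order, so reflexivity, transitivity and compatibility are given. We only need to show that antisymmetry holds exactly when \(H\) is convex.

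Suppose first that \(H\) is convex, and let \(aH\leq_H bH\) and \(bH\leq_H aH\). Using normality, we choose representatives so that \(a\leq bh\) and \(b\leq ah'\) for some \(h,h'\in H\). Right-multiplying the first inequality by \(h'\) gives \(ah'\leq bhh'\). Combining with the second gives
\[
 b\leq ah'\leq bhh'.
\]
Left-multiplying by \(b^{-1}\) yields \(\e\leq b^{-1}ah'\leq hh'\in H\). Convexity of \(H\), which contains \(\e\), then forces \(b^{-1}ah'\in H\). Hence \(b^{-1}a\in H\) and \(aH=bH\).

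Conversely, suppose \(\leq_H\) is antisymmetric, and let \(h_1\leq g\leq h_2\) with \(h_1,h_2\in H\). Then \(H\leq_H gH\) and \(gH\leq_H H\). Antisymmetry gives \(gH=H\), so \(g\in H\). Hence \(H\) is convex.

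For the second part, assume that \(H\) is a module. Its left cosets \(aH\) are modules, since left translation is an order automorphism, and they partition \(G\). Each coset is order-isomorphic to \(H\) via \(h\mapsto ah\). It remains to show that, for distinct cosets \(\alpha\ne\beta\), the comparisons between their elements are uniform and are given by \(\leq_H\). This is the only step that needs care.

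Suppose \(a\leq b\) with \(a\in\alpha\) and \(b\in\beta\), and let \(a'\in\alpha\), \(b'\in\beta\) be arbitrary. Since \(b\notin\alpha\) and \(\alpha\) is a module, \(a\leq b\) implies \(a'\leq b\). Since \(a'\notin\beta\) and \(\beta\) is a module, this in turn implies \(a'\leq b'\). So all pairs across \(\alpha\) and \(\beta\) compare in the same way.

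Therefore \(x\leq y\) for \(x\in\alpha\), \(y\in\beta\), \(\alpha\ne\beta\), holds exactly when \(\alpha\leq_H\beta\). By \cref{lem:1}, \(H\) is convex, so by the first part \(\leq_H\) is a genuine order on \(G/H\). This is precisely the statement that the order on \(G\) is the lexicographical sum of the cosets of \(H\) indexed by \((G/H,\leq_H)\). We do not need normality for this block decomposition. We use it only to ensure that \(G/H\) is a group and that the left and right cosets coincide.
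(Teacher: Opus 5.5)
Your proof is correct. For the second assertion it is the paper's argument: both cosets are modules, so a single comparable cross pair forces every cross pair to compare the same way, and \cref{lem:1} together with the first part makes \((G/H,\leq_H)\) a genuine ordered index set.

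The first assertion is where you differ. The paper does not prove it but cites Kopytov--Medvedev for the whole equivalence. You take from that source only the routine facts that \(\leq_H\) is a compatible quasi-order, and you prove directly that antisymmetry is equivalent to convexity. In one direction you use the sandwich \(\e\leq b^{-1}ah'\leq hh'\in H\); in the other you note that \(h_1\leq g\leq h_2\) gives \(H\leq_H gH\leq_H H\).

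Your route makes the lemma self-contained and shows that convexity of \(H\) is exactly the missing antisymmetry condition. The paper's route is shorter but leaves that verification to the reference.
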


\begin{proof}
The first assertion is \cite[Proposition~4, p.~25]{KopytovMedvedev}.
For the second, suppose that \(H\) is a module.  By \cref{lem:1},
\(H\) is convex, so \((G/H,\leq_H)\) is an ordered group.  Every coset
of \(H\) is a module and is order-isomorphic to \(H\), since
translations are order automorphisms.  If distinct cosets
\(\alpha,\beta\) satisfy \(\alpha<_H\beta\), choose
\(a\in\alpha\) and \(b\in\beta\) with \(a<b\).  The module property
of \(\alpha\) and \(\beta\) then gives \(a'<b'\qquad(a'\in\alpha,\ b'\in\beta).\)
The reverse case is symmetric, while incomparable quotient elements
have no comparable representatives.  Hence the order on \(G\) is the
lexicographical sum of its cosets of \(H\), indexed by
\((G/H,\leq_H)\).
\end{proof}

We next transport the subgroup structure from
\cref{thm:robust-subgroup} through the Gallai decomposition.

\begin{lemma}[The robust identity branch]\label{lem:robust-identity-branch}
The nonsingleton robust modules of an ordered group \(G\) that contain
\(\e\) are exactly the members of \(\cV(G)\).
\end{lemma}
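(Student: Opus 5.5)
The lemma asserts that the nonsingleton robust modules containing \(\e\) are exactly the sets \(S(\e,g)\), \(g\ne\e\). I will prove the two inclusions separately, with \cref{cor:translate-robust} doing the main work.

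First, every member of \(\cV(G)\) has the required form. If \(g\ne\e\), then \(S(\e,g)\) is robust by definition, since it is of the form \(S(x,y)\). It contains \(\e\), and it is nonsingleton because it contains the two distinct elements \(\e\) and \(g\).

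Conversely, let \(M\) be a nonsingleton robust module with \(\e\in M\). By definition \(M=S(x,y)\) for some \(x,y\in G\). If \(x=y\), then \(S(x,x)=\{x\}\), because singletons are strong modules; this contradicts \(|M|>1\). Hence \(x\ne y\), and \cref{cor:translate-robust} gives
\[
 M=S(x,y)=xS(\e,x^{-1}y),
\]
where \(H:=S(\e,x^{-1}y)\) is a subgroup by \cref{thm:robust-subgroup}. So \(M=xH\) is a left coset of \(H\). Since \(\e\in xH\), we get \(x^{-1}\in H\), hence \(x\in H\), and therefore \(M=xH=H\in\cV(G)\).

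An alternative route avoids cosets. Because \(M\) is strong and contains both \(\e\) and \(x\), it contains \(S(\e,x)\). If \(x\ne\e\), then by \eqref{eq:translate-S},
\[
 x^{-1}M=S(\e,x^{-1}y)\ni x^{-1},
\]
and one can compare this with \(M^{-1}\) using \eqref{eq:inverse-S}. The coset argument above is shorter, so I will use that one.

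I do not expect a real obstacle. The only point needing care is the degenerate case \(x=y\), which is excluded by the nonsingleton hypothesis. The step \(\e\in xH\Rightarrow xH=H\) is elementary, and it uses only that \(H\) is a subgroup, not any order-theoretic property.
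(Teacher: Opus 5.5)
Your proof is correct, but it takes a different route from the paper's.

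The paper argues purely relationally. By \cref{lem:gallai-blocks}, a nonsingleton robust module $A\ni\e$ has at least two maximal proper strong submodules. Take the one $B$ containing $\e$ and any $g\in A\setminus B$. Then $S(\e,g)$ lies in $A$, meets $B$, and is not contained in $B$, so $B<S(\e,g)\leq A$, and maximality of $B$ forces $S(\e,g)=A$. That argument never uses the group operation. It shows, for any binary relation, that a nonsingleton robust module equals $S(a,g)$ for each of its points $a$ and a suitable $g$.

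You instead exploit the group structure. You write $M=S(x,y)=xS(\e,x^{-1}y)$ by \cref{cor:translate-robust}, and then use that a left coset of a subgroup which contains $\e$ is the subgroup itself. What your route buys:
\begin{itemize}
\item it avoids the imported Gallai-block lemma entirely;
\item from any presentation $M=S(x,y)$ it produces the explicit witness $g=x^{-1}y$.
\end{itemize}
The cost is that it depends on \cref{thm:robust-subgroup}. That dependence is legitimate, because the theorem and its corollary come before the lemma and do not use it.

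Your treatment of the degenerate case $x=y$ via $S(x,x)=\{x\}$ is right. The ``alternative route'' paragraph is left unfinished and not used, so you can simply delete it.
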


\begin{proof}
Every member of \(\cV(G)\) is robust and contains \(\e\) by definition.
Conversely, let \(A\) be a nonsingleton robust module containing \(\e\).
By \cref{lem:gallai-blocks}, \(A\) has at least two maximal proper
strong submodules.  Let \(B\) be the one containing \(\e\), and choose
\(g\in A\setminus B\).  The strong module \(S(\e,g)\) is contained in
\(A\), meets \(B\), and is not contained in \(B\).  Hence \(B<S(\e,g)\leq A.\)
Maximality of \(B\) gives \(S(\e,g)=A\).  Thus \(A\in\cV(G)\).
\end{proof}

For \(g\ne\e\), write \(v(g):=S(\e,g).\)

\begin{proof}[Proof of \cref{thm:intro-gallai}]
By \cref{thm:robust-subgroup}, every member of \(\cV(G)\) is a convex
subgroup of \(G\).  Any two members of \(\cV(G)\) contain \(\e\), so,
being strong modules, they are comparable under inclusion.  Hence
\(\cV(G)\) is a chain.

We first describe all strong modules containing \(\e\).  Let
\(I\subseteq\cV(G)\) be an initial segment and put \(H_I:=\{\e\}\cup\bigcup_{H\in I}H.\)
If \(I=\varnothing\), then \(H_I=\{\e\}\).  If \(I\ne\varnothing\),
then \(H_I\) is the union of a chain of subgroups and is therefore a
subgroup.  It is strong because a nonempty directed union of strong
modules is strong.  Hence \(H_I\) is convex by \cref{lem:1}.

Conversely, let \(M\) be a strong module containing \(\e\).  If
\(M=\{\e\}\), then \(M=H_\varnothing\).  Suppose
\(M\ne\{\e\}\).  For every \(x\in M\setminus\{\e\}\), minimality of
\(S(\e,x)\) gives \(S(\e,x)\subseteq M.\)
Therefore \(M=\bigcup\{S(\e,x):x\in M\setminus\{\e\}\}.\)
Put
\[
 I_M:=\{S(\e,x):x\in M\setminus\{\e\}\}\subseteq\cV(G).
\]
If \(K<S(\e,x)\) with \(S(\e,x)\in I_M\) and \(K\in\cV(G)\), then \(K\subseteq S(\e,x)\subseteq M.\)
Writing \(K=S(\e,y)\), we have \(y\in M\), and therefore
\(K\in I_M\).  Thus \(I_M\) is an initial segment and \(M=H_{I_M}.\)

The initial segment is unique.  Indeed, for every initial segment
\(I\subseteq\cV(G)\), \(I=\{S(\e,g):g\in H_I\setminus\{\e\}\}.\)
For if \(H=S(\e,g)\in I\), then \(g\in H\subseteq H_I\).  Conversely,
if \(g\in H_I\setminus\{\e\}\), then \(g\in K\) for some \(K\in I\).
Since \(K\) is strong and contains \(\e\) and \(g\), minimality gives
\(S(\e,g)\subseteq K\), and the initial-segment property yields
\(S(\e,g)\in I\).

Finally, let \(M\) be any strong module of \(G\) and choose \(a\in M\).
Left translation by \(a^{-1}\) is an order automorphism, so
\(a^{-1}M\) is a strong module containing \(\e\).  Hence \(a^{-1}M=H_I\)
for a unique initial segment \(I\subseteq\cV(G)\), and therefore \(M=aH_I.\)
This proves \textup{(a)}.

Fix now \(H\in\cV(G)\), say \(H=S(\e,x)\).  Since \(H\) is a strong
module, the strong modules of the induced order on \(H\) are precisely
the strong modules of \(G\) contained in \(H\).  Since \(H^-=\{\e\}\cup\bigcup\{K\in\cV(G):K<H\},\)
part \textup{(a)} shows that \(H^-\) is a strong convex subgroup of
\(H\).  It is proper: if \(x\in H^-\), then \(x\in K\) for some
\(K\in\cV(G)\) with \(K<H\), and minimality of \(S(\e,x)\) would give \(H=S(\e,x)\subseteq K,\)
a contradiction.

We claim that \(H^-\) is the maximal proper strong submodule of \(H\)
containing \(\e\).  Let \(M\) be such a submodule.  For every
\(y\in M\setminus\{\e\}\), minimality gives \(S(\e,y)\subseteq M<H.\)
Thus \(S(\e,y)<H\), and hence \(y\in H^-\).  Therefore
\(M\subseteq H^-\), proving the claim.

For \(h\in H\), conjugation by \(h\) is an order automorphism fixing
\(\e\) and carrying \(H\) to itself.  It therefore fixes the unique
maximal proper strong submodule of \(H\) containing \(\e\).  Hence \(hH^-h^{-1}=H^-,\)
so \(H^-\triangleleft H\).

Since \(H\) is robust, its maximal proper strong submodules are its
Gallai blocks.  The block containing \(\e\) is \(H^-\).  For each
\(h\in H\), left translation by \(h\) is an order automorphism of the
induced order on \(H\), and hence the Gallai block containing \(h\) is \(hH^-.\)
Thus the Gallai blocks of \(H\) are precisely the left cosets of
\(H^-\).

Because \(H^-\) is a proper convex normal subgroup of \(H\),
\cref{convex} shows that \(Q_H:=H/H^-\)
with its natural quotient order is an ordered group with at least two
elements.  Since the \(H^-\)-cosets are precisely the Gallai blocks of
\(H\), this natural quotient order is exactly the Gallai quotient order.
Hence \(Q_H\) has no nontrivial strong module.  The poset specialization
of \cref{thm:no-strong} therefore shows that \(Q_H\) is prime, a chain,
or an antichain.  Moreover, the induced order on \(H\) is the
lexicographical sum of its \(H^-\)-cosets indexed by \(Q_H\).  For an
ordered group, an antichain order is the equality order.  This proves
\textup{(b)}.

For \textup{(c)}, let \(g\ne\e\) and put \(H=S(\e,g)\).  As above,
\(g\notin H^-\), so \(H^-\) and \(gH^-\) are distinct Gallai blocks
of \(H\).  Their relation is uniform, and \(\e\in H^-, \qquad g\in gH^-.\)
Consequently the image \(gH^-\) in \(Q_H\) determines the comparison
of \(g\) with \(\e\).  In particular,
\begin{equation}
 g>\e
 \quad\Longleftrightarrow\quad
 gH^->H^-
 \text{ in }Q_H.
 \label{eq:general-leading}
\end{equation}
Since
\[
 x\leq y
 \quad\Longleftrightarrow\quad
 \e\leq x^{-1}y,
\]
this determines the partial order on \(G\) from the canonical subgroup
chain and its ordered Gallai quotients.

Finally, let \(a\in G\) and write \(H=S(\e,g)\).  Conjugation by \(a\)
is an order automorphism fixing \(\e\), and
\[
 aHa^{-1}
 =aS(\e,g)a^{-1}
 =S(\e,aga^{-1})
 \in\cV(G).
\]
It carries the Gallai block \(H^-\) containing \(\e\) onto the Gallai
block \((aHa^{-1})^-\) containing \(\e\).  Thus \(aH^-a^{-1}=(aHa^{-1})^-.\)
It follows that conjugation induces an isomorphism
\[
 H/H^-\longrightarrow aHa^{-1}/(aHa^{-1})^-,
 \qquad
 hH^-\longmapsto aha^{-1}(aHa^{-1})^-,
\]
of ordered groups.  This proves the equivariance assertion and completes
the proof.
\end{proof}

\begin{corollary}[Quotient compatibility]
\label{cor:quotient-compatibility}
Let \(G\) be an ordered group and let \(K\triangleleft G\) be a proper
strong module containing \(\e\).  Then
\[
 \cV(G/K)=\{H/K:H\in\cV(G),\ K<H\},
\]
and, for every \(g\notin K\),
\[
 S_{G/K}(K,gK)=S_G(\e,g)/K.
\]
Moreover, if \(H\in\cV(G)\) and \(K<H\), then \(K\leq H^-\),
\[
 (H/K)^-=H^-/K,
\]
and the corresponding canonical factor is naturally order-isomorphic to
\(H/H^-\).
\end{corollary}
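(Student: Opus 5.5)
The plan is to transport modules along the quotient map \(\pi:G\to G/K\). First, \(K\) is a strong module, so by \cref{thm:intro-gallai}(a) \(K=H_I\) for an initial segment \(I\) of \(\cV(G)\). In particular \(K\) is a convex normal subgroup, and \(\leq_K\) is a compatible order on \(G/K\) by \cref{convex}. By the second part of \cref{convex}, \(G\) is the lexicographical sum of the cosets of \(K\), indexed by \((G/K,\leq_K)\).

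The key step is a standard fact about lexicographical sums, which I would prove directly from \cref{def:module}:
\[
 N\subseteq G/K \text{ is a module of } G/K
 \iff
 \pi^{-1}(N) \text{ is a module of } G.
\]
Moreover, every module \(M\) of \(G\) either lies inside a single coset of \(K\) or is a union of \(K\)-cosets. For the latter claim: if \(M\) meets a coset \(aK\) but is not contained in it, then, since \(aK\) is strong (a translate of a strong module), \(M\) and \(aK\) are nested, forcing \(aK\subseteq M\). From this I will deduce that \(\pi^{-1}\) induces a bijection between the strong modules of \(G/K\) and the strong modules of \(G\) that contain a coset of \(K\). Strongness transfers in both directions: a module of \(G/K\) overlapping \(N\) pulls back to a module overlapping \(\pi^{-1}(N)\). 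Conversely, a module of \(G\) overlapping \(\pi^{-1}(N)\) is not inside a single coset, so it is saturated and pushes forward to a module overlapping \(N\).

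With this correspondence, the formula \(S_{G/K}(K,gK)=S_G(\e,g)/K\) for \(g\notin K\) follows. Since \(g\notin K\), \(S_G(\e,g)\) is not contained in \(K\). It is strong and meets \(K\), so it contains \(K\) and is saturated. It is therefore the least saturated strong module containing \(\e\) and \(g\), and its image is the least strong module of \(G/K\) containing \(K\) and \(gK\). The description of \(\cV(G/K)\) then follows. Every \(H\in\cV(G)\) with \(K<H\) is of the form \(S(\e,g)\) with \(g\in H\setminus K\). Conversely, \(H\in\cV(G)\) contains some \(g\notin K\) exactly when \(H\not\subseteq K\), and then \(H\) and \(K\) are nested, so \(K<H\).

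For the remaining claims, take \(K<H\in\cV(G)\). Since \(K\) is strong, \(K\ne H\), and \(K\ni\e\), the last claim in the proof of \cref{thm:intro-gallai}(b) shows \(K\subseteq H^-\). Next, \((H/K)^-\) is the union of the members \(H'/K\) of \(\cV(G/K)\) with \(H'/K<H/K\). These are exactly the \(H'\in\cV(G)\) with \(K<H'<H\). Their union together with \(K\) is \(H^-\): any \(K'\in\cV(G)\) below \(H\) is either contained in \(K\) or strictly contains it. Hence \((H/K)^-=H^-/K\). Finally, the third isomorphism theorem gives \((H/K)/(H^-/K)\cong H/H^-\) as groups. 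The orders agree because both quotient orders are defined by existence of comparable representatives, and the induced order on \(H/K\) from \(\leq_K\) is again defined that way.

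I expect the main obstacle to be the careful transfer of strongness between \(G\) and \(G/K\), especially ruling out modules of \(G\) that cut across cosets of \(K\). This is exactly where strongness of \(K\), rather than mere modularity, is used.
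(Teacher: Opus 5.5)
Your proposal is correct and follows essentially the same route as the paper: both use the lexicographical-sum decomposition from \cref{convex} to match strong modules of \(G/K\) with strong modules of \(G\) that are unions of \(K\)-cosets, use the nestedness of the strong modules \(K\) and \(S_G(\e,g)\) through \(\e\) to get \(K<S_G(\e,g)\), obtain \(K\leq H^-\) from the maximality of \(H^-\) among proper strong submodules of \(H\) containing \(\e\), and finish with the natural isomorphism \((H/K)/(H^-/K)\cong H/H^-\). The main difference is that you prove in detail the module correspondence that the paper only asserts, and you state it correctly for strong modules of \(G\) containing some coset of \(K\); the paper's phrase ``containing \(K\)'' is accurate only for strong modules of \(G/K\) containing the identity coset, which is all the argument needs.
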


\begin{proof}[Proof of Corollary \cref{cor:quotient-compatibility}]
By \cref{thm:intro-gallai}, \(K\) is convex, so the quotient order makes
\(G/K\) an ordered group.  By \cref{convex}, \(G\) is the
lexicographical sum of its \(K\)-cosets indexed by \(G/K\).  Hence
strong modules of \(G/K\) correspond, under inverse image, to strong
modules of \(G\) containing \(K\).

Let \(g\notin K\) and put \(H=S_G(\e,g)\).  Since \(K\) and \(H\) are
strong modules containing \(\e\), they are comparable.  As \(g\notin K\),
we have \(K<H\).  Hence \(S_{G/K}(K,gK)=H/K=S_G(\e,g)/K.\)
The description of \(\cV(G/K)\) follows.

If \(K<H\), then \(K\) is a proper strong submodule of \(H\) containing
\(\e\), and therefore \(K\leq H^-\).  Using the description of
\(\cV(G/K)\) above, we obtain \((H/K)^-=H^-/K.\)
Consequently \((H/K)/(H^-/K)\cong H/H^-,\)
naturally as ordered groups.
\end{proof}

\section{Subgroup-modules}
\label{sec:subgroup-modules}

\begin{proof}[Proof of \cref{thm:intro-subgroup-modules}]
Suppose first that \(K\) is a module of \(G\).  If \(K\) is strong,
then \cref{thm:intro-gallai}\textup{(a)} gives a unique initial segment
\(I\subseteq\cV(G)\) with \(K=H_I\), so \textup{(a)} holds.

Assume that \(K\) is not strong.  We claim that there is
\(x\in K\setminus\{\e\}\) such that \(S(\e,x)\nsubseteq K.\)
Indeed, otherwise every \(S(\e,x)\), \(x\in K\setminus\{\e\}\),
would be contained in \(K\), and
\[
 K=\{\e\}\cup\bigcup_{x\in K\setminus\{\e\}}S(\e,x).
\]
The modules in this union form a chain of strong modules, so their union
is strong, a contradiction.

Choose such an \(x\) and put \(H:=S(\e,x).\)
Since \(H\) is strong, \(K\) is a module, and \(H\cap K\) contains
\(\e\) and \(x\), the sets \(H\) and \(K\) are comparable under
inclusion.  As \(H\nsubseteq K\), it follows that \(K<H.\)

Let \(L\in\cV(G)\) with \(L<H\).  Since \(L\) is strong and
\(L\cap K\ne\varnothing\), the sets \(L\) and \(K\) are comparable.
We cannot have \(K\subseteq L\), since then \(x\in L\), whereas the
strong module \(L\) containing \(\e\) and \(x\) would force \(H=S(\e,x)\subseteq L,\)
contrary to \(L<H\).  Hence \(L\subseteq K\).  Therefore
\[
 H^-=\{\e\}\cup\bigcup_{\substack{L\in\cV(G)\\L<H}}L
 \subseteq K.
\]
Moreover \(x\notin H^-\): otherwise \(x\in L\) for some
\(L\in\cV(G)\) with \(L<H\), and the minimality of
\(H=S(\e,x)\) would give \(H\subseteq L\), a contradiction.  Since
\(x\in K\), the inclusion \(H^-\subseteq K\) is therefore strict.  Thus \(H^-<K<H.\)

The following local equivalence also proves the converse.
Whenever \(H^-\leq K\leq H,\)
we have
\begin{equation}
 K\text{ is a module of }G
 \quad\Longleftrightarrow\quad
 K/H^-\text{ is a module of }Q_H.
 \label{eq:subgroup-module-local}
\end{equation}

Since \(H^-\triangleleft H\), every subgroup \(K\) with
\(H^-\leq K\leq H\) is a union of \(H^-\)-cosets.  Suppose first that
\(K\) is a module of \(G\), and let \(zH^-\notin K/H^-.\)
Then \(z\notin K\).  For all \(k,k'\in K\),
\[
 z\leq k\quad\Longleftrightarrow\quad z\leq k',
 \qquad
 k\leq z\quad\Longleftrightarrow\quad k'\leq z.
\]
Since the natural order on \(Q_H\) is the Gallai quotient order, the same
uniformity holds between \(zH^-\) and the elements of \(K/H^-\).
Thus \(K/H^-\) is a module of \(Q_H\).

Conversely, suppose that \(K/H^-\) is a module of \(Q_H\), and let
\(z\in G\setminus K\).  If \(z\notin H\), then the module property of
\(H\) makes both relations between \(z\) and the elements of
\(K\subseteq H\) uniform.  If \(z\in H\setminus K\), then \(zH^-\notin K/H^-.\)
The module property of \(K/H^-\) in \(Q_H\), together with the
lexicographical decomposition of \(H\) into its \(H^-\)-cosets, again
makes both relations between \(z\) and the elements of \(K\) uniform.
Hence \(K\) is a module of \(G\).  This proves
\eqref{eq:subgroup-module-local}.

The level \(H\) is unique.  Indeed, suppose also that \(H'^-<K<H'\)
for some \(H'\in\cV(G)\).  Since \(\cV(G)\) is a chain, \(H\) and
\(H'\) are comparable.  If, say, \(H<H'\), then by the definition of
\(H'^-\), \(H\subseteq H'^-.\)
Hence \(K<H\subseteq H'^-<K,\)
a contradiction.  The case \(H'<H\) is symmetric.

This proves the necessity in \textup{(b)}, while
\eqref{eq:subgroup-module-local} proves its sufficiency.  The
sufficiency of \textup{(a)} is already contained in
\cref{thm:intro-gallai}\textup{(a)}.

It remains to identify the subgroup-modules inside a canonical interval.
For each \(H\in\cV(G)\), the map \(K\longmapsto K/H^-\)
identifies \([H^-,H]_{\mathsf{MSub}(G)}\) with the lattice of
subgroup-modules of \(Q_H\).  If \(Q_H\) is prime, its only nonempty
modules are the singletons and the whole quotient, so its only
subgroup-modules are the trivial subgroup and \(Q_H\) itself.  If
\(Q_H\) is totally ordered, its modules are exactly its intervals; a
subgroup is therefore a module exactly when it is convex.  If \(Q_H\)
is equality-ordered, every subset is a module, so every subgroup is a
module.  This gives
\[
 [H^-,H]_{\mathsf{MSub}(G)}\cong
 \begin{cases}
  \mathbf 2,&Q_H\text{ prime},\\
  \operatorname{ConvSub}(Q_H),&Q_H\text{ totally ordered},\\
  \operatorname{Sub}(Q_H),&Q_H\text{ equality-ordered}.
 \end{cases}
\]

It remains to prove the complete-sublattice assertion.  The nullary meet
and join are \(G\) and \(\{\e\}\), respectively, and both are
subgroup-modules.  Let \(\{K_i:i\in I\}\) be a nonempty family of
subgroup-modules.  Its intersection is a subgroup and is again a module.
Indeed, if \(z\notin\bigcap_{i\in I}K_i,\)
choose \(i\in I\) with \(z\notin K_i\).  Since \(\bigcap_{i\in I}K_i\subseteq K_i,\)
the uniform relation of \(z\) to \(K_i\) restricts to a uniform relation
to the intersection.  Thus arbitrary meets in
\(\mathsf{MSub}(G)\) agree with those in \(\operatorname{Sub}(G)\).

For joins, first let \(A,B\in\mathsf{MSub}(G)\).  If they are
comparable, their subgroup-generated join is the larger one.  Suppose
that they are incomparable.  Since both contain \(\e\), they overlap.
Consequently neither can be strong.  Let \(H_A,H_B\) be their unique
canonical levels from \textup{(b)}.  If, say, \(H_A<H_B\), then \(A<H_A\leq H_B^-<B,\)
contradicting the incomparability of \(A\) and \(B\).  Hence \(H_A=H_B=:H.\)
The quotient subgroups \(A/H^-, \qquad B/H^-\)
are incomparable subgroup-modules of \(Q_H\).  This is impossible if
\(Q_H\) is prime, and it is also impossible if \(Q_H\) is totally
ordered, since the convex subgroups of a totally ordered group form a
chain.  Therefore \(Q_H\) is equality-ordered.

Every subgroup of an equality-ordered group is a module.  Hence \(\left\langle A/H^-,B/H^-\right\rangle\)
is a subgroup-module of \(Q_H\), and its inverse image under
\(H\to Q_H\) is a subgroup-module of \(G\).  Since \(H^-\leq A\cap B\),
that inverse image is precisely \(\langle A,B\rangle.\)
Thus binary joins in \(\mathsf{MSub}(G)\) agree with the corresponding
subgroup-generated joins.

Finite joins follow by induction.  For an arbitrary nonempty family
\(\{K_i:i\in I\}\), its subgroup-generated join is the union of the
directed family of its finite joins.  A directed union of
subgroup-modules is again a subgroup-module: if \(z\) lies outside the
union and \(x,y\) lie in it, some member of the directed family contains
both \(x\) and \(y\), and the module property there shows that \(z\) has
the same relation to both, in both directions.  Hence arbitrary joins
are preserved as well.

Therefore \(\mathsf{MSub}(G)\) is a complete sublattice of
\(\operatorname{Sub}(G)\).
\end{proof}

\begin{corollary}[Normal subgroup-modules]
\label{cor:normal-subgroup-modules}
Let \(G\) be an ordered group and let \(K\leq G\) be a subgroup-module.
Then the following describe exactly when \(K\triangleleft G\).
\begin{enumerate}[label=\textup{(\alph*)}]
\item If \(K\) is strong, write
\[
 K=H_I=\{\e\}\cup\bigcup_{H\in I}H
\]
for the unique initial segment \(I\subseteq\cV(G)\).  Then \(K\) is
normal if and only if \(I\) is invariant under the conjugation action of
\(G\) on \(\cV(G)\).

\item If \(K\) is not strong, let \(H\) be the unique canonical level with
\[
 H^-<K<H.
\]
Then \(K\) is normal if and only if \(H\triangleleft G\) and, under the
resulting conjugation action of \(G\) on
\[
 Q_H=H/H^-,
\]
the subgroup \(K/H^-\) is invariant.
\end{enumerate}
Consequently the normal subgroup-modules form a complete sublattice of
the normal-subgroup lattice of \(G\).
\end{corollary}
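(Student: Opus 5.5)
The plan is to exploit two facts: conjugation by any \(a\in G\) is an order automorphism fixing \(\e\), and by \cref{thm:intro-gallai}(c) it permutes \(\cV(G)\) via \(aS(\e,g)a^{-1}=S(\e,aga^{-1})\) and satisfies \(aH^-a^{-1}=(aHa^{-1})^-\). Consequently conjugates of subgroup-modules are again subgroup-modules. Strongness is also preserved, since automorphisms carry strong modules to strong modules.

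For (a), take \(K=H_I\). Then \(aKa^{-1}=\{\e\}\cup\bigcup_{H\in I}aHa^{-1}=H_{aIa^{-1}}\), where \(aIa^{-1}\) is again an initial segment because conjugation preserves inclusion. The uniqueness of the initial segment in \cref{thm:intro-gallai}(a) then gives \(aKa^{-1}=K\) if and only if \(aIa^{-1}=I\). Hence \(K\) is normal if and only if \(I\) is invariant for all \(a\).

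For (b), suppose \(H^-<K<H\) with \(K\) not strong. The conjugate \(aKa^{-1}\) is again a non-strong subgroup-module, and it satisfies \((aHa^{-1})^-<aKa^{-1}<aHa^{-1}\). So its unique level, by \cref{thm:intro-subgroup-modules}(b), is \(aHa^{-1}\).

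If \(K\) is normal, uniqueness of the level forces \(aHa^{-1}=H\) for every \(a\), so \(H\triangleleft G\). Then \(H^-\) is also normal, since \(aH^-a^{-1}=(aHa^{-1})^-=H^-\). Hence \(G\) acts on \(Q_H\), and \(K/H^-\) is invariant under this action.

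Conversely, if \(H\triangleleft G\) and \(K/H^-\) is invariant, then \(aKa^{-1}\) contains \(H^-\) and has image \(a(K/H^-)a^{-1}=K/H^-\). Since \(K\) is a union of \(H^-\)-cosets, this gives \(aKa^{-1}=K\).

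For the sublattice claim, I use that \(\mathsf{MSub}(G)\) is a complete sublattice of \(\operatorname{Sub}(G)\) by \cref{thm:intro-subgroup-modules}. Intersections of normal subgroups are normal, and the subgroup generated by a family of normal subgroups is normal. So meets and joins of normal subgroup-modules, computed in \(\operatorname{Sub}(G)\), are normal and are subgroup-modules. These operations coincide with the meets and joins of the normal-subgroup lattice, so the normal subgroup-modules form a complete sublattice of it. The extremes \(\{\e\}\) and \(G\) are normal subgroup-modules.

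I expect the main obstacle to be the "only if" direction in (b), namely extracting \(H\triangleleft G\) from normality of \(K\). This rests entirely on the uniqueness of the level in \cref{thm:intro-subgroup-modules}(b), together with checking that conjugation carries \(K\) to a non-strong module whose level is \(aHa^{-1}\).
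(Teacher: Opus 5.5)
Your proposal is correct and follows the paper's proof essentially step for step: part (a) uses \(aH_Ia^{-1}=H_{aI}\) and uniqueness of the initial segment, part (b) uses the equivariance \(aH^-a^{-1}=(aHa^{-1})^-\) together with uniqueness of the local level from \cref{thm:intro-subgroup-modules}, and the sublattice claim combines closure of normality under intersections and generated joins with the complete-sublattice property of \(\mathsf{MSub}(G)\). Your explicit checks that \(aIa^{-1}\) is again an initial segment and that \(aKa^{-1}\) is a union of \(H^-\)-cosets spell out details the paper leaves implicit.
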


\begin{proof}
Suppose first that \(K=H_I\) is strong.  For \(a\in G\), conjugation
sends it to
\[
 aKa^{-1}=H_{aI},
 \qquad
 aI:=\{aHa^{-1}:H\in I\},
\]
by \cref{thm:intro-gallai}\textup{(c)}.  Thus \(K\) is fixed by every
inner automorphism if and only if \(I\) is invariant.

Suppose now that \(K\) is not strong and \(H^-<K<H.\)
If \(K\triangleleft G\), then for every \(a\in G\), conjugating this
strict containment and using the equivariance of the canonical
decomposition gives \((aHa^{-1})^-<K<aHa^{-1}.\)
The uniqueness of the local level in
\cref{thm:intro-subgroup-modules} therefore forces \(aHa^{-1}=H.\)
Hence \(H\triangleleft G\).  The equivariance assertion in
\cref{thm:intro-gallai}\textup{(c)} then also gives \(aH^-a^{-1}=H^-,\)
so conjugation by \(G\) induces an action by ordered-group
automorphisms on \(Q_H=H/H^-\).  Since \(K\triangleleft G\), the
subgroup \(K/H^-\) is invariant under this action.

Conversely, suppose that \(H\triangleleft G\) and that \(K/H^-\) is
invariant under the induced conjugation action of \(G\) on \(Q_H\).
Then \(K\), being the inverse image of \(K/H^-\) under \(H\longrightarrow Q_H,\)
is fixed by conjugation by every element of \(G\).  Hence
\(K\triangleleft G\).

Finally, arbitrary intersections and subgroup-generated joins of normal
subgroups are normal.  By \cref{thm:intro-subgroup-modules}, the same
operations preserve subgroup-modules.  Therefore the normal
subgroup-modules form a complete sublattice of the normal-subgroup
lattice of \(G\).
\end{proof}

\section{Subgroups crossing modular layers}
\label{sec:overlap-subgroups}

We next control how an arbitrary subgroup can meet a subgroup that is a
module.

\begin{theorem}[Overlap with a modular subgroup]
\label{thm:modular-subgroup-overlap}
Let \(G\) be an ordered group, let \(H\leq G\) be a module of the
ordered set \(G\), let \(K\leq G\), and put
\[
 L:=H\cap K.
\]
Then:
\begin{enumerate}[label=\textup{(\alph*)}]
\item \(L\) is a convex subgroup and a module of the induced order on
\(K\).

\item For every \(k\in K\),
\[
 K\cap kH=kL,
 \qquad
 K\cap Hk=Lk.
\]
Thus every left \(H\)-coset met by \(K\) is met in exactly one left
\(L\)-coset, and similarly on the right.

\item The map
\[
 kL\longmapsto kH
\]
is an order isomorphism from the set of left \(L\)-cosets of \(K\),
with the quotient order induced by their modular partition, onto the
subposet of the left \(H\)-coset quotient of \(G\) induced by the cosets
that meet \(K\).  Consequently the order on \(K\) is the
lexicographical sum of copies of \(L\) indexed by that induced subposet.

\item If, in addition, \(H\triangleleft G\), then
\(L\triangleleft K\), and the natural map
\[
 K/L\longrightarrow G/H,
 \qquad
 kL\longmapsto kH,
\]
is an embedding of ordered groups with image \(KH/H\).
\end{enumerate}
\end{theorem}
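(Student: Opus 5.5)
The argument uses only two facts. Translations are order automorphisms. A module \(H\) of \(G\) that is a subgroup has all its left and right cosets \(aH\), \(Ha\) as modules of \(G\). For \(a\in G\), left translation by \(a\) carries modules to modules, so \(aH\) is a module; similarly \(Ha\). Modules are convex by \cref{lem:1}. I would prove (a)--(d) in that order.

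For (a), \(L=H\cap K\) is clearly a subgroup of \(K\). It is a module of \(K\) with the induced order: if \(z\in K\setminus L\), then \(z\notin H\), so \(z\) relates uniformly to all elements of \(H\), hence of \(L\). Convexity in \(K\) follows from \cref{lem:1} applied to the poset \(K\), or directly from convexity of \(H\) in \(G\).

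For (b), take \(k\in K\). If \(x\in K\cap kH\), then \(k^{-1}x\in K\cap H=L\), so \(x\in kL\). Conversely \(kL\subseteq K\cap kH\) is immediate. The right-coset version is symmetric. So each left \(H\)-coset meeting \(K\) meets it in exactly one left \(L\)-coset.

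For (c), the left \(L\)-cosets of \(K\) are modules of \(K\), since \(kL=K\cap kH\) and the restriction of a module of \(G\) to \(K\) is a module of \(K\). They partition \(K\), so the order on \(K\) is a lexicographical sum of the blocks \(kL\). Each block is isomorphic to \(L\) via left translation by \(k\). The index order is given by \(kL<k'L\) if some (equivalently every) pair of representatives satisfies \(x<y\).

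Similarly, the left \(H\)-cosets partition \(G\) into modules, and the index order on them is defined the same way. The map \(kL\mapsto kH\) is well defined and injective by (b). It preserves and reflects the strict relations, because these are witnessed by the same representatives \(k,k'\in K\) and the relations between distinct modules are uniform. Hence it is an order isomorphism onto the subposet of the left \(H\)-coset quotient induced by the cosets meeting \(K\). The main care here is that \(H\) need not be normal, so the left \(H\)-coset quotient must be treated as a poset on the partition, not as a group.

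For (d), if \(H\triangleleft G\) then \(L=H\cap K\triangleleft K\), and \(kL\mapsto kH\) is the standard injective homomorphism \(K/L\to G/H\) with image \(KH/H\). Since \(H\) is a module it is convex by \cref{lem:1}, so by \cref{convex} the order \(\leq_H\) is exactly the coset-partition order used in (c). The same holds for \(L\) inside \(K\). Part (c) then shows the homomorphism is an order embedding.
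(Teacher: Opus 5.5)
Your proposal is correct and follows essentially the same route as the paper's proof. Both arguments use the module property of \(H\) restricted to \(L\), the coset computation \(k^{-1}x\in H\cap K=L\), comparison of quotient blocks through representatives chosen in \(K\), and \cref{convex} to identify the group quotient orders in (d). The only cosmetic difference is that you get modularity of \(kL\) in \(K\) from \(kL=K\cap kH\), whereas the paper translates the module \(L\) inside \(K\); both are immediate.
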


\begin{proof}
The intersection \(L=H\cap K\) is a subgroup.  If
\(z\in K\setminus L\), then \(z\notin H\).  Since \(H\) is a module
and \(L\subseteq H\), both relations between \(z\) and the elements of
\(L\) are uniform.  Hence \(L\) is a module of the induced order on
\(K\), and therefore it is convex by \cref{lem:1}.  This proves
\textup{(a)}.

For \(k\in K\), an element \(x\) belongs to \(K\cap kH\) if and only
if \(x=kh\)
for some \(h\in H\) and \(x\in K\).  Since \(k,x\in K\), this is
equivalent to \(h=k^{-1}x\in H\cap K=L.\)
Thus \(K\cap kH=kL.\)
The right-coset identity is proved in the same way.  This proves
\textup{(b)}.

By \textup{(a)} and translation invariance, the left cosets of \(L\)
are pairwise disjoint modules of the induced order on \(K\).  Likewise,
the left cosets of \(H\) are pairwise disjoint modules of \(G\).  Hence
both coset partitions carry their natural quotient orders.

The map \(kL\longmapsto kH\)
is well defined and injective because, for \(k_1,k_2\in K\),
\[
 k_1H=k_2H
 \quad\Longleftrightarrow\quad
 k_2^{-1}k_1\in H\cap K=L
 \quad\Longleftrightarrow\quad
 k_1L=k_2L.
\]
It is plainly onto the set of left \(H\)-cosets that meet \(K\).
Relations between distinct quotient blocks are the relations between
any chosen representatives.  Choosing representatives in \(K\)
therefore shows that this bijection preserves and reflects the order.
Since every left \(L\)-coset is order-isomorphic to \(L\), the order on
\(K\) is the lexicographical sum of these cosets indexed by the induced
subposet of the \(H\)-coset quotient.  This proves \textup{(c)}.

Finally, suppose \(H\triangleleft G\).  For every \(k\in K\),
\[
 kLk^{-1}
 =k(H\cap K)k^{-1}
 =kHk^{-1}\cap kKk^{-1}
 =H\cap K
 =L,
\]
so \(L\triangleleft K\).  The map
\[
 K/L\longrightarrow G/H,
 \qquad
 kL\longmapsto kH,
\]
is therefore the homomorphism induced by the inclusion \(K\hookrightarrow
G\).  Its kernel is \(K\cap H=L\), and its image is \(KH/H\).

Moreover, \(H\) is convex in \(G\) because it is a module, and \(L\)
is convex in \(K\) by \textup{(a)}.  Hence \cref{convex} identifies the
orders on \(G/H\) and \(K/L\) with the quotient orders of their
respective coset partitions.  The order-isomorphism established in
\textup{(c)} therefore shows that the displayed homomorphism is an
order embedding.  This proves \textup{(d)}.
\end{proof}

In particular, if \(H\) and \(K\) overlap and
\(H\cap K\ne\{\e\}\), then \(H\cap K\) is a nontrivial proper module
of \(K\); hence \(K\) is decomposable.  If instead
\(H\cap K=\{\e\}\), part \textup{(c)} identifies \(K\), as an ordered
set, with the subposet of the \(H\)-coset quotient induced by the
cosets met by \(K\).

The first consequence is a localization principle for prime ordered
subgroups.

\begin{corollary}[Prime-subgroup localization]
\label{cor:prime-localization}
Let \(G\) be an ordered group and let \(K\leq G\) be prime as an
ordered set, with \(|K|\geq3\).  Then there is a unique
\(H\in\cV(G)\) such that
\[
 K\leq H,
 \qquad
 K\cap H^-=\{\e\},
\]
and the natural map
\[
 K\longrightarrow Q_H=H/H^-,
 \qquad
 k\longmapsto kH^-,
\]
is an embedding of ordered groups.  Moreover \(Q_H\) is prime, and
\[
 S_G(\e,k)=H
\]
for every \(k\in K\setminus\{\e\}\).
\end{corollary}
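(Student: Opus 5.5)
Fix \(k_0\in K\setminus\{\e\}\) and let \(H\) be the least member of \(\cV(G)\) containing \(K\). I plan to obtain it as the union of the chain \(\{S_G(\e,k):k\in K\setminus\{\e\}\}\), which is a strong subgroup \(H_I\) by \cref{thm:intro-gallai}\textup{(a)}. The work is to show that this chain has a largest element and that all its members coincide. The tool is \cref{thm:modular-subgroup-overlap}: a subgroup-module \(M\) of \(G\) meets \(K\) in a convex subgroup-module \(M\cap K\) of \(K\). Since \(K\) is prime with \(|K|\geq 3\), its only modules are trivial. Hence \(M\cap K\in\{\{\e\},K\}\) for every \(M\in\mathsf{MSub}(G)\), and in particular for every strong subgroup \(H_J\), \(J\) an initial segment of \(\cV(G)\).

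First step: all \(S_G(\e,k)\) with \(k\in K\setminus\{\e\}\) are equal. Suppose \(S_G(\e,k_1)<S_G(\e,k_2)\). Then \(M:=S_G(\e,k_1)\) is a strong subgroup with \(k_1\in M\cap K\) and \(k_2\notin M\). So \(M\cap K\) is neither \(\{\e\}\) nor \(K\), which is a contradiction. Call the common value \(H\). It contains every \(k\in K\), so \(K\leq H\). Moreover \(K\cap H^-=\{\e\}\): if \(k\ne\e\) lay in \(H^-\), then \(k\in L\) for some \(L<H\) in \(\cV(G)\), and minimality would force \(S(\e,k)\subseteq L<H\). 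Uniqueness of \(H\) comes from the same equality \(H=S_G(\e,k)\), because any \(H'\in\cV(G)\) with \(K\leq H'\) and \(K\cap H'^-=\{\e\}\) must be the least level containing a given \(k\ne\e\), namely \(S(\e,k)\).

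Second step: the embedding. Apply \cref{thm:modular-subgroup-overlap} inside the ordered group \(H\), with the module \(H^-\), which is normal in \(H\) and strong in \(H\). Part \textup{(d)} gives that \(K/(K\cap H^-)=K\to H/H^-\) is an ordered-group embedding. The orders agree because the induced order on \(H\) is the restriction of that of \(G\).

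Third step: \(Q_H\) is prime. By \cref{thm:intro-gallai}\textup{(b)}, \(Q_H\) is prime, a chain, or equality-ordered, so I rule out the last two using the embedded image of \(K\), which is a prime subposet with at least three elements.
\begin{itemize}
\item If \(Q_H\) were equality-ordered, \(K\) would be an antichain with at least three elements, which has nontrivial modules.
\item If \(Q_H\) were totally ordered, \(K\) would be a chain with at least three elements, which is not prime by the remark after \cref{lem:1}.
\end{itemize}

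The main obstacle is the first step. Its key point is that the overlap theorem must be applied with \(M\) a strong subgroup rather than a coset, and that primeness, not mere indecomposability, is what excludes the intermediate intersections. The hypothesis \(|K|\geq 3\) is needed because a two-element group is vacuously prime.
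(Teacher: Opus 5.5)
Your proof is correct and is essentially the paper's argument. Part (a) of the overlap theorem, combined with primality of \(K\), pins down a single value \(H=S_G(\e,k)\); part (d), applied inside \(H\) to \(H^-\), gives the embedding; and the prime/chain/antichain trichotomy forces \(Q_H\) to be prime. The only difference is the order of steps. You intersect \(K\) with the smaller of two values \(S_G(\e,k_1)<S_G(\e,k_2)\) to show that all values coincide, and then read off \(K\leq H\) and \(K\cap H^-=\{\e\}\) from minimality. The paper instead first intersects \(K\) with \(H\) and with \(H^-\), and deduces the equality of values afterwards.
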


\begin{proof}
Choose \(k\in K\setminus\{\e\}\) and put \(H:=S_G(\e,k).\)
By \cref{thm:intro-gallai}, \(H\) is a subgroup-module of \(G\).
Hence \cref{thm:modular-subgroup-overlap}\textup{(a)} shows that
\(K\cap H\) is a module of the induced order on \(K\).  It contains
the two distinct elements \(\e\) and \(k\), so primality of \(K\)
gives \(K\cap H=K.\)
Thus \(K\leq H.\)

Likewise, \(H^-\) is a subgroup-module of \(H\), and hence also a
module of \(G\).  Applying
\cref{thm:modular-subgroup-overlap}\textup{(a)} to \(H^-\) and \(K\)
shows that \(K\cap H^-\) is a module of \(K\).  Since
\(k\notin H^-\), this module is proper.  By primality, \(K\cap H^-=\{\e\}.\)

Now apply \cref{thm:modular-subgroup-overlap}\textup{(d)} inside the
ordered group \(H\), with the normal subgroup-module \(H^-\) and the
subgroup \(K\).  Since \(K\cap H^-=\{\e\},\)
the natural map
\[
 K\longrightarrow H/H^-,
 \qquad
 k\longmapsto kH^-,
\]
is an embedding of ordered groups.

Let \(x\in K\setminus\{\e\}\) and put \(H_x:=S_G(\e,x).\)
Repeating the argument above with \(x\) in place of \(k\) gives \(K\leq H_x.\)
Since \(H,H_x\in\cV(G)\), they are comparable.  If \(H_x<H\), then
\(k\in K\leq H_x\), while \(H=S_G(\e,k)\) is the least strong module
containing \(\e\) and \(k\); hence \(H\leq H_x\), a contradiction.
The case \(H<H_x\) similarly contradicts the minimality of
\(H_x=S_G(\e,x)\).  Therefore \(H_x=H.\)
Thus \(S_G(\e,x)=H \qquad (x\in K\setminus\{\e\}).\)

It remains to verify uniqueness of the canonical level.  Suppose
\(J\in\cV(G)\) also satisfies \(K\leq J, \qquad K\cap J^-=\{\e\}.\)
Choose \(k\in K\setminus\{\e\}\).  Since \(J\) is strong and contains
\(\e\) and \(k\), minimality gives \(H=S_G(\e,k)\leq J.\)
If \(H<J\), then by the definition of \(J^-\), \(H\subseteq J^-,\)
and therefore \(k\in K\cap J^-,\)
contrary to \(K\cap J^-=\{\e\}\).  Hence \(J=H\).

Finally, by \cref{thm:intro-gallai}\textup{(b)}, the ordered group
\(Q_H\) is prime, totally ordered, or equality-ordered.  If it were
totally ordered, its ordered subgroup \(K\) would be a chain; if it
were equality-ordered, \(K\) would be an antichain.  Since
\(|K|\geq3\), either possibility would give \(K\) a nontrivial module,
contrary to primality.  Hence \(Q_H\) is prime.
\end{proof}

Thus prime ordered structure cannot be assembled by crossing several
canonical Gallai levels: every prime ordered subgroup of size at least
three is inherited from a single prime canonical factor.

\begin{corollary}[Subgroup inheritance]
\label{cor:subgroup-inheritance}
Let \(G\) be an ordered group and let \(K\leq G\).  For
\(H\in\cV(G)\), put
\[
 K_H:=K\cap H,
 \qquad
 K_H^-:=K\cap H^-.
\]
Then \(\{K_H:H\in\cV(G)\}\) is a chain of convex subgroups of \(K\),
each of which is a module of the induced order on \(K\), and
\[
 K_H^-\triangleleft K_H.
\]
Moreover, the natural map
\[
 K_H/K_H^-\longrightarrow H/H^-,
 \qquad
 kK_H^-\longmapsto kH^-,
\]
is an embedding of ordered groups.  The factor \(K_H/K_H^-\) is
nontrivial exactly when
\[
 H=S_G(\e,k)
\]
for some \(k\in K\setminus\{\e\}\), and the order induced on \(K\) is
recovered from these nontrivial factors by the same leading-layer rule
as the order on \(G\).  In particular, if \(G\) is \(N\)-free, every
nontrivial inherited factor has the same \(\Ctype/\Atype\)-type as the
corresponding ambient factor \(H/H^-\).
\end{corollary}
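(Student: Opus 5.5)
The plan is to apply \cref{thm:modular-subgroup-overlap} level by level. Each \(H\in\cV(G)\) is a convex subgroup-module of \(G\) by \cref{thm:intro-gallai}(a). Hence part (a) of the overlap theorem shows that \(K_H=K\cap H\) is a convex subgroup and a module of the induced order on \(K\). The same applies to \(K_H^-=K\cap H^-\), since \(H^-\) is a strong subgroup by \cref{thm:intro-gallai}(a),(b). Because \(\cV(G)\) is a chain, so is \(\{K_H\}\).

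For normality and the embedding, I would work inside the ordered group \(H\). There \(H^-\) is a normal subgroup-module by \cref{thm:intro-gallai}(b), and \(K_H\) is a subgroup of \(H\). Applying \cref{thm:modular-subgroup-overlap}(d) with ambient group \(H\), modular subgroup \(H^-\), and subgroup \(K_H\) gives \(K_H\cap H^-=K_H^-\triangleleft K_H\). It also gives that \(K_H/K_H^-\to H/H^-\) is an ordered-group embedding.

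For the nontriviality criterion, suppose \(K_H\ne K_H^-\). Then there is \(k\in K\cap H\) with \(k\notin H^-\). If \(S_G(\e,k)<H\), then \(k\in S_G(\e,k)\subseteq H^-\), a contradiction. Minimality gives \(S_G(\e,k)\le H\), so \(S_G(\e,k)=H\). Conversely, if \(H=S_G(\e,k)\) with \(k\in K\), then \(k\in K_H\), and \(k\notin H^-\), as shown in the proof of \cref{thm:intro-gallai}(b).

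For the leading-layer rule, take \(k\in K\setminus\{\e\}\) and put \(H=S_G(\e,k)\). Then \(kK_H^-\) is a nontrivial element of \(K_H/K_H^-\). By \eqref{eq:general-leading}, \(k>\e\) if and only if \(kH^->H^-\) in \(Q_H\). Since the embedding preserves and reflects order, this holds if and only if \(kK_H^->K_H^-\) in \(K_H/K_H^-\), which is the same rule. Comparisons \(x<y\) in \(K\) then reduce to \(x^{-1}y>\e\).

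In the \(N\)-free case, \(Q_H\) is a chain or an antichain. A nontrivial ordered subgroup of a totally ordered group is totally ordered and has at least two comparable elements. A subgroup of an equality-ordered group is equality-ordered. So the inherited factor has the same type. No step looks genuinely hard. The main point of care is checking that \cref{thm:modular-subgroup-overlap}(d) may legitimately be applied inside \(H\) rather than inside \(G\), and that the order on \(K_H\) induced from \(H\) coincides with the order induced from \(K\). Both follow from the order on \(H\) being the restriction of the order on \(G\).
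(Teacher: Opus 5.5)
Your proposal is correct and follows essentially the same route as the paper. You apply part (a) of the overlap theorem at each level, apply part (d) inside \(H\) with the normal subgroup-module \(H^-\) to get normality and the ordered embedding, and use minimality of \(S_G(\e,k)\) for the nontriviality criterion. You then transport the leading-layer rule through the order embedding and finish the \(N\)-free type comparison exactly as the paper does.
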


\begin{proof}
Since \(\cV(G)\) is a chain, the family \(\{K\cap H:H\in\cV(G)\}\)
is a chain of subgroups of \(K\).  Fix \(H\in\cV(G)\).  By
\cref{thm:intro-gallai}, \(H\) is a subgroup-module of \(G\), so
\cref{thm:modular-subgroup-overlap}\textup{(a)} gives that \(K_H=K\cap H\)
is a convex subgroup and a module of the induced order on \(K\).

The subgroup \(H^-\) is a subgroup-module of the ordered group \(H\),
and \(H^-\triangleleft H\)
by \cref{thm:intro-gallai}\textup{(b)}.  Applying
\cref{thm:modular-subgroup-overlap}\textup{(d)} inside \(H\), with
subgroup \(K_H\), gives \(K_H\cap H^- =K\cap H^- =K_H^-,\)
and hence \(K_H^-\triangleleft K_H.\)
It also gives an embedding of ordered groups
\[
 K_H/K_H^-\longrightarrow H/H^-,
 \qquad
 kK_H^-\longmapsto kH^-.
\]

Here \(K_H^-\) denotes the intersection \(K\cap H^-\), relative to the
ambient canonical level \(H\); no assertion is being made at this stage
that it is the intrinsic lower canonical subgroup of \(K_H\).

The factor \(K_H/K_H^-\) is nontrivial exactly when \(K\cap(H\setminus H^-)\ne\varnothing.\)
If \(k\in K\cap(H\setminus H^-)\), then \(S_G(\e,k)\leq H\)
because \(H\) is strong and contains \(\e\) and \(k\).  If the
inclusion were strict, then \(S_G(\e,k)<H,\)
so \(k\in H^-\), a contradiction.  Thus \(S_G(\e,k)=H.\)
Conversely, if \(k\in K\setminus\{\e\}\) and \(S_G(\e,k)=H,\)
then \(k\in H\setminus H^-\), and therefore
\(K_H/K_H^-\) is nontrivial.  Hence the active ambient levels are
precisely \(\{S_G(\e,k):k\in K\setminus\{\e\}\}.\)

Finally, let \(x,y\in K\) be distinct and put \(H:=S_G(\e,x^{-1}y).\)
Since \(K\) is a subgroup, \(x^{-1}y\in K.\)
By the definition of \(H\), \(x^{-1}y\in H\setminus H^-,\)
and hence \(x^{-1}y\in K_H\setminus K_H^-.\)
By \cref{thm:intro-gallai}\textup{(c)}, the comparison of \(x\) and
\(y\) is determined by the image of \(x^{-1}y\) in \(H/H^-\).
Since \(K_H/K_H^-\longrightarrow H/H^-\)
is an order embedding, the same comparison is determined by the image
of \(x^{-1}y\) in \(K_H/K_H^-\).  This is precisely the inherited
leading-layer rule.

If \(G\) is \(N\)-free, every ambient factor \(H/H^-\) is either
totally ordered or equality-ordered.  Every nontrivial ordered subgroup
of a totally ordered factor is again totally ordered, while every
subgroup of an equality-ordered factor is equality-ordered.  Thus every
nontrivial inherited factor has the same \(\Ctype/\Atype\)-type as its
ambient factor.
\end{proof}

\section{Further consequences of the canonical decomposition}
\label{sec:canonical-consequences}

The canonical chain has an immediate finite-generation consequence.
If \(G\) is a nontrivial finitely generated ordered group, choose a finite
generating set \(X\subseteq G\setminus\{\e\}.\)
Since \(\cV(G)\) is a chain, the finitely many values \(S_G(\e,x), \qquad x\in X,\)
have a largest member.  This member contains \(X\), hence equals \(G\).
Thus \(G\in\cV(G).\)
Consequently, if \(G\) is moreover \(N\)-free and has no nontrivial proper
convex normal subgroup, its order is either total or equality.  This applies
in particular to nontrivial finitely generated algebraically simple groups.

\begin{theorem}[Finiteness of the canonical chain]
\label{thm:canonical-chain-finite}
Let \(G\) be an ordered group whose underlying order has no infinite
antichain.  Then \(\cV(G)\) is finite.  In particular, every ordered group
of finite width has a finite canonical chain.
\end{theorem}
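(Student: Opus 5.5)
The plan is to build an infinite antichain from an infinite \(\cV(G)\). The idea is to select infinitely many canonical levels whose factor \(Q_H\) is not totally ordered, and to pick in each of them an element incomparable with \(\e\). The leading-layer rule of \cref{thm:intro-gallai}\textup{(c)} should then make these elements pairwise incomparable.

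\emph{Step 1: many non-chain levels.} Suppose \(\cV(G)\) is infinite. Call \(H\in\cV(G)\) a \emph{chain level} if \(Q_H\) is totally ordered. The Gallai quotient of the robust module \(H\) is exactly \(Q_H\) (proof of \cref{thm:intro-gallai}\textup{(b)}), so every level is robust and nonsingleton. If \(B<A\) are two chain levels, \cref{prop:HPW-alternation} gives a robust module \(C\) with \(B\subset C\subset A\) whose Gallai quotient is not a linear order. Since \(C\supseteq B\ni\e\), \cref{lem:robust-identity-branch} gives \(C\in\cV(G)\), and for posets the quotient of \(C\) must then be prime or an antichain.

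Now count. If there are infinitely many chain levels, list \(n\) of them in increasing order. Each of the \(n-1\) consecutive pairs yields such a \(C\) strictly between them, and these \(C\) are distinct. Hence the non-chain levels are infinite. Otherwise the chain levels are finite, and cofinitely many levels are already non-chain. In either case the set \(\mathcal N\) of levels with \(Q_H\) prime or equality-ordered is infinite.

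\emph{Step 2: choosing witnesses.} For each \(H\in\mathcal N\), pick \(a_H\in H\setminus H^-\) with \(a_HH^-\parallel H^-\) in \(Q_H\). If \(Q_H\) is equality-ordered, any \(a_H\in H\setminus H^-\) works. If \(Q_H\) is prime, it is not a chain, because chains with at least three elements are not prime; a two-element ordered group is an antichain, so a prime \(Q_H\) has at least three elements. A non-chain ordered group has some element incomparable to its identity, by translation invariance. By \eqref{eq:general-leading} and \cref{thm:intro-gallai}\textup{(c)}, each witness then satisfies \(a_H\parallel\e\) in \(G\).

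\emph{Step 3: pairwise incomparability.} Take distinct \(H,J\in\mathcal N\). Since \(\cV(G)\) is a chain, we may assume \(H<J\), so \(a_H\in H\subseteq J^-\). The plan is to show the comparison of \(a_H\) and \(a_J\) is decided at level \(J\), where it reduces to \(a_J\parallel\e\):
\begin{itemize}
\item Since \(a_H\in J^-\) and \(a_J\in J\setminus J^-\), we get \(a_H^{-1}a_J\in J\setminus J^-\). Hence \(S(\e,a_H^{-1}a_J)=J\), exactly as in the proof of \cref{cor:subgroup-inheritance}.
\item By normality of \(J^-\) in \(J\), we have \(a_H^{-1}a_JJ^-=a_JJ^-\) in \(Q_J\).
\end{itemize}
By \cref{thm:intro-gallai}\textup{(c)}, the comparison of \(a_H^{-1}a_J\) with \(\e\) is therefore the same as that of \(a_J\) with \(\e\), namely incomparability. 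Thus \(a_H\parallel a_J\).

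So \(\{a_H:H\in\mathcal N\}\) is an infinite antichain, contradicting the hypothesis, and \(\cV(G)\) must be finite. The finite-width statement follows because finite width excludes infinite antichains. The main point needing care is Step 1: one must confirm that \cref{prop:HPW-alternation} applies to members of \(\cV(G)\), and that the intermediate module it produces lies on the identity branch, which is exactly what \cref{lem:robust-identity-branch} supplies.
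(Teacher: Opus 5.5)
Your proof is correct and follows essentially the paper's argument. The paper likewise takes the set \(\mathcal N\) of levels whose factor is not totally ordered, and picks in each \(H\in\mathcal N\) an element of a Gallai block incomparable with \(H^-\), so that these elements form an antichain. It then uses \cref{prop:HPW-alternation} together with \cref{lem:robust-identity-branch} to show that any two chain levels are separated by a member of \(\mathcal N\); you run the same two ingredients in contrapositive order.

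The differences are local. You get the witness by translation invariance in \(Q_H\), where the paper uses an argument with the strict lower and upper sets of \(H^-\) as modules. You check \(a_H\parallel a_J\) by a leading-coset computation, where the paper simply notes that every element of the block \(x_KK^-\) is incomparable with every element of \(K^-\).

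One small inaccuracy: a two-element equality-ordered \(Q_H\) is vacuously prime, so your claim that a prime \(Q_H\) has at least three elements is false. This is harmless, since such a \(Q_H\) is still not a chain.
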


\begin{proof}
Let
\[
 \mathcal N
 :=
 \{H\in\cV(G):Q_H=H/H^-\text{ is not totally ordered}\}.
\]
We first show that \(\mathcal N\) is finite.  Fix \(H\in\mathcal N\), and
consider the point \(H^-\) of the Gallai quotient \(Q_H\).  We claim that
it has an incomparable point in \(Q_H\).

If \(Q_H\) is equality-ordered, this is immediate since \(Q_H\) has at
least two points.  Otherwise \(Q_H\) is prime.  Suppose, for a contradiction,
that \(H^-\) is comparable with every point of \(Q_H\).  Its strict lower
set and strict upper set are then modules of \(Q_H\): every point outside
the strict lower set is above all of it, while every point outside the
strict upper set is below all of it.  Both are proper modules, since neither
contains \(H^-\).  Since \(Q_H\) is prime, each is therefore empty or a
singleton.  Consequently \(Q_H\) is a chain with at most three elements,
contradicting the fact that \(H\in\mathcal N\).  Thus there is a Gallai
block of \(H\) incomparable with \(H^-\).

Choose a representative \(x_H\) in that block.  Since relations between
Gallai blocks are uniform,
\[
 x_H\parallel y
 \qquad
 \text{for every }y\in H^-.
\tag{*}
\]

If \(\mathcal N\) were infinite, choose such an \(x_H\) for every
\(H\in\mathcal N\).  For distinct \(H,K\in\mathcal N\), say \(H<K\), one
has \(H\subseteq K^-,\)
and hence \(x_H\in K^-\).  By \((*)\), \(x_K\parallel x_H.\)
Therefore \(\{x_H:H\in\mathcal N\}\)
would be an infinite antichain, a contradiction.  Hence
\(\mathcal N\) is finite.

It remains to control the levels with totally ordered quotients.  If
\(H<K\) are two such levels, then they are nested nonsingleton robust
modules whose Gallai quotients are both linear orders.  By
\cref{prop:HPW-alternation}, there is a robust module \(L\) with \(H<L<K\)
whose Gallai quotient is not a linear order.  Since \(\e\in H\subset L,\)
\cref{lem:robust-identity-branch} gives \(L\in\cV(G)\).  Hence
\(L\in\mathcal N\).

Thus between any two canonical levels with totally ordered quotients lies
a member of the finite set \(\mathcal N\).  Removing the
\(|\mathcal N|\) members of \(\mathcal N\) from the chain \(\cV(G)\)
leaves at most \(|\mathcal N|+1\) convex intervals, and each such interval
contains at most one level with totally ordered quotient.  Hence there are
at most \(|\mathcal N|+1\) such levels.  Therefore \(\cV(G)\) is finite.
\end{proof}

\begin{corollary}[Finite-width factorization]
\label{cor:general-width-factorization}
An ordered group \(G\) has finite width if and only if \(\cV(G)\) is finite
and every canonical factor \(Q_H=H/H^-\), \(H\in\cV(G)\), has finite width.
In that case
\[
 \width(G)
 =
 \prod_{H\in\cV(G)}\width(Q_H),
\]
with the empty product interpreted as \(1\).
\end{corollary}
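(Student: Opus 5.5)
The argument is induction along the finite chain, using the lexicographical-sum structure. The key fact is that the width of a lexicographical sum is the width of the index poset times the width of a summand, when all summands are isomorphic. More precisely, if a poset \(P\) is the lexicographical sum of copies of a poset \(B\) indexed by \(Q\), then \(\width(P)=\width(Q)\cdot\width(B)\), with the usual conventions for infinite cardinals.

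First I would prove this multiplicativity. Let \(A\) be an antichain of \(P\). Distinct blocks meeting \(A\) must be incomparable in \(Q\), since comparable blocks are uniformly comparable. So the blocks met by \(A\) form an antichain of \(Q\), and inside each block \(A\) is an antichain of a copy of \(B\); this gives \(|A|\le\width(Q)\width(B)\). Conversely, choose an antichain \(I\) of \(Q\) and an antichain in the copy of \(B\) in each block of \(I\). Their union is an antichain of \(P\) of size \(|I|\cdot|\text{antichain of }B|\). Taking suprema gives equality, and in particular \(P\) has finite width iff both factors do.

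\textbf{Forward direction.} Suppose \(\width(G)<\infty\). Then \(G\) has no infinite antichain, so \(\cV(G)\) is finite by \cref{thm:canonical-chain-finite}. Each \(Q_H\) embeds in \(G\) as an induced subposet: choose one representative from each \(H^-\)-coset, and relations between cosets are uniform, so these representatives realize \(Q_H\). Hence \(\width(Q_H)\le\width(G)\) is finite.

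\textbf{Converse and formula.} If \(G\) is trivial, \(\cV(G)\) is empty and the formula reads \(1=1\). Otherwise list the finite chain as \(\{\e\}=H_0<H_1<\cdots<H_r\). Here \(H_0\) is adjoined, and \(H_i^-=H_{i-1}\), since \(H^-\) is the union of the earlier levels, which in a finite chain is the predecessor.

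The top level is \(H_r=G\). Indeed, every \(g\ne\e\) lies in \(S(\e,g)\le H_r\), and since \(H_r\) is a subgroup containing everything, it equals \(G\). (This uses nontriviality; \(G=H_I\) with \(I=\cV(G)\) by \cref{thm:intro-gallai}(a).)

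By \cref{thm:intro-gallai}(b), and via \cref{convex} applied inside \(H_i\), the order on \(H_i\) is the lexicographical sum of copies of \(H_{i-1}\) indexed by \(Q_{H_i}\). Induction on \(i\) with the multiplicativity lemma gives
\[
\width(H_i)=\prod_{j\le i}\width(Q_{H_j}).
\]
At \(i=r\) this shows \(G\) has finite width and yields the product formula.

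The only delicate point is the identification \(H_i^-=H_{i-1}\) and \(H_r=G\) in the finite chain. Both follow directly from part (a) of \cref{thm:intro-gallai}, so I expect no real obstacle beyond stating the lexicographical width lemma carefully.
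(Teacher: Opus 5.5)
Your proof is correct and matches the paper's argument step for step. It gets finiteness of \(\cV(G)\) from \cref{thm:canonical-chain-finite}, bounds \(\width(Q_H)\) by taking one representative per Gallai block, identifies \(H_r=G\) and \(H_i^-=H_{i-1}\), and then inducts using the multiplicativity of width for lexicographical sums of copies of a fixed poset; adjoining \(H_0=\{\e\}\) just folds the paper's separate base case \(\width(H_1)=\width(Q_{H_1})\) into the general step.
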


\begin{proof}
If \(G\) has finite width, \cref{thm:canonical-chain-finite} gives that
\(\cV(G)\) is finite.  Every canonical factor has finite width, since an
antichain of Gallai blocks yields, by choosing one representative from
each block, an antichain in \(G\).

Conversely, the trivial group is immediate, so assume \(G\ne\{\e\}.\)
Suppose \(\cV(G)=\{H_1<\cdots<H_r\}.\)
Since every \(g\ne\e\) belongs to its value \(v(g)=S(\e,g)\in\cV(G),\)
the largest canonical level contains every element of \(G\).  Hence \(H_r=G.\)
Moreover, \(H_1^-=\{\e\},\)
and \(H_i^-=H_{i-1} \qquad (2\leq i\leq r).\)

Since the Gallai blocks of \(H_1\) are singletons, \(\width(H_1)=\width(Q_{H_1}).\)
For \(2\leq i\leq r\), the ordered set \(H_i\) is the
lexicographical sum of copies of \(H_{i-1}\), indexed by \(Q_{H_i}\).

If a poset is the lexicographical sum of copies of a finite-width poset
\(P\), indexed by a finite-width poset \(Q\), then its width is \(\width(P)\width(Q).\)
Indeed, an antichain meets only blocks indexed by an antichain of \(Q\),
and contains at most \(\width(P)\) elements in each such block.
Conversely, choose a maximum antichain \(A\) of \(Q\), and in each block
indexed by \(q\in A\) choose a maximum antichain of that copy of \(P\).
Their union is an antichain of size \(\width(P)\width(Q).\)

Hence, for \(2\leq i\leq r\),
\[
 \width(H_i)
 =
 \width(H_{i-1})\,\width(Q_{H_i}).
\]
Induction gives
\[
 \width(G)
 =
 \width(H_r)
 =
 \prod_{i=1}^r\width(Q_{H_i}),
\]
which is the required product formula.  In particular, \(G\) has finite
width.
\end{proof}

\begin{lemma}[Leading-value law]
\label{lem:leading-value}
Let \(G\) be an ordered group and let \(x,y\ne\e\).  If
\[
 v(x)<v(y)=H,
\]
then
\[
 v(xy)=v(yx)=H
\]
and
\[
 xyH^-=yH^-,
 \qquad
 yxH^-=yH^-.
\]
Consequently, if \(v(x)\ne v(y)\), then
\[
 v(xy)=\max\{v(x),v(y)\}.
\]
For arbitrary \(x,y\ne\e\) with \(xy\ne\e\),
\[
 v(xy)\leq\max\{v(x),v(y)\}.
\]
Moreover,
\[
 v(x^{-1})=v(x).
\]
\end{lemma}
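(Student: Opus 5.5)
We work throughout inside the canonical level \(H=v(y)\), using \cref{thm:intro-gallai}. Recall two facts: \(H\) and \(H^-\) are subgroups, and \(H^-\triangleleft H\). Recall also that for \(g\ne\e\) one has \(g\in v(g)\setminus v(g)^-\), and that \(v(g)\) is the least member of \(\cV(G)\) containing \(g\). The second fact holds because any \(K\in\cV(G)\) containing \(\e\) and \(g\) is a strong module, so \(v(g)\subseteq K\) by minimality.

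\textbf{The case \(v(x)<v(y)=H\).} By the definition of \(H^-\), the hypothesis gives \(x\in v(x)\subseteq H^-\). Since \(H^-\) is normal in \(H\) and \(x,y\in H\), we get
\[
 xyH^-=xH^-\,yH^-=yH^-,
 \qquad
 yxH^-=yH^-.
\]
In particular \(xy\) and \(yx\) lie in \(H\setminus H^-\), because \(y\notin H^-\); hence both are nonidentity elements. Since \(H\) is a strong module containing \(\e\) and \(xy\), minimality gives \(v(xy)\le H\). If the inclusion were strict, then \(xy\in v(xy)\subseteq H^-\), which is a contradiction. Thus \(v(xy)=H\), and in the same way \(v(yx)=H\).

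\textbf{The inverse formula.} We show \(v(x^{-1})=v(x)\). By \cref{thm:robust-subgroup}, \(v(x)\) is a subgroup, so \(x^{-1}\in v(x)\), and minimality gives \(v(x^{-1})\le v(x)\). Applying the same argument to \(x^{-1}\) gives the reverse inclusion. Alternatively, \eqref{eq:inverse-S} gives \(v(x)^{-1}=v(x^{-1})\), and this equals \(v(x)\) since \(v(x)\) is a subgroup.

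\textbf{The inequality and the max formula.} If \(v(x)\ne v(y)\), then since \(\cV(G)\) is a chain we may assume \(v(x)<v(y)\) or \(v(y)<v(x)\). The first case gives \(v(xy)=v(y)\) directly. The second case is the same argument with the roles of \(x\) and \(y\) swapped, using the \(yx\)-type statement read with the letters exchanged, namely \(v(xy)=v(x)\). In both cases \(v(xy)=\max\{v(x),v(y)\}\).

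For arbitrary \(x,y\ne\e\) with \(xy\ne\e\), put \(M=\max\{v(x),v(y)\}\), which exists because \(\cV(G)\) is a chain. Then \(M\) is a subgroup containing \(x\) and \(y\), so \(xy\in M\), and minimality gives \(v(xy)\le M\).

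No step is a serious obstacle. The only point needing care is the normality of \(H^-\) in \(H\), which is needed for the coset identity \(yxH^-=yH^-\) as well as for \(xyH^-=yH^-\); this is supplied by \cref{thm:intro-gallai}\textup{(b)}.
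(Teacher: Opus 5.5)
Your proof is correct and follows essentially the same route as the paper: \(x\in H^-\), normality of \(H^-\) in \(H\) gives the coset identities, hence \(xy,yx\in H\setminus H^-\) and minimality forces \(v(xy)=v(yx)=H\). The inverse formula comes from \eqref{eq:inverse-S} together with the subgroup property, and your uniform treatment of the inequality via \(M=\max\{v(x),v(y)\}\) is only a minor streamlining of the paper's case split. One small correction to your closing remark: normality is needed only for \(xyH^-=yH^-\), since \(yxH^-=y(xH^-)=yH^-\) already holds for left cosets once \(x\in H^-\).
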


\begin{proof}
Suppose \(v(x)<v(y)=H.\)
Then \(x\in H^-.\)
Moreover, \(y\in H\setminus H^-.\)
Indeed, if \(y\in H^-\), then \(y\) belongs to some canonical level
\(K<H\), and hence \(v(y)=S(\e,y)\subseteq K<H,\)
contrary to \(v(y)=H\).

Since \(x,y\in H\), both \(xy\) and \(yx\) belong to \(H\).  If
\(xy\in H^-\), then \(y=x^{-1}(xy)\in H^-,\)
a contradiction.  Hence \(xy\in H\setminus H^-.\)
In particular \(xy\ne\e\).  Since \(H\) is strong and contains
\(\e\) and \(xy\), \(v(xy)=S(\e,xy)\subseteq H.\)
If \(v(xy)<H\), then \(xy\in v(xy)\subseteq H^-,\)
again a contradiction.  Thus \(v(xy)=H.\)

Since \(H^-\triangleleft H\) and \(x\in H^-\), \(xyH^- = y(y^{-1}xy)H^- = yH^-.\)

Similarly, if \(yx\in H^-\), then \(y=(yx)x^{-1}\in H^-,\)
a contradiction.  Hence \(yx\in H\setminus H^-,\)
and therefore \(v(yx)=H.\)
Since \(x\in H^-\), \(yxH^-=yH^-.\)

Interchanging \(x\) and \(y\) gives the corresponding conclusion when \(v(y)<v(x).\)
Therefore, whenever \(v(x)\ne v(y)\), \(v(xy)=\max\{v(x),v(y)\}.\)

Now suppose \(v(x)=v(y)=H\)
and \(xy\ne\e.\)
Since \(x,y\in H\), we have \(xy\in H\).  As \(H\) is strong and
contains \(\e\) and \(xy\), \(v(xy)=S(\e,xy)\subseteq H.\)
Thus, for arbitrary \(x,y\ne\e\) with \(xy\ne\e\), \(v(xy)\leq\max\{v(x),v(y)\}.\)

Finally, by \eqref{eq:inverse-S}, \(S(\e,x^{-1}) = S(\e,x)^{-1} = S(\e,x),\)
the last equality holding because \(S(\e,x)\) is a subgroup.  Therefore \(v(x^{-1})=v(x).\)
\end{proof}

\subsection{The two graph kernels}
\label{sec:graph-kernels}

The total and equality quotient types admit intrinsic group-theoretic
subgroups detected by the incomparability and comparability graphs.
For an ordered group \(G\), let \(I(G)(\e)\) be the connected component
of \(\e\) in the incomparability graph \(\Inc(G)\).  We use the following
result of Pouzet and Zaguia, assembled from
\cite[Theorem~15(b) and the accompanying discussion]{PouzetZaguia2019}.

\begin{theorem}[Pouzet--Zaguia]\label{thm:I-normal}
For every ordered group \(G\), \(I(G)(\e)\) is the least subgroup of
\(G\) containing \(\inc(\e)\).  It is a convex normal subgroup, the
quotient \(G/I(G)(\e)\) is totally ordered, and the order on \(G\) is
the lexicographical sum of copies of \(I(G)(\e)\) indexed by the chain
\(G/I(G)(\e)\).  In particular,
\[
 I(G)(\e)=\langle\inc(\e)\rangle.
\]
\end{theorem}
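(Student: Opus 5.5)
The plan is to prove the four assertions of \cref{thm:I-normal} in order.

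Put $C:=I(G)(\e)$ and $J:=\langle\inc(\e)\rangle$. The argument rests on two symmetries of the incomparability graph.

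First, every left translation is an automorphism of $\Inc(G)$, and so is every right translation. Hence $gC$ is the component of $g$, and so is $Cg$. In particular $gC=Cg$ for all $g$.

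Second, inversion is an anti-automorphism of the order, so it preserves incomparability. It is therefore also an automorphism of $\Inc(G)$, and since it fixes $\e$, we get $C^{-1}=C$.

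\textbf{$C$ is a subgroup.} Take $a,b\in C$. Then $aC$ is the component of $a$, which is $C$ because $a\in C$. Hence $ab\in aC=C$, so $C$ is closed under products. Together with $C^{-1}=C$, this makes $C$ a subgroup. From $gC=Cg$ it is normal.

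\textbf{$C=J$.} Since $\inc(\e)\subseteq C$ and $C$ is a subgroup, $J\subseteq C$. For the reverse inclusion, take a path $\e=x_0,x_1,\dots,x_n$ in $\Inc(G)$. Each step gives $x_i\parallel x_{i+1}$, and translating by $x_i^{-1}$ gives $x_i^{-1}x_{i+1}\in\inc(\e)$. Therefore
\[
x_n=\prod_i x_i^{-1}x_{i+1}\in J .
\]

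\textbf{$C$ is a module.} Take $z\notin C$ and $c\in C$. Then $z$ is not adjacent to $c$ in $\Inc(G)$, since otherwise $z$ would lie in the component of $c$. So $z$ is comparable to every element of $C$. If $c\parallel c'$ are both in $C$, they must lie on the same side of $z$: if $z<c$ and $c'<z$, then $c'<c$ by transitivity, a contradiction. Propagating along edges of $\Inc(G)$ inside the connected set $C$ shows that $z$ lies on one fixed side of all of $C$. Thus $C$ is a module, hence convex by \cref{lem:1}. \Cref{convex} then gives the lexicographical sum over the ordered quotient $G/C$.

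\textbf{$G/C$ is a chain.} Distinct cosets contain representatives that are comparable, because elements in different components of $\Inc(G)$ are comparable. By uniformity of the relation between distinct blocks, any two distinct cosets are then comparable in $G/C$.

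The main obstacle is the module step: turning "comparable to every element" into "lies uniformly on one side." The transitivity argument along incomparability edges handles it. This step must be done carefully in the infinite case, but connectivity only uses finite paths, so no extra hypothesis is needed.
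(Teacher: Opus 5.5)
Your proof is correct. Translations and inversion are automorphisms of $\Inc(G)$, so the component of $g$ is both $gC$ and $Cg$. The telescoping product along a path gives $I(G)(\e)\subseteq\langle\inc(\e)\rangle$. Propagating the side of $z$ along incomparability edges inside the connected set $I(G)(\e)$ does yield the module property.

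The paper does not prove this statement itself; it imports it from \cite[Theorem~15(b)]{PouzetZaguia2019}. Your argument is a self-contained proof following the same template the paper uses for the comparability kernel in \cref{prop:C-normal}:
\begin{enumerate}
\item identify the component of $\e$ with the subgroup generated by the differences $x^{-1}y$ along edges;
\item identify the components with the cosets;
\item conclude with \cref{convex}.
\end{enumerate}

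Two steps differ from that template. First, you obtain normality from the fact that $gC$ and $Cg$ are both the component of $g$, rather than from conjugation-invariance of the generating set; either works. Second, and more substantially, for the comparability graph convexity is immediate: if $a\le z\le b$, then $z$ is adjacent to $a$. For the incomparability graph it is not immediate, and you correctly supply it through the module property. That step is the classical fact, valid in every poset, that the components of the incomparability graph are modules that are pairwise comparable as blocks, so the poset is their linear sum.

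Your route makes the paper independent of the external reference. It also shows that the group structure is needed only for the subgroup, normality and generation assertions, while the convexity, chain and lexicographic-sum assertions are pure order theory. The citation keeps the exposition short and ties the result to Pouzet and Zaguia's broader treatment.
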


There is an elementary counterpart for the comparability graph.  Let
\(C(G)(\e)\) denote the connected component of \(\e\) in the
comparability graph \(\Comp(G)\).

\begin{proposition}[Comparability kernel]\label{prop:C-normal}
For every ordered group \(G\),
\[
 C(G)(\e)
 =
 \left\langle
 G\setminus\bigl(\inc(\e)\cup\{\e\}\bigr)
 \right\rangle.
\]
Moreover, \(C(G)(\e)\) is a convex normal subgroup of \(G\), the
quotient \(G/C(G)(\e)\) is equality-ordered, and the order on \(G\) is
the lexicographical sum of copies of \(C(G)(\e)\) indexed by that
antichain.
\end{proposition}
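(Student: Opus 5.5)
Write \(C:=C(G)(\e)\) and \(P:=G\setminus(\inc(\e)\cup\{\e\})\), the set of nonidentity elements comparable with \(\e\). The plan is to identify \(C\) with \(\langle P\rangle\) using translation invariance of the comparability graph, and then check convexity, normality and the module property directly.

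\textbf{Step 1: \(C\) is a subgroup equal to \(\langle P\rangle\).} Left translations are order automorphisms, so they are automorphisms of \(\Comp(G)\). Hence the connected components of \(\Comp(G)\) are permuted by left translation. The element \(x\) is adjacent to \(y\) exactly when \(x^{-1}y\in P\), so \(\Comp(G)\) is the Cayley graph of \(G\) with respect to the set \(P\). The set \(P\) is symmetric, since \(g>\e\) iff \(g^{-1}<\e\). A path from \(\e\) therefore has the form \(\e, p_1, p_1p_2,\dots\) with \(p_i\in P\). This gives \(C=\langle P\rangle\).

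\textbf{Step 2: normality and convexity.} Conjugation is an order automorphism fixing \(\e\), so it preserves \(P\). Hence \(C=\langle P\rangle\) is normal. For convexity, suppose \(a\le x\le b\) with \(a,b\in C\). If \(x=a\) there is nothing to show. Otherwise \(a<x\), so \(a\) and \(x\) are adjacent in \(\Comp(G)\), and \(x\) lies in the component of \(a\), which is \(C\).

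\textbf{Step 3: the module property.} Let \(z\notin C\) and \(c\in C\). Then \(z\) and \(c\) lie in different components of \(\Comp(G)\), so \(z\parallel c\). Thus \(z\) is incomparable with every element of \(C\), and \(C\) is a module. The same argument applies to every coset \(gC\), each of which is a component. By \cref{convex}, the order on \(G\) is the lexicographical sum of copies of \(C\) indexed by \((G/C,\le_C)\). Two distinct cosets have no comparable representatives, so \(G/C\) is equality-ordered, that is, an antichain.

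I expect no real obstacle. The only point needing care is the identification of \(\Comp(G)\) as a Cayley graph, which requires symmetry of \(P\) and left invariance; both are immediate from two-sided invariance.
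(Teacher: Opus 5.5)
Your proof is correct and follows essentially the same route as the paper. Both identify \(\Comp(G)\) as the Cayley graph of \(P\), so the component of \(\e\) is \(\langle P\rangle\). Both get normality from conjugation-invariance of \(P\) and convexity from adjacency to \(a\). Both obtain the equality-ordered lexicographical decomposition from the pairwise incomparability of distinct components, which are the cosets of \(C\), together with \cref{convex}.
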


\begin{proof}
For \(x,y\in G\), the vertices \(x\) and \(y\) are adjacent in
\(\Comp(G)\) exactly when
\[
 x^{-1}y
 \in
 G\setminus\bigl(\inc(\e)\cup\{\e\}\bigr).
\]
Hence the connected component of \(\e\) is precisely the subgroup
generated by this set.

The set \(G\setminus\bigl(\inc(\e)\cup\{\e\}\bigr)\)
is closed under inversion and invariant under conjugation: inversion is
an order anti-automorphism, while conjugation is an order automorphism
fixing \(\e\).  Therefore the subgroup that it generates is normal in
\(G\).  Thus \(C(G)(\e)\triangleleft G.\)

If \(a,b\in C(G)(\e)\) and \(a\leq z\leq b,\)
then either \(z=a\), or \(z\) is adjacent to \(a\) in \(\Comp(G)\).
In either case \(z\in C(G)(\e)\).  Thus \(C(G)(\e)\) is convex.

If \(x\notin C(G)(\e)\) and \(c\in C(G)(\e)\), then \(x\) and \(c\)
cannot be comparable, since otherwise they would be adjacent and hence
belong to the same connected component.  Thus distinct comparability
components are pairwise incomparable.

Since left translations are automorphisms of \(\Comp(G)\), its connected
components are precisely the left cosets of \(C(G)(\e)\); by normality
these are also its right cosets.  Since \(C(G)(\e)\) is convex and
normal, \cref{convex} equips \(G/C(G)(\e)\) with its quotient order.
Distinct cosets are pairwise incomparable, so this quotient order is the
equality order.  Each coset is order-isomorphic to \(C(G)(\e)\), and
therefore the order on \(G\) is the lexicographical sum of copies of
\(C(G)(\e)\) indexed by that antichain.
\end{proof}

Applied to a canonical level \(H\in\cV(G)\), these two kernels refine the
total and equality alternatives in \cref{thm:intro-gallai}.  If \(H/H^-\)
is totally ordered, then \(\inc_H(\e)\subseteq H^-,\)
and hence, by \cref{thm:I-normal} applied to the ordered group induced on
\(H\), \(I(H)(\e)\subseteq H^-.\)
If \(H/H^-\) is equality-ordered, then every nonidentity element of \(H\)
comparable with \(\e\) lies in \(H^-\).  Therefore
\cref{prop:C-normal}, again applied inside \(H\), gives \(C(H)(\e)\subseteq H^-.\)

In the \(N\)-free case, the density of the \(0/1\)-tree together with the
leading-value law upgrades both inclusions to equalities; this is
\cref{prop:canonical-spine} below.

\section{The \texorpdfstring{\(N\)}{N}-free case}
\label{sec:nfree}

For \(N\)-free ordered groups, \cref{thm:HPW-tree} gives the robust
identity branch its dense \(0/1\)-valuation.  The following proposition
translates this structure into ordered-group language and identifies the
canonical subgroups \(H^-\) intrinsically.

\begin{proposition}[The canonical \(0/1\)-spine and the subgroups \(H^-\)]
\label{prop:canonical-spine}
Let \(G\) be an \(N\)-free ordered group.  Then every canonical quotient
\(H/H^-\) is totally ordered or equality-ordered.  Define
\[
 \lambda(H)=
 \begin{cases}
  \Ctype,&H/H^-\text{ is totally ordered},\\
  \Atype,&H/H^-\text{ is equality-ordered}.
 \end{cases}
\]
Then the colouring
\[
 \lambda:\cV(G)\longrightarrow\{\Ctype,\Atype\}
\]
is reduced, and
\begin{equation}
 H^-=
 \begin{cases}
  I(H)(\e),&\lambda(H)=\Ctype,\\
  C(H)(\e),&\lambda(H)=\Atype,
 \end{cases}
 \label{eq:intrinsic-lower-subgroup}
\end{equation}
where the two components are computed in the ordered group induced on
\(H\).  Equivalently,
\[
 H^-=\langle\inc_H(\e)\rangle
 \quad\text{at a \(\Ctype\)-level},
\]
and
\[
 H^-=
 \left\langle
 \{h\in H\setminus\{\e\}:h\text{ is comparable with }\e\}
 \right\rangle
 \quad\text{at an \(\Atype\)-level}.
\]
Moreover, for \(g\ne\e\), with \(H=S(\e,g)\),
\begin{equation}
 g>\e
 \quad\Longleftrightarrow\quad
 \lambda(H)=\Ctype
 \text{ and }gH^->H^-
 \text{ in }H/H^-.
 \label{eq:canonical-leading}
\end{equation}
If \(\lambda(H)=\Atype\), then \(g\parallel\e\).
\end{proposition}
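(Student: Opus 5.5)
First, \cref{thm:intro-gallai}(b) says each $Q_H$ is prime, totally ordered, or equality-ordered. I will exclude the prime case using $N$-freeness. Since $\Comp(G)$ is a cograph, \cref{thm:HPW-tree} says the Gallai quotient of every nonsingleton robust module is complete or independent as a graph. Alternatively, a prime quotient $Q_H$ with at least three elements contains an induced $N$, and choosing block representatives lifts it to an induced $N$ in $G$. The two-element case is a chain or an antichain, so it is not prime in the relevant sense. Hence $\lambda$ is well defined.

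Reducedness follows from \cref{prop:HPW-alternation}. Let $K<H$ in $\cV(G)$ have the same colour. They are nested nonsingleton robust modules whose quotients have the same nonprime kind. The proposition gives a robust $C$ with $K\subset C\subset H$ whose quotient is of a different kind, hence not prime and therefore of the opposite kind. By \cref{lem:robust-identity-branch}, $C\in\cV(G)$, since $\e\in K\subset C$.

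The displayed formula for $g>\e$ is \eqref{eq:general-leading} together with the observation that $Q_H$ equality-ordered forces $gH^-\parallel H^-$, hence $g\parallel\e$ by uniformity between Gallai blocks.

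The main work is \eqref{eq:intrinsic-lower-subgroup}. The paragraph preceding \cref{sec:nfree} already gives $I(H)(\e)\subseteq H^-$ at $\Ctype$-levels and $C(H)(\e)\subseteq H^-$ at $\Atype$-levels. For the reverse inclusion at a $\Ctype$-level $H$, I show that every $y\in H^-\setminus\{\e\}$ lies in $\langle\inc_H(\e)\rangle$. Put $L=v(y)<H$.

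If $\lambda(L)=\Atype$, then $y\parallel\e$, so $y\in\inc_H(\e)$.

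If $\lambda(L)=\Ctype$, reducedness gives an $\Atype$-level $M$ with $L<M<H$. Pick $z$ with $v(z)=M$, so $z\parallel\e$. By \cref{lem:leading-value}, $v(zy)=M$, because $v(y)<v(z)$. Hence $zy\parallel\e$ too, and $y=z^{-1}(zy)\in\langle\inc_H(\e)\rangle$.

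The $\Atype$ case is symmetric. Let $y\in H^-$ with $v(y)=L$. If $\lambda(L)=\Ctype$, then $y$ is comparable with $\e$. Otherwise choose a $\Ctype$-level $M$ with $L<M<H$ and $z$ with $v(z)=M$. Then $z$ and $zy$ both have value $M$ of type $\Ctype$, so both are comparable with $\e$, and $y=z^{-1}(zy)$ lies in the generated subgroup.

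The step I expect to need care is the reducedness argument. One must check that \cref{prop:HPW-alternation} applies to posets through the Gallai--Kelly correspondence, and that the intermediate $C$ is robust and contains $\e$. Both follow from the cited results. Existence of elements $z$ with $v(z)=M$ is immediate since $M=S(\e,g)$ for some $g$. Finally, the two generator descriptions follow from \cref{thm:I-normal} and \cref{prop:C-normal} applied to $H$.
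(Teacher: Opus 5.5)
Your proof is correct and follows essentially the paper's route: you exclude the prime case via \cref{thm:HPW-tree}, get reducedness from the Hahn--Pouzet--Woodrow alternation property together with \cref{lem:robust-identity-branch}, and prove the reverse inclusions by inserting an opposite-type level \(M\) and applying \cref{lem:leading-value}. Citing \cref{prop:HPW-alternation} instead of density in \cref{thm:HPW-tree}, and writing \(y=z^{-1}(zy)\) in the generated subgroup instead of exhibiting a path to \(\e\) in the (in)comparability graph of \(H\), are only cosmetic differences. Your side remark that a prime quotient with at least three elements contains an induced \(N\) would need more than finite cograph theory in the infinite case, but your main argument does not depend on it.
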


\begin{proof}
By \cref{thm:HPW-tree}, the Gallai quotient at a nonsingleton robust
node is a chain at a \(1\)-node and an antichain at a \(0\)-node.
By \cref{thm:intro-gallai}\textup{(b)}, these quotients carry their
natural ordered-group structures.  Thus every canonical quotient is
totally ordered or equality-ordered.  By
\cref{lem:robust-identity-branch}, the nonsingleton robust nodes on the
branch through \(\e\) are exactly the members of \(\cV(G)\).

We first prove that the colouring is reduced.  Suppose that
\(H<K\) are canonical levels of the same type.  Their corresponding
robust nodes have the same \(0/1\)-label.  Density in
\cref{thm:HPW-tree} therefore gives a robust module \(L\) with \(H<L<K\)
of the opposite label.  Since \(\e\in H\subset L\),
\cref{lem:robust-identity-branch} gives \(L\in\cV(G)\).  Hence
\(\lambda\) is reduced.

Fix \(H\in\cV(G)\), and put
\[
 \Gamma_H=
 \begin{cases}
  \Inc(H),&\lambda(H)=\Ctype,\\
  \Comp(H),&\lambda(H)=\Atype.
 \end{cases}
\]
The preceding graph-kernel discussion already established that the
component of \(\e\) in \(\Gamma_H\) is contained in \(H^-\).  We prove
the opposite inclusion.

Let \(x\in H^-\setminus\{\e\}, \qquad L:=v(x)<H.\)
If \(\lambda(L)\ne\lambda(H)\), then the general leading-layer rule
shows that \(x\) is adjacent to \(\e\) in \(\Gamma_H\), and hence lies
in its component.

Suppose instead that \(\lambda(L)=\lambda(H).\)
Since the colouring is reduced, there is \(M\in\cV(G)\) such that \(L<M<H, \qquad \lambda(M)\ne\lambda(H).\)
Choose \(y\in M\setminus M^-\).  Then \(v(y)=M\), so the general
leading-layer rule shows that \(y\) is adjacent to \(\e\) in
\(\Gamma_H\).  Moreover, \(v(x^{-1})=v(x)=L<M=v(y),\)
and therefore \cref{lem:leading-value} gives \(v(x^{-1}y)=M.\)
Again by the general leading-layer rule, \(x^{-1}y\) has the relation
to \(\e\) corresponding to \(\Gamma_H\); equivalently, \(x\) is
adjacent to \(y\) in \(\Gamma_H\).  Hence \(\e-y-x\)
is a path in \(\Gamma_H\), and \(x\) lies in the component of \(\e\).
Consequently
\[
 H^-=
 \begin{cases}
  I(H)(\e),&\lambda(H)=\Ctype,\\
  C(H)(\e),&\lambda(H)=\Atype.
 \end{cases}
\]

The equivalent generated-subgroup descriptions now follow from
\cref{thm:I-normal,prop:C-normal}.

Finally, let \(g\ne\e\) and put \(H=S(\e,g)\).  The equivalence
\eqref{eq:canonical-leading} is precisely the general leading-layer
rule \eqref{eq:general-leading}.  If \(\lambda(H)=\Atype\), then
\(H/H^-\) is equality-ordered and \(gH^-\ne H^-,\)
so \(g\parallel\e\).
\end{proof}

For the converse, we axiomatize the subgroup-chain structure from which the
order is reconstructed.

\begin{definition}[Admissible two-coloured subgroup chain]
\label{def:admissible-chain}
An admissible two-coloured subgroup chain on a nontrivial group \(G\)
consists of a chain \(\cH\) of subgroups together with a map
\[
 \lambda:\cH\longrightarrow\{\Ctype,\Atype\}
\]
satisfying:
\begin{enumerate}[label=\textup{(D\arabic*)}]
\item for every \(g\ne\e\), the set
\[
 \{H\in\cH:g\in H\}
\]
has a least member, denoted \(v(g)\);

\item for every \(H\in\cH\),
\[
 H^-:=\{\e\}\cup\bigcup_{K<H}K
\]
is a proper normal subgroup of \(H\);

\item if \(\lambda(H)=\Ctype\), then \(H/H^-\) is equipped with a
compatible total group order, while if \(\lambda(H)=\Atype\), then
\(H/H^-\) has the equality order;

\item conjugation by every element of \(G\) permutes \(\cH\), preserves
inclusion and the colouring \(\lambda\), and induces an order
isomorphism between the corresponding type \(\Ctype\) quotients.
\end{enumerate}
The chain is \emph{reduced} if, whenever \(K<H\) have the same type,
there is \(L\) with
\[
 K<L<H
\]
of the opposite type.
\end{definition}

For \(g\ne\e\), with \(H=v(g)\), declare
\begin{equation}
 g>\e
 \quad\Longleftrightarrow\quad
 \lambda(H)=\Ctype
 \text{ and }gH^->H^-
 \text{ in }H/H^-.
 \label{eq:constructed-positive}
\end{equation}
If \(\lambda(H)=\Atype\), declare \(g\parallel\e\).  For arbitrary
\(x,y\in G\), set
\begin{equation}
 x<y
 \quad\Longleftrightarrow\quad
 x^{-1}y>\e.
 \label{eq:constructed-order}
\end{equation}

The link with the cograph tree is the following elementary coset
construction.

\begin{lemma}[The coset meet-tree]\label{lem:coset-tree}
Let \(\cH\) be a reduced admissible two-coloured subgroup chain on
\(G\).  Let
\[
 \mathcal T(\cH)
 :=
 \{\{g\}:g\in G\}
 \cup
 \{gH:g\in G,\ H\in\cH\},
\]
ordered by reverse inclusion, and label a nonsingleton node \(gH\) by
\(1\) when \(\lambda(H)=\Ctype\) and by \(0\) when
\(\lambda(H)=\Atype\).  Then \(\mathcal T(\cH)\) is a densely
\(0/1\)-valued ramified meet-tree.  For distinct \(x,y\in G\), if
\[
 H=v(x^{-1}y),
\]
then
\begin{equation}
 \{x\}\wedge\{y\}=xH.
 \label{eq:coset-meet}
\end{equation}
\end{lemma}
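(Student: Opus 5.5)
The plan is to verify the meet-tree axioms for the family $\mathcal T(\cH)$ of cosets and singletons directly, using only (D1), (D2) and the reducedness of $\cH$; colours and the order of the type $\Ctype$ quotients play no role beyond supplying the labels.

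\textbf{Step 1: nesting of nodes.} We show that any two nodes of $\mathcal T(\cH)$ are either disjoint or comparable under inclusion. Take $xH$ and $yK$ with $H\le K$ in the chain, and suppose they meet. Then $xH\subseteq yK$, because $xH=zH$ for any common point $z$, and $zH\subseteq zK=yK$. Singletons are handled trivially. Consequently the nodes containing a fixed $\{x\}$ form a chain under inclusion, namely $\{x\}$ together with $\{xH:H\in\cH\}$. Under reverse inclusion this is the set of predecessors of the maximal node $\{x\}$, so $\mathcal T(\cH)$ is a tree.

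\textbf{Step 2: the meet formula \eqref{eq:coset-meet}.} Let $x\ne y$ and put $g=x^{-1}y$ and $H=v(g)$, which exists by (D1). A coset $xK$ contains $y$ exactly when $g\in K$. By (D1) the least such $K$ is $H$, so $xH$ is the smallest node containing both $x$ and $y$. In reverse inclusion this is the greatest common lower bound, that is, the meet. Every node lies below some maximal singleton, so this gives meets for arbitrary pairs of nodes. Where the cited definition of a meet-tree demands it, we also check that the nodes below $\{x\}$ form a chain, which is the content of Step 1.

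\textbf{Step 3: ramification and the labels.} We verify ramification in the sense of Hahn--Pouzet--Woodrow: every nonsingleton node $xH$ is the meet of two maximal nodes. By (D2), $H^-$ is a proper subgroup of $H$, so we may choose $h\in H\setminus H^-$. Then $v(h)=H$, since any member of $\cH$ containing $h$ and lying strictly below $H$ would sit inside $H^-$. Hence $\{x\}\wedge\{xh\}=xH$. Distinct nodes $xH\ne xK$ have $H\ne K$, and because $H$ determines the node through the coset, the labelling is well defined.

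\textbf{Step 4: density.} Let $a<b$ be nonsingleton nodes, say $a=xK\supsetneq b=xH$, so that $H<K$. We need a node $c$ with $a<c\le b$ and a label different from that of $a$. If $\lambda(H)\ne\lambda(K)$, take $c=b$. Otherwise reducedness gives $L\in\cH$ with $H<L<K$ of the opposite type, and we take $c=xL$.

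The main obstacle is matching precisely the paper's notion of ``ramified meet-tree'', in particular whether meets are required for all pairs or only for maximal elements. The argument will be arranged so that Steps 1--3 cover both readings.
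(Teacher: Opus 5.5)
Your proposal is correct and follows essentially the same route as the paper: cosets nest because \(\cH\) is a chain, the leaf meet is \(x\,v(x^{-1}y)\) by (D1), ramification comes from choosing \(t\in H\setminus H^-\) by (D2), and density comes from reducedness. The only variation is that you derive meets of arbitrary nodes from leaf meets via the tree property, where the paper computes the meet \(xL\), \(L=v(x^{-1}y)\), of two disjoint cosets \(xH\), \(yK\) directly; one wording slip to fix is that well-definedness of the labels needs the converse of what you wrote, namely that a node determines its level (\(xH=yK\) forces \(H=K\), since \(H=a^{-1}(xH)\) for any \(a\in xH\)), and this is also what gives \(xL\ne xK\) in your density step.
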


\begin{proof}
If two cosets of members of \(\cH\) meet, then, because their
subgroups are comparable, the cosets are nested.  Hence
\(\mathcal T(\cH)\) is a forest in the reverse-inclusion order.

We verify that every two nodes have a meet.  The case of two nested
nodes is immediate.  Suppose that \(xH\) and \(yK\) are disjoint; the
same argument covers a singleton by regarding its level formally as
\(\{\e\}\).  Put \(L:=v(x^{-1}y).\)
We must have \(H\leq L\).  Indeed, if \(L<H\), then
\(x^{-1}y\in H\), and hence \(y\in xH\), a contradiction.  Likewise
\(K\leq L\): if \(L<K\), then \(y^{-1}x=(x^{-1}y)^{-1}\in K,\)
so \(x\in yK\), again a contradiction.

Thus \(xL\) contains both \(xH\) and \(yK\).  If a coset \(zM\)
contains both, then \(x,y\in zM\), so \(x^{-1}y\in M\)
and hence \(L\leq M\).  Since \(x\in zM\), we have \(zM=xM,\)
and therefore \(xL\subseteq zM.\)
Thus \(xL\) is the smallest coset, under ordinary inclusion, containing
the two nodes, and hence their meet in the reverse-inclusion order.  In
particular, for leaves \(x\ne y\) this gives
\eqref{eq:coset-meet}.

Every nonsingleton node \(gH\) is itself the meet of two leaves.
Indeed, by (D2) choose \(t\in H\setminus H^-.\)
Then (D1) gives \(v(t)=H\), and hence \(gH=\{g\}\wedge\{gt\}.\)
Thus the meet-tree is ramified.

The label of a coset is well defined.  If two nonempty left cosets are
equal, translating their common set by the inverse of any one of its
points recovers the underlying subgroup, so the two levels are equal.

Finally, let two nested nonsingleton nodes have the same label.  Their
underlying levels have the same type.  Reducedness supplies an
intermediate level of the opposite type, and the unique coset of that
level containing the smaller node is an intermediate node with the
opposite label.  Hence the valuation is dense.
\end{proof}

\begin{theorem}[Leading-layer representation]
\label{thm:leading-representation}
Let \(\cH\) be a reduced admissible two-coloured subgroup chain on a
nontrivial group \(G\).  Then
\eqref{eq:constructed-positive}--\eqref{eq:constructed-order} define an
\(N\)-free partial order on \(G\) invariant under multiplication on
both sides.  Its comparability graph has valued robust-module tree
\(\mathcal T(\cH)\).  In particular, every \(H\in\cH\) is a convex
strong module,
\begin{equation}
 S(\e,g)=v(g)
 \qquad(g\ne\e),
 \label{eq:canonical-value}
\end{equation}
and the quotient order on \(H/H^-\) is the prescribed total or equality
order.

Conversely, the canonical chain of every nontrivial \(N\)-free ordered
group is a reduced admissible two-coloured subgroup chain, and the
leading-layer construction recovers the original order.
\end{theorem}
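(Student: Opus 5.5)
The plan has two directions: build an order from a chain and check its properties, then read a chain off an \(N\)-free ordered group.

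\textbf{Forward direction.} I will first show that \eqref{eq:constructed-positive}--\eqref{eq:constructed-order} define a two-sided invariant partial order. I take \(P:=\{g\ne\e:g>\e\}\) and verify four things.
\begin{enumerate}[label=\textup{(\roman*)}]
\item \(P\cap P^{-1}=\varnothing\). Since \(v(g^{-1})=v(g)\), and \(g^{-1}H^-\) is the inverse of \(gH^-\) in the totally ordered quotient, at most one of \(g,g^{-1}\) is positive.
\item \(PP\subseteq P\). Take \(x,y\in P\) and compare \(v(x)\) with \(v(y)\). I first prove the leading-value law of \cref{lem:leading-value} directly from (D1),(D2): if \(v(x)<v(y)=H\), then \(x\in H^-\), so \(xy,yx\in H\setminus H^-\), \(v(xy)=H\) and \(xyH^-=yH^-\). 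Hence \(xy\in P\) because \(y\in P\). If \(v(x)=v(y)=H\), then both are of type \(\Ctype\) and \(xH^-,yH^->H^-\). Their product is \(>H^-\) in the total order, so \(xy\notin H^-\), \(v(xy)=H\) and \(xy\in P\).
\item \(gPg^{-1}=P\). This is exactly (D4): conjugation preserves \(v\), the colouring, and the \(\Ctype\)-quotient orders.
\end{enumerate}
Invariance under left translation is built into \eqref{eq:constructed-order}, and right invariance follows from conjugation invariance of \(P\). Together these give a two-sided invariant order.

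\textbf{Identifying the tree.} By the computation in (ii), for \(x\ne y\) we have \(x\) comparable with \(y\) iff \(\lambda(v(x^{-1}y))=\Ctype\). By \eqref{eq:coset-meet} this is iff the meet \(\{x\}\wedge\{y\}\) in \(\mathcal T(\cH)\) has label \(1\). \cref{lem:coset-tree} gives that \(\mathcal T(\cH)\) is a densely valued ramified meet-tree. The converse part of \cref{thm:HPW-tree} then shows that \(\Comp(G)\) is a cograph with valued robust-module tree \(\mathcal T(\cH)\); so the order is \(N\)-free.

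I expect the main obstacle to be identifying robust modules with cosets. The isomorphism furnished by \cref{thm:HPW-tree} must be the identity on leaves, and I need it to send the node \(xH\) to the actual robust module \(S(x,y)\). I would argue this from the explicit reconstruction: robust modules of the cograph are exactly the sets of leaves below nodes, since \(S(x,y)\) corresponds to \(\{x\}\wedge\{y\}\). This gives \(S(\e,g)=v(g)\) via \eqref{eq:coset-meet}, i.e. \eqref{eq:canonical-value}. Posets and their comparability graphs share strong modules, so each \(H\in\cH\) is robust, hence strong and convex by \cref{lem:1}.

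It remains to check the quotient orders. Then \(H^-\) as defined in \cref{def:admissible-chain} coincides with the canonical \(H^-\). The quotient order on \(H/H^-\) is the prescribed one by \eqref{eq:constructed-positive} and \cref{convex}: a coset is positive iff some, equivalently each, of its representatives is positive.

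\textbf{Converse.} For a nontrivial \(N\)-free ordered group, I take \(\cH=\cV(G)\) coloured by \(\lambda\), and check the axioms:
\begin{itemize}
\item (D1) holds with \(v(g)=S(\e,g)\), the least level containing \(g\), by minimality.
\item (D2) and (D3) come from \cref{thm:intro-gallai}(b) and \cref{prop:canonical-spine}.
\item (D4) is the equivariance in \cref{thm:intro-gallai}(c).
\item Reducedness is in \cref{prop:canonical-spine}.
\end{itemize}
Recovery of the original order is \eqref{eq:canonical-leading} together with \(x\le y\iff\e\le x^{-1}y\).
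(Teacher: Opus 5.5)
Your proposal is correct and follows the paper's proof essentially step for step: the positive-cone check via the leading-value computation from (D1)--(D2), matching the comparability graph to the graph of the coset tree \(\mathcal T(\cH)\) and applying the converse half of \cref{thm:HPW-tree}, reading \(S(\e,g)=v(g)\) off \eqref{eq:coset-meet}, checking the quotient order on \(H/H^-\) coset by coset, and deriving the converse from \cref{thm:intro-gallai} and \cref{prop:canonical-spine}. The point you flag as delicate, that the Hahn--Pouzet--Woodrow isomorphism must be the natural one sending the node \(gH\) to the set \(gH\), is treated in the paper at the same level of detail (``under the natural identification of the leaves with \(G\)''), so nothing is missing relative to the paper's argument.
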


\begin{proof}
Let \(P\) be the set of elements declared strictly positive.  We first
record the leading-value calculation.  Suppose \(v(x)<v(y)=H.\)
Then \(x\in H^-, \qquad y\in H\setminus H^-.\)
Since \(H^-\) is a subgroup, neither \(xy\) nor \(yx\) lies in \(H^-\);
otherwise \(y\in H^-\).  Thus \(xy,yx\in H\setminus H^-,\)
and therefore \(v(xy)=v(yx)=H.\)
Since \(H^-\triangleleft H\), \(xyH^-=yH^-, \qquad yxH^-=yH^-.\)
Consequently, whenever \(v(x)\ne v(y)\),
\begin{equation}
 v(xy)=\max\{v(x),v(y)\},
 \label{eq:constructed-leading-value}
\end{equation}
and the leading coset of the product is the leading coset of the factor
with larger value.

Also,
\begin{equation}
 v(x^{-1})=v(x),
 \label{eq:constructed-inverse-value}
\end{equation}
because \(x\) and \(x^{-1}\) belong to exactly the same members of the
subgroup chain \(\cH\).

We verify the positive-cone conditions.  Let \(x,y\in P\).  If
\(v(x)\ne v(y)\), then
\eqref{eq:constructed-leading-value} shows that \(xy\) has the leading
coset, and hence the sign, of the factor with larger value.

Suppose \(v(x)=v(y)=H.\)
Then \(H\) has type \(\Ctype\), and the two nonidentity cosets \(xH^-, \qquad yH^-\)
are positive in the totally ordered group \(H/H^-\).  Their product \(xyH^-\)
is therefore positive, and in particular is not the identity coset.
Hence \(xy\in H\setminus H^-, \qquad v(xy)=H,\)
and \(xy\in P\).  Thus \(PP\subseteq P.\)

By \eqref{eq:constructed-inverse-value}, the leading coset of \(g^{-1}\)
is the inverse of the leading coset of \(g\).  Hence \(P\cap P^{-1}=\varnothing.\)
Condition (D4) gives \(aPa^{-1}=P \qquad(a\in G).\)
Therefore \eqref{eq:constructed-order} defines a strict partial order.
Left invariance is immediate, and conjugation invariance of \(P\) gives
right invariance.

Consider now the coset tree \(\mathcal T(\cH)\) of
\cref{lem:coset-tree}.  For distinct \(x,y\in G\), let \(H=v(x^{-1}y).\)
At a type \(\Ctype\) level, the nonidentity coset \(x^{-1}yH^-\)
is either positive or negative in the prescribed total order, whereas
at a type \(\Atype\) level it is incomparable with the identity coset.
Hence
\[
 x\text{ and }y\text{ are comparable}
 \quad\Longleftrightarrow\quad
 \lambda(H)=\Ctype
 \quad\Longleftrightarrow\quad
 \tau(\{x\}\wedge\{y\})=1.
\]
Thus the comparability graph of the constructed order is exactly the
graph associated with the densely valued tree \(\mathcal T(\cH)\).
By \cref{thm:HPW-tree}, it is a cograph and its valued robust-module
tree is naturally isomorphic to \(\mathcal T(\cH)\).  Hence the order
is \(N\)-free.

Under the natural identification of the leaves with \(G\), the nodes
\(gH\) correspond to the robust modules of the constructed
comparability graph.  Since a poset and its comparability graph have the
same strong modules, they are also the corresponding robust modules of
the constructed order.  In particular every \(H\in\cH\) is strong,
hence convex by \cref{lem:1}.  Moreover, for \(g\ne\e\),
\eqref{eq:coset-meet} with \(x=\e\) shows that the least robust module
containing \(\e\) and \(g\) is \(v(g)\).  Thus \(S(\e,g)=v(g),\)
proving \eqref{eq:canonical-value}.

It remains only to identify the quotient order.  If \(x,y\in H\) lie
in distinct \(H^-\)-cosets, then \(x^{-1}y\in H\setminus H^-,\)
so \(v(x^{-1}y)=H.\)
Therefore \eqref{eq:constructed-positive} gives the prescribed total
order on \(H/H^-\) at a \(\Ctype\)-level, while distinct cosets are
incomparable at an \(\Atype\)-level.

Conversely, let \(G\) be a nontrivial \(N\)-free ordered group.  By
\cref{thm:intro-gallai}, its canonical chain satisfies (D1), (D2), and
the conjugation requirements in (D4).  By
\cref{prop:canonical-spine}, its quotients have the prescribed two
types and its colouring is reduced.  Thus it is a reduced admissible
two-coloured subgroup chain, and \eqref{eq:canonical-leading} recovers
the original order.
\end{proof}

\begin{proof}[Proof of \cref{thm:intro-characterisation}]
For an \(N\)-free ordered group,
\cref{prop:canonical-spine,thm:intro-gallai} show that the canonical
chain is a reduced admissible two-coloured subgroup chain, and the
leading-layer rule recovers the given order.  Conversely,
\cref{thm:leading-representation} shows that every reduced admissible
two-coloured subgroup chain yields an \(N\)-free ordered group.
Its identity \eqref{eq:canonical-value} identifies the prescribed
levels with the canonical values \(S(\e,g)\), showing that the
prescribed chain is exactly the canonical chain.
\end{proof}

\begin{proof}[Proof of \cref{thm:intro-subgroup-restriction}]
The assertion is immediate for \(K=\{\e\}\), so assume that \(K\ne\{\e\}.\)
Write
\[
 v(k):=S_G(\e,k)
 \qquad(k\in K\setminus\{\e\}),
\]
and let \(\mathcal A_K\), \(\mathcal B_K\), and \(M_B\) be as in the
statement.  We order \(\mathcal B_K\) by the order inherited from the
convex blocks of the chain \(\mathcal A_K\).

We first show that every \(M_B\) is a subgroup of \(K\).  Take
\(x,y\in M_B\).  If \(x=\e\), then \(x^{-1}y=y\in M_B.\)
If \(y=\e\), then \(x^{-1}y=x^{-1},\)
and \cref{lem:leading-value} gives \(v(x^{-1})=v(x),\)
so \(x^{-1}\in M_B\).  If \(x=y\), there is nothing to prove.

Otherwise \(x,y\ne\e\), and both lie in the larger, under inclusion, of
their ambient value subgroups.  By \cref{lem:leading-value}, \(v(x^{-1}y) \leq \max\{v(x),v(y)\}.\)
Since \(x^{-1}y\in K\setminus\{\e\}\), its value is active, and its
active block is at most \(B\).  Hence \(x^{-1}y\in M_B.\)
Thus \(M_B\leq K.\)

For \(B\in\mathcal B_K\), put
\begin{equation}
 M_B^-:=
 \{\e\}\cup\bigcup_{C<B}M_C
 =
 \{\e\}\cup
 \{k\in K\setminus\{\e\}:[v(k)]<B\}.
 \label{eq:restricted-lower-subgroup}
\end{equation}
This is a subgroup, being the union of a chain of subgroups, and it is
proper in \(M_B\) because the block \(B\) is active.

We claim that \(M_B^-\) is a module of the induced order on \(M_B\).
Let \(z\in M_B\setminus M_B^-, \qquad h\in M_B^-.\)
If \(h=\e\), there is nothing to prove.  Otherwise \([v(h)]<B=[v(z)],\)
and therefore \(v(h)<v(z).\)
By \cref{lem:leading-value}, \(h^{-1}z\,v(z)^- = z\,v(z)^-.\)
Thus \(h^{-1}z\) and \(z\) have the same leading coset.  If the block
\(B\) has type \(\Ctype\), they therefore have the same sign relative
to \(\e\); if \(B\) has type \(\Atype\), both are incomparable with
\(\e\).  Equivalently,
\[
 h<z\quad\Longleftrightarrow\quad\e<z,
 \qquad
 z<h\quad\Longleftrightarrow\quad z<\e.
\]
Hence every point of \(M_B\setminus M_B^-\) sees all of \(M_B^-\)
uniformly in both directions.  Thus \(M_B^-\) is a module of \(M_B\),
and \cref{lem:1} gives that it is convex.

We prove normality next.  Conjugation by any \(a\in K\) acts on the
ambient canonical chain by a type-preserving order automorphism, and
\[
 v(aka^{-1})
 =
 a\,v(k)\,a^{-1}
 \qquad(k\ne\e).
\]
Hence it preserves \(\mathcal A_K\) and permutes its maximal convex
same-type blocks.

Fix \(a\in M_B\).  If \(a=\e\), there is nothing to prove.  If \([v(a)]<B,\)
then \(v(a)<H\) for every \(H\in B\), so \(a\in H\) and therefore \(aHa^{-1}=H \qquad(H\in B).\)
If instead \([v(a)]=B,\)
then \(a\in v(a)\), whence \(av(a)a^{-1}=v(a).\)
A type-preserving order automorphism of \(\mathcal A_K\) fixing one
member of the maximal convex same-type block \(B\) fixes \(B\)
setwise.  Thus in either case conjugation by \(a\) fixes \(B\) and
preserves the blocks below \(B\).  By
\eqref{eq:restricted-lower-subgroup}, \(aM_B^-a^{-1}=M_B^-.\)
Hence \(M_B^-\triangleleft M_B.\)

We identify the quotient type.  Let \(xM_B^-\ne yM_B^-\)
and put \(d:=x^{-1}y.\)
Then \(d\in M_B\setminus M_B^-,\)
so \([v(d)]=B.\)
If \(B\) has type \(\Atype\), then \(d\parallel\e\), and hence
\(x\parallel y\).  Thus distinct \(M_B^-\)-cosets are pairwise
incomparable.  If \(B\) has type \(\Ctype\), then \(d\) is comparable
with \(\e\), and hence \(x\) and \(y\) are comparable.  Since
\(M_B^-\) is a convex normal subgroup-module, the quotient order is
well defined; it is equality-ordered in the first case and totally
ordered in the second.  Therefore \(M_B/M_B^-\)
has exactly the common type of the ambient levels in \(B\).

The chain \(\mathcal M_K:=\{M_B:B\in\mathcal B_K\}\)
is reduced.  Indeed, if two distinct blocks of the same type had no
block of the opposite type between them, all intervening blocks would
have that same type, and their union would be a larger convex
constant-type subset of \(\mathcal A_K\), contradicting maximality.

Every \(k\in K\setminus\{\e\}\) belongs to the least member \(M_{[v(k)]}\)
of \(\mathcal M_K\), so (D1) holds.  The preceding arguments give
(D2) and (D3).

For (D4), let \(a\in K\).  The type-preserving order automorphism \(H\longmapsto aHa^{-1}\)
of \(\mathcal A_K\) induces an order- and type-preserving permutation
of \(\mathcal B_K\).  Writing \(a\cdot B:=\{aHa^{-1}:H\in B\},\)
we have directly from the definitions
\[
 aM_Ba^{-1}=M_{a\cdot B},
 \qquad
 aM_B^-a^{-1}=M_{a\cdot B}^-.
\]
Since conjugation is an order automorphism of \(K\), it induces the
required order isomorphisms on the type \(\Ctype\) quotients.  Hence
\(\mathcal M_K\) is a reduced admissible two-coloured subgroup chain.

Finally, let \(k\ne\e\) and put \(B:=[v(k)].\)
Then \(k\in M_B\setminus M_B^-,\)
and the quotient order just described records exactly the original
relation of \(k\) to \(\e\).  Therefore the leading-layer order
defined by \(\mathcal M_K\) is precisely the order induced on \(K\).
Applying \cref{thm:leading-representation} to this admissible chain and
using \eqref{eq:canonical-value} gives
\[
 \cV(K)
 =
 \mathcal M_K
 =
 \{M_B:B\in\mathcal B_K\}.
\]

If a block \(B\) has no largest ambient active level, then \(M_B\) is
genuinely the union subgroup in \eqref{eq:intro-restricted-level}; this is
the limit case absent from the finite pruning argument.
\end{proof}

\begin{corollary}[Normal subgroup-module quotients]
\label{cor:nfree-modular-quotient}
Let \(G\) be an \(N\)-free ordered group and let
\(K\triangleleft G\) be a proper subgroup-module.  If \(K\) is strong,
its quotient decomposition is given by
\cref{cor:quotient-compatibility}.  Otherwise let \(H\in\cV(G)\) be
the unique level with
\[
 H^-<K<H.
\]
Then \(G/K\) is \(N\)-free and
\[
 \cV(G/K)
 =
 \{H/K\}
 \cup
 \{L/K:L\in\cV(G),\ H<L\}.
\]
The first factor \(H/K\) has the same
\(\Ctype/\Atype\)-type as \(H/H^-\), and every canonical factor
strictly above it is naturally order-isomorphic to the corresponding
factor of \(G\).
\end{corollary}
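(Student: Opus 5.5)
The plan is to build an explicit reduced admissible two-coloured subgroup chain on \(G/K\) whose leading-layer order is the quotient order. Then \cref{thm:leading-representation} will give both \(N\)-freeness and the identification of \(\cV(G/K)\) through \eqref{eq:canonical-value}.

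First the quotient order must be set up. By \cref{thm:intro-subgroup-modules}, \(K\) is a module, hence convex by \cref{lem:1}. Since \(K\triangleleft G\), \cref{convex} makes \(G/K\) an ordered group, and \(G\) is the lexicographical sum of \(K\)-cosets. Comparability of cosets \(xK,yK\) is then read off from any representatives.

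Next, set \(\mathcal H:=\{H/K\}\cup\{L/K:L\in\cV(G),\ H<L\}\), where \(H/K\) carries the type of \(H\) and each \(L/K\) carries the type of \(L\). Normality of the members of \(\mathcal H\) follows from \cref{cor:normal-subgroup-modules}(b), which gives \(H\triangleleft G\). For \(L>H\) we have \(H\leq L^-\), so \(K\leq L^-\). From this, \((L/K)^-=L^-/K\) and \(L/K\,/\,(L^-/K)\cong L/L^-\) naturally, with the same type and order. For the bottom level, \((H/K)^-=\{K\}\), and the factor is \(H/K\) itself, a quotient of \(H/H^-\) by the subgroup-module \(K/H^-\).
\begin{itemize}
\item If \(H\) has type \(\Atype\), then \(H/K\) is equality-ordered, since \(K\)-cosets inside \(H\) are incomparable because their differences lie outside \(H^-\).
\item If \(H\) has type \(\Ctype\), then \(K/H^-\) is a convex subgroup of a totally ordered group, so \(H/K\) is totally ordered.
\end{itemize}
In both cases the quotient order agrees with the induced coset order, via \cref{convex}.

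Then the admissibility conditions are checked.
\begin{itemize}
\item (D1): for \(gK\ne K\), the least member containing \(gK\) is \(v(g)/K\) if \(v(g)>H\). If \(v(g)=H\), which happens exactly when \(g\in H\setminus K\), it is \(H/K\). Elements with \(v(g)<H\) lie in \(H^-\subseteq K\).
\item (D2) and (D3): these come from the computations above.
\item (D4): conjugation permutes \(\cV(G)\) above \(H\) equivariantly by \cref{thm:intro-gallai}(c), fixes \(H\) and \(K\), and so induces order isomorphisms of the quotients.
\item Reducedness: between two levels \(L/K<L'/K\) above \(H\), reducedness of \(\cV(G)\) (\cref{prop:canonical-spine}) supplies an intermediate level. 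For a pair \(H/K<L/K\), an intermediate level \(M\) with \(H<M<L\) of opposite type exists by the same property, since \(H\) and \(L\) share a type.
\end{itemize}

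Finally, the leading-layer order of \(\mathcal H\) must match the quotient order. For \(gK\ne K\), the leading coset of \(gK\) at \(v(g)/K\) corresponds to \(gv(g)^-\) in \(v(g)/v(g)^-\) when \(v(g)>H\). When \(v(g)=H\), it is \(gK\) in \(H/K\), ordered by representatives. In each case the relation agrees with that of \(g\) to \(\e\) by \eqref{eq:canonical-leading}, which in turn equals that of \(gK\) to \(K\). Applying \cref{thm:leading-representation} then yields that \(G/K\) is \(N\)-free and that \(\cV(G/K)=\mathcal H\), with the stated factors.

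The main obstacle is the bottom level. One must check carefully that \(H/K\) has the right type, and that the reducedness pair involving \(H/K\) is handled, since \(H/K\) is a new level that is not a quotient of a canonical level by its own lower subgroup.
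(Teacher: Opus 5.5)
Your proposal is correct and follows the paper's proof essentially step for step. You use the same chain \(\{H/K\}\cup\{L/K:L\in\cV(G),\ H<L\}\), the same checks of (D1)--(D4) and reducedness via \(H\triangleleft G\) and \(K\leq H\subseteq L^-\), and the same final appeal to \cref{thm:leading-representation} and \eqref{eq:canonical-value}. One wording slip: ``normality of the members of \(\mathcal H\)'' overstates what \cref{cor:normal-subgroup-modules} gives, which is only \(H\triangleleft G\). That is all (D4) needs, and (D2) needs only \(L^-/K\triangleleft L/K\), not normality of \(L/K\) in \(G/K\).
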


\begin{proof}
The strong case is \cref{cor:quotient-compatibility}, so suppose that
\(K\) is not strong.  By \cref{thm:intro-subgroup-modules}, there is a
unique \(H\in\cV(G)\) with \(H^-<K<H.\)
Since a subgroup-module is convex and \(K\triangleleft G\), the quotient
\(G/K\) is an ordered group.

Consider the chain
\[
 \mathcal H_K
 :=
 \{H/K\}
 \cup
 \{L/K:L\in\cV(G),\ H<L\}.
\]
Every nonidentity coset \(gK\) belongs to a least member of this chain.
If \(g\in H\setminus K,\)
that member is \(H/K\).  If \(S_G(\e,g)=L>H,\)
it is \(L/K\).  No element of ambient value below \(H\) survives the
quotient, because every such element belongs to \(H^-\subseteq K.\)

With respect to the proposed chain \(\mathcal H_K\), the lower subgroup
of its first level is \((H/K)^-=K/K.\)
Moreover, \(H/K \cong (H/H^-)/(K/H^-).\)
The subgroup \(K/H^-\) is a subgroup-module of \(H/H^-\).  If
\(H/H^-\) is totally ordered, then \(K/H^-\) is convex and the quotient
is totally ordered; if \(H/H^-\) is equality-ordered, then the quotient
is equality-ordered.  Thus \(H/K\) has the same type as \(H/H^-\).

Now let \(L>H\).  Since \(H\subseteq L^-,\)
we have \(K\leq L^-\).  With respect to the proposed chain
\(\mathcal H_K\), the lower subgroup of \(L/K\) is \(L^-/K.\)
Consequently \((L/K)/(L^-/K) \cong L/L^-\)
naturally as ordered groups.

Because \(K\triangleleft G\),
\cref{cor:normal-subgroup-modules} shows that the unique level \(H\)
satisfying \(H^-<K<H\) is normal in \(G\).  Conjugation therefore
descends to \(G/K\), fixes \(H/K\), and permutes the higher retained
levels equivariantly.  Hence \(\mathcal H_K\) satisfies (D4), while the
preceding paragraphs give (D1)--(D3).

The chain is reduced.  Indeed, it is obtained from the ambient reduced
chain by deleting all levels below \(H\) and replacing the first
factor by \(H/K\), which has the same type as \(H/H^-\).  Thus any two
retained levels of the same type still have a retained level of the
opposite type between them.

Finally, the quotient order on \(G/K\) is exactly the leading-layer
order determined by \(\mathcal H_K\).  At the first layer this is the
quotient order on \(H/K\).  Above \(H\), the subgroup \(K\) lies in
every \(L^-\), so the leading coset and its comparison with the identity
are unchanged.

Thus \(\mathcal H_K\) is a reduced admissible two-coloured subgroup
chain whose leading-layer order is the given order on \(G/K\).
Applying \cref{thm:leading-representation} and
\eqref{eq:canonical-value} gives \(\cV(G/K)=\mathcal H_K.\)
In particular \(G/K\) is \(N\)-free, and the stated factor assertions
follow.
\end{proof}

\section{Finite width and intrinsic bottom recognition}
\label{sec:finite-width}

By \cref{thm:canonical-chain-finite,cor:general-width-factorization},
finite width already forces the canonical chain \(\cV(G)\) to be finite
for an arbitrary ordered group.  In the \(N\)-free case the canonical
factors are totally ordered or equality-ordered, so the general
factorization becomes particularly explicit.

\begin{theorem}[Finite-width characterization]
\label{thm:nfree-finite-width}
Let \(G\) be an \(N\)-free ordered group.  The following are equivalent.
\begin{enumerate}[label=\textup{(\roman*)}]
\item \(G\) has no infinite antichain;
\item \(G\) has finite width;
\item the canonical chain \(\cV(G)\) is finite and every type
\(\Atype\) quotient is finite.
\end{enumerate}

Equivalently, there is a finite chain of convex subgroups
\[
 \{\e\}=H_0<H_1<\cdots<H_r=G
\]
such that every \(H_i\) is normal in \(G\), the nontrivial factors
alternate between totally ordered groups and finite equality-ordered
groups, and the order on \(G\) is the leading-factor order.  Moreover,
\[
 \width(G)
 =
 \prod_{H_i/H_{i-1}\text{ of type }\Atype}
 |H_i/H_{i-1}|.
\]
\end{theorem}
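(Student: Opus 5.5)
The plan is to prove the cycle (ii)$\Rightarrow$(i)$\Rightarrow$(iii)$\Rightarrow$(ii), and to read off the chain form and the width formula from \cref{cor:general-width-factorization}. The implication (ii)$\Rightarrow$(i) is trivial, since a poset of finite width has no infinite antichain.

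For (i)$\Rightarrow$(iii), I will first apply \cref{thm:canonical-chain-finite}, which shows that $\cV(G)$ is finite. Next let $H$ be a type $\Atype$ level. Choosing one representative from each $H^-$-coset of $H$ gives a set of pairwise incomparable elements. This is because the Gallai quotient $Q_H$ is equality-ordered and relations between distinct Gallai blocks are uniform. So an infinite $Q_H$ would yield an infinite antichain, contradicting (i). Hence every type $\Atype$ quotient is finite.

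For (iii)$\Rightarrow$(ii), I will invoke \cref{cor:general-width-factorization}. It suffices to check that every canonical factor has finite width. By \cref{prop:canonical-spine}, each factor is either totally ordered, with width $1$, or equality-ordered, with width $|H/H^-|$, which is finite by hypothesis. The corollary then gives finite width together with
\[
 \width(G)=\prod_{H\in\cV(G)}\width(Q_H)=\prod_{\lambda(H)=\Atype}|H/H^-| .
\]

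For the equivalent chain formulation, I will write $\cV(G)=\{H_1<\cdots<H_r\}$ and set $H_0=\{\e\}$. As in the proof of \cref{cor:general-width-factorization}, we have $H_r=G$ and $H_i^-=H_{i-1}$, so the factors $H_i/H_{i-1}$ are exactly the canonical factors. Each $H_i$ is a convex subgroup by \cref{thm:intro-gallai}. Reducedness (\cref{prop:canonical-spine}) says that consecutive levels have opposite types, so the factors alternate. The order is the leading-factor order by \eqref{eq:canonical-leading}.

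The one step needing care is normality of each $H_i$ in $G$. Conjugation by any $a\in G$ induces an order automorphism of the finite chain $\cV(G)$, by the equivariance in \cref{thm:intro-gallai}(c). An order automorphism of a finite chain is the identity, so $aH_ia^{-1}=H_i$ for every $a$. Conversely, given such a chain satisfying the listed conditions, the data form a reduced admissible two-coloured chain. By \cref{thm:intro-characterisation} it is then the canonical chain, which reduces this direction to (iii).
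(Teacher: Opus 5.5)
Your proposal is correct and follows the paper's proof essentially step for step. Both run the cycle of implications through \cref{thm:canonical-chain-finite} and \cref{cor:general-width-factorization}, obtain normality because conjugation induces an order automorphism of the finite chain \(\cV(G)\), and get alternation from reducedness; for the converse you cite \cref{thm:intro-characterisation}, whereas the paper checks (D1)--(D4) and applies \cref{thm:leading-representation}, which carries the same content.
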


\begin{proof}
Assume first that \(G\) has no infinite antichain.  By
\cref{thm:canonical-chain-finite}, the canonical chain is finite.
Every type \(\Atype\) quotient is finite: otherwise, choosing one
representative from each \(H^-\)-coset would give an infinite antichain
in \(G\).  Thus \textup{(i)} implies \textup{(iii)}.

Assume \textup{(iii)}.  A type \(\Ctype\) factor has width \(1\),
while a finite type \(\Atype\) factor has width equal to its
cardinality.  Therefore
\cref{cor:general-width-factorization} gives
\[
 \width(G)
 =
 \prod_{\lambda(H)=\Atype}|Q_H|
 <\infty.
\]
Thus \textup{(iii)} implies \textup{(ii)}.  Finally, finite width
excludes an infinite antichain, so \textup{(ii)} implies
\textup{(i)}.  The same product formula gives the asserted expression
for \(\width(G)\).

For the equivalent finite-chain formulation, assume \textup{(iii)} and
write \(\{\e\}=H_0<H_1<\cdots<H_r=G.\)
Then \(H_i^-=H_{i-1} \qquad(1\leq i\leq r).\)
Conjugation acts by order automorphisms on the finite chain
\(\cV(G)\).  Every order automorphism of a finite chain is the
identity, so
\[
 aH_i a^{-1}=H_i
 \qquad(a\in G,\ 0\leq i\leq r).
\]
Hence every \(H_i\) is normal in \(G\).

By \cref{prop:canonical-spine}, the factors are totally ordered or
equality-ordered.  Under \textup{(iii)}, the equality-ordered factors
are finite.  Since the canonical colouring is reduced and the chain is
finite, the two types alternate.  Finally,
\cref{thm:intro-gallai}\textup{(c)} shows that the given order on
\(G\) is the leading-factor order.

Conversely, suppose that \(\{\e\}=H_0<H_1<\cdots<H_r=G\)
is a finite chain of convex subgroups satisfying the stated
properties.  Put \(H_i^-:=H_{i-1} \qquad(1\leq i\leq r).\)
The top level \(G\) and finiteness of the chain give (D1).
Since every \(H_{i-1}\) is normal in \(G\), it is in particular normal
in \(H_i\), so (D2) holds.  The prescribed factor orders give (D3).
Normality of every level makes conjugation fix the chain, and since
conjugation is an order automorphism of \(G\), it induces an order
automorphism on every type \(\Ctype\) quotient.  Thus (D4) holds.
Alternation gives reducedness.

Hence the chain is a reduced admissible two-coloured subgroup chain.
Since the given order is its leading-factor order,
\cref{thm:leading-representation} shows that this order is \(N\)-free
and that \(\{\e\}=H_0<H_1<\cdots<H_r=G\)
is its canonical chain.
\end{proof}

\begin{corollary}[Width divisibility]
\label{cor:subgroup-width}
Let \(G\) be an \(N\)-free ordered group of finite width, and write its
canonical chain as
\[
 \{\e\}=H_0<H_1<\cdots<H_r=G.
\]
Let \(K\leq G\), and put
\[
 K_i:=K\cap H_i
 \qquad(0\leq i\leq r).
\]
Then
\[
 |\cV(K)|\leq|\cV(G)|,
\]
with the convention \(\cV(\{\e\})=\varnothing\), and
\begin{equation}
 \width(K)
 =
 \prod_{\substack{1\leq i\leq r\\
 H_i/H_{i-1}\text{ of type }\Atype}}
 [K_i:K_{i-1}].
 \label{eq:intro-subgroup-width}
\end{equation}
Consequently,
\[
 \width(K)\mid\width(G).
\]

More generally, if \(L\triangleleft K\) is a subgroup-module of the
induced order on \(K\), then
\[
 \width(K/L)\mid\width(K)\mid\width(G).
\]
\end{corollary}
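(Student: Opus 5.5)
The plan is to use the subgroup-inheritance corollary together with the subgroup-restriction theorem and the width factorization. Since \(K\leq G\) and \(G\) has finite width, \(K\) is \(N\)-free (induced subposet) and has finite width. By \cref{thm:intro-subgroup-restriction}, \(\cV(K)=\{M_B:B\in\mathcal B_K\}\), and the blocks \(B\) are disjoint nonempty subsets of the active set \(\mathcal A_K\subseteq\cV(G)\). Hence \(|\cV(K)|\leq|\mathcal A_K|\leq|\cV(G)|\); the trivial subgroup case is covered by the stated convention.

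For the width formula, I will apply \cref{cor:subgroup-inheritance} with the filtration \(\{\e\}=K_0\leq K_1\leq\cdots\leq K_r=K\). Here \(K_i^-=K\cap H_{i-1}=K_{i-1}\), each \(K_i\) is a convex subgroup-module of \(K\), \(K_{i-1}\triangleleft K_i\), and \(K_i/K_{i-1}\) embeds as an ordered subgroup of \(H_i/H_{i-1}\). Consequently, as an ordered set, \(K_i\) is the lexicographical sum of copies of \(K_{i-1}\) indexed by \(K_i/K_{i-1}\). This follows from \cref{convex} applied inside \(K_i\), or from \cref{thm:modular-subgroup-overlap}(c) applied to the module \(H_{i-1}\) of \(H_i\).

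The nontrivial factors have the ambient type. A \(\Ctype\)-factor gives a chain and contributes width \(1\). An \(\Atype\)-factor is an antichain of size \([K_i:K_{i-1}]\), which is finite since it embeds in the finite factor \(H_i/H_{i-1}\) by \cref{thm:nfree-finite-width}. Trivial factors contribute \(1\). The product rule for the width of a lexicographical sum of copies of a finite-width poset over a finite-width index, proved inside \cref{cor:general-width-factorization}, then gives \(\width(K_i)=\width(K_{i-1})\,\width(K_i/K_{i-1})\), and induction yields \eqref{eq:intro-subgroup-width}. Divisibility follows by Lagrange's theorem for the embedding \(K_i/K_{i-1}\hookrightarrow H_i/H_{i-1}\): each \([K_i:K_{i-1}]\) divides \(|H_i/H_{i-1}|\). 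Multiplying over the \(\Atype\)-indices and comparing with the product formula of \cref{thm:nfree-finite-width} gives \(\width(K)\mid\width(G)\).

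For the quotient statement, let \(L\triangleleft K\) be a subgroup-module of \(K\). Then \(L\) is convex by \cref{lem:1}, so \(K/L\) is an ordered group. By \cref{convex}, \(K\) is the lexicographical sum of copies of \(L\) indexed by \(K/L\). Since \(K\) has finite width, so do \(L\) and \(K/L\), the latter by taking one representative per coset of an antichain of cosets. The product rule then gives \(\width(K)=\width(L)\,\width(K/L)\), so \(\width(K/L)\mid\width(K)\). Combined with the first part, \(\width(K)\mid\width(G)\).

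The main obstacle is the identification \(K_i^-=K_{i-1}\) and the verification that the inherited lexicographical decomposition is exactly by \(K_{i-1}\)-cosets indexed by the ordered factor. This rests on \cref{cor:subgroup-inheritance} being applied with \(H_i^-=H_{i-1}\), which holds for a finite canonical chain. I would double-check that step before relying on it.
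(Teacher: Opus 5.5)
Your proposal is correct. For the bound \(|\cV(K)|\leq|\cV(G)|\), and for the width formula with its divisibility consequence, you follow essentially the paper's route:

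\begin{itemize}
\item the subgroup-restriction theorem gives the chain-length bound;
\item the width formula comes from the unreduced inherited filtration \(K_0\leq\cdots\leq K_r\) with \(K\cap H_i^-=K_{i-1}\), together with the lexicographic product rule applied level by level;
\item divisibility comes from Lagrange's theorem for the ordered embeddings \(K_i/K_{i-1}\hookrightarrow H_i/H_{i-1}\).
\end{itemize}

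The step you flagged, \(H_i^-=H_{i-1}\), is exactly the identity the paper records for a finite canonical chain in the proof of \cref{cor:general-width-factorization}, so it is safe.

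The genuine difference lies in the last assertion. The paper works inside the canonical chain of \(K\) and splits into two cases. If \(L\) is strong, \cref{cor:quotient-compatibility} deletes an initial part of the chain. If \(L\) is not strong, \cref{cor:nfree-modular-quotient} replaces only the lowest retained factor, by one of order \([H:L]\), which divides \([H:H^-]\). The paper then reads off \(\width(K/L)\) from the \(N\)-free product formula.

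You instead apply \cref{convex} directly to the normal subgroup-module \(L\) of \(K\). Using the product rule for lexicographic sums, you obtain the exact identity \(\width(K)=\width(L)\,\width(K/L)\).

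Your argument is shorter and uses neither the \(N\)-free hypothesis nor any canonical-chain machinery. So \(\width(K/L)\mid\width(K)\) holds for every finite-width ordered group \(K\) and every normal subgroup-module \(L\), and the cofactor is identified as \(\width(L)\). The paper's route, in exchange, shows precisely which canonical factors of \(K/L\) differ from those of \(K\). That fits its bottom-up reconstruction theme but is not needed for the divisibility.
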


\begin{proof}
The case \(K=\{\e\}\) is immediate, so suppose \(K\ne\{\e\}.\)
By \cref{thm:intro-subgroup-restriction}, the canonical chain of \(K\)
is obtained from the members of the finite inherited filtration \(K_0\leq K_1\leq\cdots\leq K_r\)
that meet \(K\setminus\{\e\}\), by deleting repetitions and
coalescing consecutive same-type runs.  Hence \(|\cV(K)|\leq|\cV(G)|.\)

For the width formula it is better to retain the unreduced filtration.
Fix \(1\leq i\leq r\).  By subgroup inheritance, the natural map \(K_i/K_{i-1}\longrightarrow H_i/H_{i-1}\)
is an embedding of ordered groups.

If \(H_i/H_{i-1}\) has type \(\Ctype\), then the distinct
\(K_{i-1}\)-cosets in \(K_i\) are linearly ordered.  Therefore \(\width(K_i)=\width(K_{i-1}).\)
If \(H_i/H_{i-1}\) has type \(\Atype\), then those cosets are pairwise
incomparable copies of \(K_{i-1}\), and hence
\[
 \width(K_i)
 =
 [K_i:K_{i-1}]\,\width(K_{i-1}).
\]
Induction gives \eqref{eq:intro-subgroup-width}.

At every type \(\Atype\) step, the ambient factor
\(H_i/H_{i-1}\) is finite.  The group embedding above and Lagrange's
theorem therefore give \([K_i:K_{i-1}] \mid |H_i/H_{i-1}|.\)
Multiplying over all type \(\Atype\) indices and using the width
formula for \(G\) yields \(\width(K)\mid\width(G).\)

Thus the widths of subgroups belong to the divisor lattice of
\(\width(G)\).  In particular, if \(\width(G)=p\) is prime, then
every subgroup is either totally ordered or has width \(p\).

Now let \(L\triangleleft K\) be a subgroup-module of the induced order
on \(K\).  If \(L=K\), then \(K/L\) is trivial, so \(\width(K/L)=1\mid\width(K),\)
and there is nothing to prove.  Hence assume \(L<K.\)

The subgroup \(K\), with its induced order, is again \(N\)-free and
has finite width.  If \(L\) is strong in \(K\),
\cref{cor:quotient-compatibility} deletes an initial part of the
canonical chain of \(K\).  Hence the product of the sizes of the
remaining finite type \(\Atype\) factors divides the full width
product.

Suppose instead that \(L\) is not strong in \(K\).  By
\cref{cor:nfree-modular-quotient}, applied inside \(K\), only the
lowest retained canonical factor changes.  Let \(H\in\cV(K)\) be the
unique level with \(H^-<L<H.\)
If \(H/H^-\) has type \(\Ctype\), both the original and the new
lowest factors contribute width \(1\).  If \(H/H^-\) has type
\(\Atype\), then \(H/H^-\)
is finite and the new lowest factor has cardinality \(|H/L|=[H:L].\)
Since \(H^-\leq L\leq H\), Lagrange's theorem gives \([H:L]\mid[H:H^-].\)
All higher canonical factors are naturally order-isomorphic to their
counterparts in \(K\).  Therefore \(\width(K/L)\mid\width(K).\)
Combining this with the subgroup divisibility already proved gives \(\width(K/L)\mid\width(K)\mid\width(G).\)
\end{proof}

If there are \(a\) type \(\Atype\) factors and \(w:=\width(G)<\infty,\)
then every such factor has at least two elements.  Hence
\[
 2^a
 \leq
 \prod_{\substack{1\leq i\leq r\\
 H_i/H_{i-1}\text{ equality-ordered}}}
 |H_i/H_{i-1}|
 =
 w,
\]
so \(a\leq\lfloor\log_2 w\rfloor.\)
Since the two types alternate, there are at most \(a+1\) type
\(\Ctype\) factors.  Therefore
\begin{equation}
 r
 \leq
 2a+1
 \leq
 2\lfloor\log_2 w\rfloor+1.
 \label{eq:length-bound}
\end{equation}

\subsection{Intrinsic recognition of the least canonical level}

The general compatibility results make the bottom-up recursion largely
formal.  What remains genuinely independent is that, in finite width,
the least canonical level can be recognized directly from the local
comparability and incomparability geometry around the identity.

Recall from \cref{sec:graph-kernels} that \(I(G)(\e)\) denotes the
connected component of \(\e\) in \(\Inc(G)\).  Define
\begin{equation}
 K(G):=
 \{\e\}\cup
 \{x\in\inc(\e):
   \{x\}\text{ is a component of }\Comp(G)[\inc(\e)]\},
 \label{eq:K}
\end{equation}
and
\begin{equation}
 L(G):=
 \{g\in I(G)(\e):
   g\parallel x\text{ for every }x\in\inc(\e)\}.
 \label{eq:L}
\end{equation}

Assume that \(G\) is \(N\)-free of finite width and has at least two
canonical levels \(H_1<\cdots<H_r=G.\)
By \cref{thm:nfree-finite-width}, the levels alternate and every
\(H_i\) is normal in \(G\).

\begin{proposition}[Bottom kernel]
\label{prop:bottom-kernel}
If \(H_1\) has type \(\Atype\), then
\begin{equation}
 K(G)=H_1,
 \qquad
 L(G)=\{\e\}.
 \label{eq:bottom-A}
\end{equation}
If \(H_1\) has type \(\Ctype\), then
\begin{equation}
 K(G)=\{\e\},
 \qquad
 L(G)=H_1.
 \label{eq:bottom-C}
\end{equation}
\end{proposition}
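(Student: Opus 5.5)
The plan is to compute \(\inc(\e)\) explicitly from the leading-layer rule and then read off \(K(G)\) and \(L(G)\). By \cref{thm:nfree-finite-width} the chain \(H_1<\cdots<H_r=G\) alternates, \(H_i^-=H_{i-1}\), and by \cref{prop:canonical-spine} an element \(g\ne\e\) is incomparable with \(\e\) exactly when \(v(g)\) has type \(\Atype\). So \(\inc(\e)\) is the union of the sets \(H_i\setminus H_{i-1}\) over the \(\Atype\)-levels \(H_i\). The same rule, applied to \(x^{-1}y\), together with \cref{lem:leading-value}, gives a description of comparability inside \(\inc(\e)\): for \(x,y\in\inc(\e)\) with \(v(x)\ne v(y)\), we have \(v(x^{-1}y)=\max\{v(x),v(y)\}\), which is an \(\Atype\)-level, so \(x\parallel y\). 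If \(v(x)=v(y)=H_i\), then \(v(x^{-1}y)\le H_i\), and \(x,y\) are comparable iff \(v(x^{-1}y)\) is a \(\Ctype\)-level, that is, iff \(x^{-1}y\in H_{i-1}\setminus\{\e\}\) and \(v(x^{-1}y)\) is of type \(\Ctype\).

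\emph{Case \(H_1\) of type \(\Atype\).} For \(x\in H_1\setminus\{\e\}\), any \(y\in\inc(\e)\) with \(y\ne x\) satisfies either \(v(y)\ne v(x)\), hence \(y\parallel x\), or \(v(y)=H_1\) with \(x^{-1}y\in H_1\setminus\{\e\}\) of type \(\Atype\), hence again \(y\parallel x\). Thus \(\{x\}\) is an isolated vertex of \(\Comp(G)[\inc(\e)]\), and \(H_1\subseteq K(G)\). Conversely, take \(x\in\inc(\e)\) with \(v(x)=H_i\) and \(i\ge 3\). Then \(H_{i-1}\) has type \(\Ctype\); pick \(c\in H_{i-1}\setminus H_{i-2}\) and put \(y:=xc\). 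Then \(v(y)=H_i\) by \cref{lem:leading-value}, \(y\in\inc(\e)\), and \(x^{-1}y=c\) is comparable with \(\e\), so \(x\) and \(y\) are comparable. Hence \(\{x\}\) is not a component, and \(K(G)=H_1\). For \(L(G)\): since \(H_1\) has type \(\Atype\), every \(\Atype\)-level contains \(H_1\). Given \(g\in I(G)(\e)\) with \(g\ne\e\), choose \(x\in H_1\setminus\{\e\}\subseteq\inc(\e)\) with \(x\ne g\) (possible since \(|H_1|\ge 2\) and \(H_1\) contains at least two nonidentity elements, or handle \(|H_1|=2\) by using \(x=g\) when \(g\in H_1\), since \(g\parallel g\) fails, so \(g\notin L(G)\) anyway). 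Then \(g\) is compared with \(x\): either \(v(g)>H_1\) of type \(\Ctype\), which makes \(g\) comparable to \(x\) via the leading-layer rule, or we use another element of \(\inc(\e)\) to produce a comparability. I expect this bookkeeping, showing every nonidentity \(g\in I(G)(\e)\) is comparable to some point of \(\inc(\e)\) (including \(g\) itself, trivially, if we read \(g\parallel g\) as false), to be routine, giving \(L(G)=\{\e\}\). Indeed, if \(g\in\inc(\e)\), then \(g\) fails \(g\parallel g\). If \(g\notin\inc(\e)\), then \(v(g)\) is of type \(\Ctype\) and above \(H_1\), and \(g\) is comparable to every \(x\in H_1\).

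\emph{Case \(H_1\) of type \(\Ctype\).} \(K(G)=\{\e\}\) follows from the same construction: every \(x\in\inc(\e)\) has \(v(x)=H_i\) with \(i\ge2\) and \(H_{i-1}\) of type \(\Ctype\), so \(y=xc\) yields a comparable neighbour. For \(L(G)\), first \(I(G)(\e)=H_{r'}^-\)-type: by \cref{prop:canonical-spine}, \(I(G)(\e)\) is the lower subgroup at the top \(\Ctype\)-level, or is all of \(G\) if the top level is \(\Atype\); in either case it contains \(H_1\). Elements \(g\in H_1\setminus\{\e\}\) satisfy \(g\parallel x\) for all \(x\in\inc(\e)\), since \(v(x^{-1}g)=v(x)\) is an \(\Atype\)-level. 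Conversely, if \(g\in I(G)(\e)\setminus H_1\), then \(v(g)=H_i\) with \(i\ge2\). If \(H_i\) has type \(\Atype\), then \(g\in\inc(\e)\) and \(g\) fails the condition against itself. If \(H_i\) has type \(\Ctype\), then \(i\ge3\), so \(H_{i-1}\) is an \(\Atype\)-level; pick \(x\in H_{i-1}\setminus H_{i-2}\subseteq\inc(\e)\). Then \(v(x^{-1}g)=H_i\) is a \(\Ctype\)-level, so \(x\) and \(g\) are comparable. Hence \(L(G)=H_1\).

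The main obstacle is the careful use of \cref{lem:leading-value} to control \(v(x^{-1}y)\) when \(v(x)=v(y)\). Everything else is case bookkeeping along the alternating finite chain.
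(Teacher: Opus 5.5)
Your proposal is correct and follows essentially the paper's route. Both arguments do direct case bookkeeping along the alternating finite chain using the leading-value law, and they use the same witnesses:
\begin{itemize}
\item a product $xc$ with $v(c)<v(x)$ and $v(c)$ of type $\Ctype$, to show non-isolation;
\item the failure of $g\parallel g$ for $g\in\inc(\e)$;
\item an element of the $\Atype$-level just below a $\Ctype$-valued $g$.
\end{itemize}

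There is one step that is not justified as written. In the type-$\Ctype$ case with top level of type $\Atype$, you need $H_1\subseteq I(G)(\e)$, and \cref{prop:canonical-spine} does not give this. The paper supplies it with the explicit path $\e - t - h$ in $\Inc(G)$, where $t\in H_2\setminus H_1$.

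Your type-$\Atype$ treatment of $L(G)$ should also be cut down to the final ``Indeed'' argument. The aside claiming $H_1$ has two nonidentity elements is false in general and is not needed.
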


\begin{proof}
Suppose first that \(H_1\) has type \(\Atype\).  Since \(H_1^-= \{\e\},\)
the induced order on \(H_1\) is equality-ordered.  Thus distinct
elements of \(H_1\) are mutually incomparable.

Let
\[
 x\in H_1\setminus\{\e\},
 \qquad
 y\in\inc(\e)\setminus H_1.
\]
The value of \(y\) is a higher type \(\Atype\) level.  Since \(v(x)=H_1<v(y),\)
the leading-value rule shows that the relation between \(x\) and \(y\)
is determined at the type \(\Atype\) level \(v(y)\).  Hence \(x\parallel y.\)
Thus every nonidentity element of \(H_1\) is isolated in
\(\Comp(G)[\inc(\e)]\), and therefore \(H_1\subseteq K(G).\)

Conversely, let \(x\in\inc(\e)\setminus H_1.\)
Its exact value is a higher type \(\Atype\) level.  By alternation,
there is a lower canonical level of type \(\Ctype\).  Choose a
nonidentity element \(c\) of exact value at such a level.  Then \(v(c)<v(x).\)
By the leading-value law, \(v(xc)=v(x),\)
so \(xc\parallel\e.\)
Thus \(xc\in\inc(\e).\)
On the other hand, \(x^{-1}(xc)=c,\)
and \(c\) is comparable with \(\e\).  Hence \(x\) is comparable with
\(xc\).  Since \(c\ne\e\), we have \(xc\ne x\).  Thus \(x\) is not
isolated in \(\Comp(G)[\inc(\e)]\).  Therefore \(K(G)=H_1.\)

Now let \(g\in L(G)\setminus\{\e\}.\)
Then \(g\notin\inc(\e)\): otherwise, taking \(x=g\) in the defining
condition for \(L(G)\) would require \(g\parallel g,\)
which is impossible.  Hence \(g\) is comparable with \(\e\), so its
value has type \(\Ctype\).

It cannot lie in \(H_1\), whose nonidentity elements have type
\(\Atype\).  Therefore \(v(g)>H_1.\)
For every \(h\in H_1\setminus\{\e\}\), the larger value \(v(g)\) has
type \(\Ctype\), so the leading-layer rule makes \(g\) comparable with
\(h\).  But \(h\in\inc(\e),\)
contradicting the defining condition of \(L(G)\).  Hence \(L(G)=\{\e\}.\)

Now suppose that \(H_1\) has type \(\Ctype\).  Let \(x\in\inc(\e).\)
Then \(x\notin H_1\), because every nonidentity element of \(H_1\) is
comparable with \(\e\).  Thus \(v(x)\) is a higher type \(\Atype\)
level.  Hence, for every \(h\in H_1\setminus\{\e\},\)
the larger value is \(v(x)\), of type \(\Atype\), and therefore \(h\parallel x.\)
Also \(\e\parallel x\)
by the definition of \(\inc(\e)\).

Let \(h\in H_1\setminus\{\e\}\).  Since there are at least two
canonical levels, \(H_2\) exists and, by alternation, has type
\(\Atype\).  Choose \(t\in H_2\setminus H_1.\)
Then \(v(t)=H_2\), so \(\e\parallel t.\)
Since \(v(h)=H_1<H_2=v(t)\), the leading-layer rule also gives \(t\parallel h.\)
Thus \(\e-t-h\)
is a path in \(\Inc(G)\), and hence \(h\in I(G)(\e).\)
We already observed that \(h\parallel x\) for every
\(x\in\inc(\e)\), so \(h\in L(G).\)
Together with \(\e\in L(G)\), this yields \(H_1\subseteq L(G).\)

Conversely, let \(z\notin H_1.\)
If \(z\in\inc(\e)\), then \(z\notin L(G)\), because taking \(x=z\)
in the defining universal condition would require \(z\parallel z\).

Suppose therefore that \(z\notin\inc(\e).\)
Then \(z\ne\e\) is comparable with \(\e\), so \(v(z)\) has type
\(\Ctype\).  Since \(z\notin H_1\), write \(v(z)=H_j\)
with \(j\geq2\).  By alternation, \(H_{j-1}\) has type \(\Atype\).
Choose \(t\in H_{j-1}\setminus H_{j-2},\)
where \(H_0=\{\e\}\) if \(j=2\).  Then \(v(t)=H_{j-1},\)
so \(t\in\inc(\e).\)
Because \(v(t)<v(z)\)
and \(v(z)\) has type \(\Ctype\), the leading-layer rule makes \(z\)
comparable with \(t\).  Hence \(z\notin L(G)\).

Therefore \(L(G)=H_1.\)

Finally, let \(x\in\inc(\e)\)
and choose \(\e\ne h\in H_1.\)
Since \(v(h)=H_1<v(x),\)
the leading-value law gives \(v(xh)=v(x).\)
The value \(v(x)\) has type \(\Atype\), so \(xh\in\inc(\e).\)
Moreover, \(x^{-1}(xh)=h,\)
and \(h\) is comparable with \(\e\).  Hence \(x\) is comparable with
\(xh\).  Since \(h\ne\e\), the two elements are distinct.  Thus no
element of \(\inc(\e)\) is isolated in
\(\Comp(G)[\inc(\e)]\).  Therefore \(K(G)=\{\e\}.\)
\end{proof}

The proposition identifies the least canonical level directly, without
first determining the canonical chain: exactly one of \(K(G)\) and
\(L(G)\) is nontrivial, and that set is \(H_1\).  Let \(B(G)\) denote
this nontrivial set.  Since \(H_1\) is a normal strong subgroup,
\cref{cor:quotient-compatibility} gives \(\cV(G/B(G)) = \{H_2/H_1,\ldots,H_r/H_1\}.\)
Thus, as long as at least two canonical levels remain, the proposition
recognizes the least one intrinsically and quotienting by it removes that
level.  Iterating this procedure determines all but the final canonical
level; when only one level remains, it is the whole remaining quotient.
Hence the finite canonical chain is reconstructed from the bottom up.

\section{Realisation and examples}\label{sec:examples}

\subsection{Canonical value chains}

The canonical value chain need not be finite or discrete.  We record first
that every chain admits the reduced two-colouring required by the structural
characterization.

\begin{lemma}\label{lem:reduced-colouring-chain}
Every nonempty chain admits a reduced two-colouring.
\end{lemma}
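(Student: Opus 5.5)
To colour a nonempty chain $C$ reducedly, the plan is to build a map $\lambda:C\to\{\Ctype,\Atype\}$ for which any two elements $a<b$ of the same colour have an element of the opposite colour strictly between them. The key reduction is to partition $C$ into consecutive intervals, colour each interval by a single rule, and choose the partition so that each interval is either a singleton or dense. Colour classes that are dense and interleaved automatically give reducedness inside a dense piece. The difficulty is purely combinatorial; no group theory is involved.

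First I will handle the two extreme cases. If $C$ is dense (no jumps, meaning no pairs $a<b$ with nothing between them), I choose a partition of $C$ into two subsets each of which is dense in $C$. For instance, each nondegenerate interval $(a,b)$ of $C$ should meet both colours. Reducedness then follows at once: given $a<b$, some point of the opposite colour lies in $(a,b)$. Such a partition exists by a standard transfinite or Zorn construction. Alternatively, I would first embed $C$ into a lexicographic product $C\times\mathbb Q$ restricted suitably, but that changes the chain, so a direct construction is preferable.

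If $C$ has jumps, those consecutive pairs must receive opposite colours. For a scattered chain such as $\mathbb Z$ or a finite chain, I alternate colours along each maximal "discrete component". Here I take the equivalence relation $x\sim y$ iff only finitely many elements lie between them. Each class is order-isomorphic to a convex subset of $\mathbb Z$, and I colour it by the parity of the distance from a chosen base point. Then two same-coloured elements in one class are at even distance, hence at distance at least $2$, and an element of opposite colour lies between them.

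For elements $a<b$ in different $\sim$-classes, there are infinitely many elements between them. If $b$'s class has an element below $b$, I use the predecessor of $b$, which has the opposite colour and lies strictly above $a$. The predecessor belongs to $b$'s class, so it lies above $a$'s class. Symmetrically, I can use the successor of $a$. The remaining case is when $a$ is the maximum of its class and $b$ is the minimum of its class. Then both classes are singletons on those sides, and the classes $a$, $b$ are not adjacent, because adjacency would make them equivalent.

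This remaining case is the main obstacle: the quotient chain $C/{\sim}$ is dense, and endpoints of classes facing each other across a gap need opposite-colour witnesses. To handle it, I choose the base-point parities using the density argument on the quotient chain $C/{\sim}$. The goal is that between any two classes there is a class containing both colours (any class with at least two elements does), or else, among singleton classes, colours are distributed densely as in the dense case. Concretely, I apply the dense-case construction to the quotient, colouring singleton classes densely with both colours. Any class with at least two elements already contains both colours, so a witness exists whenever some class lies strictly between. Density of $C/{\sim}$ guarantees that some class lies strictly between any two distinct classes, and the colour choice on singleton classes guarantees that an opposite-coloured one can be found. I expect verifying this density-of-both-colours choice on the quotient, via Zorn's lemma or a back-and-forth enumeration in the countable case, to be the delicate step.
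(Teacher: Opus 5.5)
\textbf{Verdict.} Your overall architecture is the paper's: the finite condensation $x\sim y$ iff $[x,y]$ is finite, alternating colours inside each class, and colouring the singleton classes by two dense sets. Your case analysis is also correct up to the last case, where $a=\max X$ and $b=\min Y$ for distinct classes $X<Y$; in particular you are right that $X$ and $Y$ cannot be adjacent in $C/{\sim}$.

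\textbf{The gap.} You claim that $C/{\sim}$ is dense, and both your colouring rule and your final verification rest on this. The claim is false in general. For $C=\omega+\omega^*$, or $\omega+\omega$, or $1+\zeta+1$, the quotient is a finite chain with consecutive classes. Three consequences follow.
\begin{itemize}
\item ``Apply the dense-case construction to the quotient'' is not well defined. Neither the quotient nor the set $S$ of singleton classes can be split into two dense subsets in general; in $1+\zeta+1$, $S$ is a two-element chain.
\item ``Some class lies strictly between any two distinct classes'' fails in general.
\item Even when a class does lie between $X$ and $Y$, you have not excluded the case where the only thing between them is a single singleton class $\{c\}$ with the same colour as $a$ and $b$. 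No density-type colouring can help there.
\end{itemize}

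\textbf{The repair.} What is missing is a localized density statement, which is exactly how the paper proceeds.
\begin{enumerate}
\item Partition $S$ into its maximal convex subsets, with convexity taken in $C/{\sim}$.
\item Any such subset $J$ with at least two elements is densely ordered. Two members consecutive in $J$ would be consecutive in $C/{\sim}$, so their points would be consecutive in $C$, and hence $\sim$-equivalent.
\item Split each such $J$ into two dense subsets and use them as the colour classes on $J$. This is a known theorem on dense chains (the paper cites it), and you should cite it rather than appeal to an unspecified Zorn argument. Colour the isolated members of $S$ arbitrarily.
\end{enumerate}
In the final case, if some nonsingleton class lies between $X$ and $Y$, it contains both colours and you are done. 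Otherwise every class strictly between them is a singleton. There must be at least two of them: with zero or one, $[a,b]$ would be finite, forcing $a\sim b$. These classes form a convex subset of $S$, so they lie in a single maximal convex $J$. Both colour classes are dense in $J$, so there is a point of the opposite colour strictly between two of them, and hence strictly between $a$ and $b$. With this repair your argument becomes the paper's proof.
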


\begin{proof}
Let \(C\) be a nonempty chain.  Consider Rosenstein's finite
condensation \cite[Chapter~4, \S2]{Rosenstein1982}:
\[
 x\sim y
 \quad\Longleftrightarrow\quad
 [x,y]\text{ is finite}.
\]
Its classes are convex, and every nonsingleton class is finite or has
order type \(\omega\), \(\omega^*\), or \(\zeta\).  Hence every
nonsingleton class admits an alternating two-colouring.

Let \(D=C/{\sim}\), and let \(S\subseteq D\) be the set of singleton
\(\sim\)-classes.  Partition \(S\) into its maximal convex subsets.  If such
a subset \(J\) has at least two elements, then \(J\) is densely ordered.
Indeed, if two members of \(J\) were consecutive in \(J\), convexity of
\(J\) in \(D\) would make them consecutive in \(D\); the corresponding two
points of \(C\) would then have a finite interval between them and hence
belong to the same finite-condensation class, a contradiction.  By
\cite[Theorem~21.1]{Monk2024}, every such \(J\) is the disjoint union of two
dense subsets; use these as the two colour classes on \(J\).  Colour an
isolated singleton member of \(S\) arbitrarily.

We claim that the resulting colouring of \(C\) is reduced.  Suppose, to the
contrary, that \(x<y\) have the same colour and that there is no point of the
opposite colour strictly between them.  If \(x\sim y\), the alternating
colouring of their condensation class gives such a point, so their
condensation classes are distinct.

Every condensation class strictly between the classes of \(x\) and \(y\)
must be a singleton: a nonsingleton class contains points of both colours
and lies entirely between \(x\) and \(y\).  Moreover, if the class of \(x\)
is nonsingleton, then \(x\) must be its greatest element, since otherwise
the next point of that class above \(x\) has the opposite colour.  Dually,
if the class of \(y\) is nonsingleton, then \(y\) must be its least element.

There cannot be zero or one singleton condensation classes strictly between
the classes of \(x\) and \(y\).  Under the endpoint conclusion just obtained,
the interval \([x,y]\) would then be finite, forcing \(x\sim y\), a
contradiction.  Hence there are at least two singleton condensation classes
strictly between them.  These classes lie in one maximal convex subset
\(J\subseteq S\), and \(J\) has at least two elements.  Since both colour
classes are dense in \(J\), there is a point of the opposite colour between
\(x\) and \(y\), again a contradiction.  Therefore the colouring is reduced.
\end{proof}

\begin{theorem}[Realisation of canonical value chains]
\label{thm:realisation}
Every nonempty reduced two-coloured chain \((\Gamma,\lambda)\) is isomorphic, as a
coloured chain, to the canonical value chain of an \(N\)-free ordered
abelian group whose canonical factors are all isomorphic as groups to
\(\mathbb Z\).  Consequently every nonempty chain occurs as \(\cV(G)\) for
some \(N\)-free ordered abelian group \(G\).
\end{theorem}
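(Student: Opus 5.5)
The plan is a Hahn-type construction: build \(G\) as a group of finitely supported integer-valued functions on \(\Gamma\) and verify that the chain of "support below a level" subgroups is a reduced admissible two-coloured chain. Then \cref{thm:leading-representation} supplies the order and identifies the canonical chain.

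\textbf{Construction.} Given a nonempty reduced two-coloured chain \((\Gamma,\lambda)\), let
\[
 G:=\bigoplus_{\gamma\in\Gamma}\mathbb Z,
\]
the group of finitely supported functions \(g:\Gamma\to\mathbb Z\) under pointwise addition. For \(\gamma\in\Gamma\), put
\[
 H_\gamma:=\{g\in G:\supp(g)\subseteq(\leftarrow,\gamma]\},
\]
and colour \(H_\gamma\) by \(\lambda(\gamma)\).

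\textbf{Verifying admissibility.} First, \(\gamma\mapsto H_\gamma\) is a strictly increasing injection, because the indicator \(\delta_\gamma\) lies in \(H_\gamma\) but in no \(H_\beta\) with \(\beta<\gamma\). So \(\cH:=\{H_\gamma\}\) is a chain isomorphic to \(\Gamma\) as a coloured chain. Next, take \(g\ne0\). Its support is finite and nonempty, hence has a largest element \(\gamma\), and \(H_\gamma\) is the least member of \(\cH\) containing \(g\). This gives (D1). Now compute
\[
 H_\gamma^-=\bigcup_{\beta<\gamma}H_\beta=\{g:\supp(g)\subseteq(\leftarrow,\gamma)\},
\]
where the equality uses finiteness of supports. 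This is a proper subgroup, and normality is automatic since \(G\) is abelian, which gives (D2). The quotient \(H_\gamma/H_\gamma^-\) is isomorphic to \(\mathbb Z\) via \(g\mapsto g(\gamma)\). Give it the usual order at \(\Ctype\)-levels and the equality order at \(\Atype\)-levels; this gives (D3). Since \(G\) is abelian, conjugation is trivial, so (D4) holds. Reducedness of \(\cH\) is inherited directly from that of \(\Gamma\).

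\textbf{Applying the representation theorem.} By \cref{thm:leading-representation}, the leading-layer order is an \(N\)-free two-sided invariant order on \(G\), and \(S(\e,g)=v(g)\) by \eqref{eq:canonical-value}. Hence \(\cV(G)=\cH\), the colours match, and every canonical factor is isomorphic to \(\mathbb Z\). For the final clause, \cref{lem:reduced-colouring-chain} equips any nonempty chain with a reduced colouring, and we then apply the first part.

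\textbf{Main obstacle.} The key step is confirming that \(\cV(G)\) is exactly \(\cH\), with no extra or missing levels. This depends on every \(H\in\cH\) having the form \(v(g)\) for some \(g\), which the indicator \(\delta_\gamma\) witnesses. It also depends on the computation of \(H_\gamma^-\) as the union of the levels below \(\gamma\), even when \(\gamma\) has no predecessor; finite support is exactly what makes that work.
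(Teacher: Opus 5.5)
Your proposal is correct and follows essentially the same route as the paper. The paper also takes the finite-support group \(\bigoplus_{\gamma\in\Gamma}\mathbb Z e_\gamma\) with the levels \(H_\gamma\) of elements supported in \((-\infty,\gamma]\), checks admissibility and reducedness (using commutativity for normality and conjugation), and then invokes \cref{thm:leading-representation} for the order and canonical chain and \cref{lem:reduced-colouring-chain} for the final clause.
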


\begin{proof}
Let \((\Gamma,<,\lambda)\) be a nonempty reduced two-coloured chain.  We use additive notation and let
\[
G_\Gamma:=\bigoplus_{\gamma\in\Gamma}\mathbb Z e_\gamma
\]
be the free abelian group with basis
\(\{e_\gamma:\gamma\in\Gamma\}\).
Thus every \(g\in G_\Gamma\) has a unique representation \(g=\sum_{\gamma\in F} n_\gamma e_\gamma,\)
where \(F\subseteq\Gamma\) is finite, and we write \(\supp(g):=\{\gamma\in\Gamma:n_\gamma\neq0\}.\)
For \(g\neq0\), define \(v(g):=\max\supp(g),\)
which exists because the support is finite, and define
\[
 H_\gamma:=\{g:\supp(g)\subseteq(-\infty,\gamma]\}.
\]
If \(v(g)=\gamma\), then \(H_\gamma\) is the least member of the chain
\(\{H_\delta\}\) containing \(g\), and
\[
 H_\gamma^-=
 \{g:\supp(g)\subseteq(-\infty,\gamma)\},
 \qquad
 H_\gamma/H_\gamma^-\cong\mathbb Z.
\]
Give this quotient the usual order when \(\lambda(\gamma)=\Ctype\) and the
equality order when \(\lambda(\gamma)=\Atype\).  Since \(G_\Gamma\) is
abelian, each \(H_\gamma^-\) is normal in \(H_\gamma\), and the
conjugation condition is automatic.  Thus \(\{H_\gamma:\gamma\in\Gamma\}\) is a reduced admissible
two-coloured subgroup chain.  By \cref{thm:leading-representation}, the
leading-layer rule defines an \(N\)-free ordered abelian group and \(S(0,g)=H_{v(g)}\qquad(0\ne g\in G_\Gamma).\)
Hence its canonical chain is isomorphic to \((\Gamma,\lambda)\), and every
canonical factor is isomorphic as a group to \(\mathbb Z\).

For an arbitrary nonempty chain \(C\), choose a reduced two-colouring by
\cref{lem:reduced-colouring-chain} and apply the construction above.  This
proves the final assertion as well.
\end{proof}

Thus dense and nondiscrete canonical chains occur.  For instance one may
take \(\Gamma=\mathbb Q\) with two dense colour classes, or
\(\Gamma=\mathbb R\) with the rational points of type \(\Ctype\) and the
irrational points of type \(\Atype\).  By
\cref{thm:intro-gallai}\textup{(a)}, an initial segment with no largest
canonical value determines a strong subgroup which need not itself be the
value of any element.

\begin{example}
\label{ex:limit-collapse}
Take \(\Gamma=\mathbb R\), colour \(\mathbb Q\) by \(\Ctype\) and
\(\mathbb R\setminus\mathbb Q\) by \(\Atype\), and form the finite-support
realisation above.  Let
\[
 K:=\bigoplus_{q\in\mathbb Q}\mathbb Z e_q\leq G_\Gamma.
\]
The active ambient canonical levels of \(K\) are exactly the rational levels,
and all have type \(\Ctype\).  Hence they form one maximal convex
same-type block.  The subgroup-restriction theorem therefore gives
\[
 \cV(K)=\{K\}.
\]
In particular, the induced order on \(K\) is total.  Since the rational
levels are cofinal in the ambient chain, for every
\(\alpha\in\mathbb R\) there is \(q\in\mathbb Q\) with \(q>\alpha\), and
hence \(e_q\in K\setminus H_\alpha\).  Thus the unique canonical level
\(K\) of the induced order is not \(K\cap H_\alpha\) for any single
ambient canonical level \(H_\alpha\); rather, it is the union of the
increasing family of such intersections.  This is the limit phenomenon
which forces the block-union formulation in \cref{thm:intro-subgroup-restriction}.
\end{example}

\begin{example}[Weak orders]
Recall that a weak order is a linear sum of antichains.  Pouzet and Zaguia
\cite[Proposition~2]{PouzetZaguia2019} prove the following.

\begin{proposition}\label{claim:weakorder}
Let \(G\) be an ordered group and \(\inc(\e)\) be the set of elements
incomparable to \(\e\).  The following assertions are equivalent:
\begin{enumerate}[label=\textup{(\roman*)}]
\item The order of \(G\) is a weak order.
\item \(\inc(\e)\) is an antichain.
\item \(\inc(\e)\cup\{\e\}\) is a subgroup of \(G\).
\item \(\inc(\e)\cup\{\e\}\) is a module.
\end{enumerate}
If any of the above conditions hold, then \(H:=\inc(\e)\cup\{\e\}\) is a
normal subgroup of \(G\), the quotient group \(G/H\) is totally ordered
and the order on \(G\) is the lexicographical sum of copies of \(H\)
indexed by \(G/H\).
\end{proposition}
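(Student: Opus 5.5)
I would prove the cycle of implications (i)$\Rightarrow$(ii)$\Rightarrow$(iii)$\Rightarrow$(iv)$\Rightarrow$(i), and then derive the final assertions from \cref{convex}. Throughout write $H:=\inc(\e)\cup\{\e\}$. Note that $H$ is closed under inversion and conjugation, since inversion is an order anti-automorphism and conjugation is an order automorphism fixing $\e$.

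For (i)$\Rightarrow$(ii), the order is a linear sum of antichains $A_i$ indexed by a chain. Two elements are incomparable exactly when they are distinct and lie in the same $A_i$. Hence $\inc(\e)$ lies inside the block containing $\e$, and is therefore an antichain.

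For (ii)$\Rightarrow$(iii), closure under inverses is already known, so it suffices to show that $x,y\in\inc(\e)$ implies $x^{-1}y\in H$.
\begin{itemize}
\item Suppose $x^{-1}y>\e$. Then $y>x$, contradicting (ii) when $x\ne y$.
\item Suppose $x^{-1}y<\e$. This is symmetric.
\item Otherwise $x^{-1}y\parallel\e$ or $x^{-1}y=\e$, so $x^{-1}y\in H$.
\end{itemize}
The remaining case, where one of $x,y$ equals $\e$, is handled by closure under inverses.

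For (iii)$\Rightarrow$(iv), let $z\notin H$ and $h,h'\in H$; I must show $z<h\iff z<h'$, and dually. Since $z\notin H$, the element $z$ is comparable with $\e$, say $z<\e$. The key step is to show $z<h$ for every $h\in H$. By translation, $h^{-1}z\notin H$ because $H$ is a subgroup, so $h^{-1}z$ is comparable with $\e$. If $h^{-1}z>\e$, then $z>h$, and I need a contradiction. Consider the set $\{h\in H: z<h\}$. It contains $\e$, and it is closed under multiplication: if $z<h$ and $z<k$ with $h,k\in H$, then $hk\in H$, and $z<hk$ follows by applying the dichotomy to $hk$ and using that $z>hk$ together with invariance should clash. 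This is the delicate point. A cleaner route is to argue that every $g\notin H$ satisfies the following: the sign of $g$ relative to $\e$ agrees with the sign of $gh$ for every $h\in H$. Writing $P=\{g:g>\e\}$, I would show $PH\subseteq P\cup H^{c}$ and then use connectivity. Specifically, $I(G)(\e)=\langle\inc(\e)\rangle=H$ by \cref{thm:I-normal}, and that theorem already states that $G$ is the lexicographical sum of copies of $I(G)(\e)$ indexed by a chain. So $H$ is a module, and I expect to invoke \cref{thm:I-normal} directly here rather than argue by hand; this is the main step.

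For (iv)$\Rightarrow$(i), $H$ is a module, hence convex by \cref{lem:1}. Its cosets $gH$ are modules by translation. Within a coset, distinct elements are incomparable: if $x,y\in gH$ are distinct, then $g^{-1}x$ and $g^{-1}y$ are distinct elements of $H$, and I need them incomparable, which requires showing $\inc(\e)$ is an antichain. For $x,y\in\inc(\e)$ with $x<y$, convexity is not enough on its own. Instead, apply the module property of $H$ to any $z\notin H$ comparable to $\e$: $z$ relates uniformly to all of $H$. Using $x<y$, the element $x^{-1}y>\e$ must lie outside $H$, so $x^{-1}y\notin H$. But then $y\notin xH$. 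Meanwhile $xH\ni\e$ only if $x\in H$, which holds, so $xH=H$ when $H$ is a subgroup; this gives the contradiction. Since (iv) alone does not give that $H$ is a subgroup, I would first show that a module $H$ containing $\e$ and $\inc(\e)$ equals $I(G)(\e)$, using connectivity of $\Inc$ and the fact that $\Inc$-neighbours of $H$ outside $H$ cannot exist because $z$ outside $H$ is comparable to $\e$ and hence, by the module property, to all of $H$. Then $H=\langle\inc(\e)\rangle$ is a subgroup, and the cosets form the blocks of a lexicographical sum over the chain $G/H$ by \cref{thm:I-normal}. Each block is an antichain by the argument above, so the order is a weak order.

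Finally, normality, total order of $G/H$, and the lexicographical-sum description all follow from $H=I(G)(\e)$ and \cref{thm:I-normal}. The expected obstacle is identifying $H$ with $I(G)(\e)$ in steps (iii) and (iv).
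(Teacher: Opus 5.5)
The paper does not prove this proposition; it quotes it from Pouzet--Zaguia. There is therefore no in-paper argument to compare against, and your proposal has to stand on its own. It does, once the exploratory material is removed.

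\begin{itemize}
\item (i)$\Rightarrow$(ii) and (ii)$\Rightarrow$(iii) are complete as written.
\item (iii)$\Rightarrow$(iv): the subgroup $H$ equals $\langle\inc(\e)\rangle=I(G)(\e)$, which is a block of the lexicographical sum in \cref{thm:I-normal}, hence a module.
\item (iv)$\Rightarrow$(i): your observation that no edge of $\Inc(G)$ can leave the module $H$ is correct, because a point $z\notin H$ is comparable with $\e$ and hence with all of $H$. This gives $H=I(G)(\e)$. \cref{thm:I-normal} then makes $H$ a subgroup and the cosets a chain. Finally, $x<y$ in $\inc(\e)$ would force $\e<x^{-1}y\in H$, which is impossible.
\end{itemize}

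When writing this up, make two changes. Delete the abandoned attempts in (iii)$\Rightarrow$(iv), namely the unproved ``closed under multiplication'' claim and the $PH$ idea. In (iv)$\Rightarrow$(i), prove $H=I(G)(\e)$ before the contradiction that uses $xH=H$, since that step presupposes the subgroup property. A by-product of your route is that (i)--(iv) are also equivalent to $I(G)(\e)=\inc(\e)\cup\{\e\}$, which ties the proposition to the paper's graph-kernel discussion.

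The cost of your route is that both substantive steps are delegated to \cref{thm:I-normal}. That is a strictly stronger cited result which already contains most of the conclusion. Using it is legitimate here, but both steps have two-line proofs from translation invariance alone.

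\begin{itemize}
\item For (iii)$\Rightarrow$(iv): if $z\notin H$ and $h\in H$, then $h^{-1}z\notin H$, so $z$ is comparable with every element of $H$. If $h_1<z<h_2$ with $h_1,h_2\in H$, then $\e<h_1^{-1}h_2\in H$, which is impossible. So $z$ relates to all of $H$ uniformly.
\item For (iv)$\Rightarrow$(ii): if $x<y$ in $\inc(\e)$, then $z:=x^{-1}y>\e$ lies outside $H$. Since $x^{-1}\in H$, the module property gives $x^{-1}<x^{-1}y$, i.e.\ $\e<y$, a contradiction.
\end{itemize}

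From (ii)$\Leftrightarrow$(iii)$\Leftrightarrow$(iv), $H$ is an antichain, a subgroup and a module. It is closed under conjugation, hence normal, and convex by \cref{lem:1}. Elements of distinct cosets are comparable, because $x^{-1}y\notin H$. Then \cref{convex} exhibits $G$ as the lexicographical sum of the antichains $gH$ over the chain $G/H$. This gives (i) and all the final assertions with no appeal to the connected-component theorem.
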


For example, if \(F_r\) is a free group of rank \(r\ge2\) and
\(\varphi:F_r\to\mathbb Z\) is an epimorphism, order the
\(\ker\varphi\)-cosets according to the usual order on \(\mathbb Z\) and
make each coset an antichain.  This gives a compatible weak order on
\(F_r\) which is neither total nor equality.
\end{example}

\begin{example}[An \(N\)-free order which is not a weak order on \(F_r\)]
Let \(F_r\) be a free group of rank \(r\geq 2\).  Since \(F_r\) is
bi-orderable \cite{Glass1999}, fix a bi-invariant total order \(\leq\) on
\(F_r\).

Choose a proper nontrivial normal subgroup \(H\triangleleft F_r\).  For
example, let
\[
\varphi:F_r\longrightarrow \mathbb Z_2
\]
be an epimorphism and put
\[
H:=\ker\varphi.
\]

Define a partial order \(\preceq\) on \(F_r\) by
\[
x\preceq y
\quad\Longleftrightarrow\quad
xH=yH
\ \text{ and }\
x\leq y.
\]
Thus each coset of \(H\) is totally ordered by the restriction of
\(\leq\), while elements belonging to distinct cosets of \(H\) are
incomparable.

The order \(\preceq\) is compatible with the group operation.  Indeed, if
\(x\preceq y\), then \(xH=yH\) and \(x\leq y\).  Hence, for every
\(a,b\in F_r\),
\[
axb\,H=ayb\,H,
\]
because \(H\triangleleft F_r\), and
\[
axb\leq ayb
\]
because \(\leq\) is bi-invariant.  Therefore
\[
axb\preceq ayb.
\]

The poset \((F_r,\preceq)\) is \(N\)-free.  Its comparability graph is
the disjoint union of complete graphs induced by the cosets of \(H\),
and hence contains no induced \(P_4\).

On the other hand, \((F_r,\preceq)\) is not a weak order.  Choose
\(h\in H\setminus\{\e\}\) with
\[
\e<h
\]
in the fixed bi-order, and choose \(g\notin H\).  Then
\[
\e\prec h,
\qquad
g\parallel \e,
\qquad
g\parallel h.
\]
Thus incomparability is not transitive, so \((F_r,\preceq)\) is not a
weak order.

Equivalently, the order is the lexicographical sum
\[
F_r
=
\sum_{F_r/H}^{\mathrm{equality}} H,
\]
where each copy of \(H\) carries the restricted total order.
\end{example}

\section*{Declaration of generative AI and AI-assisted technologies in the writing process}
During the preparation of this work, the author used ChatGPT (OpenAI)
to assist with language editing, organization, and the presentation of
mathematical arguments.  The author independently verified all
mathematical statements, proofs, references, and conclusions, reviewed
and edited the resulting text as needed, and takes full responsibility
for the content of the article.

\end{document}